\newtheoremstyle{pl}
{3pt}
{3pt}
{\itshape}
{}
{\scshape}
{.}
{.5em}
{}
\newtheoremstyle{pl*}
{3pt}
{3pt}
{\itshape}
{}
{\bfseries}
{.}
{.5em}
{}
\newtheoremstyle{mythm}
{3pt}
{3pt}
{\itshape}
{}
{\bfseries}
{.}
{.5em}
{\thmnote{#1 }#3}
\newtheoremstyle{df}
{3pt}
{3pt}
{\normalfont}
{}
{\scshape}
{.}
{.5em}
{}
\newtheoremstyle{rm}
{3pt}
{3pt}
{\normalfont}
{}
{\scshape}
{.}
{.5em}
{}
\theoremstyle{pl}
\newtheorem{thm}{Theorem}[section]
\newtheorem{lem}[thm]{Lemma}			
\newtheorem{cor}[thm]{Corollary}
\newtheorem{pro}[thm]{Proposition}
\newtheorem{qn}[thm]{Question}
\newtheorem*{qn*}{Question}
\theoremstyle{pl*}
\newtheorem{thm*}{Theorem}
\newtheorem{pro*}[thm*]{Proposition}
\theoremstyle{mythm}
\newtheorem*{mythm}{Theorem}
\newtheorem*{mypro}{Proposition}
\theoremstyle{df}
\newtheorem*{dfn}{Definition}
\theoremstyle{rm}
\newcommand{\ep}{
\epsilon
}
\newcommand{\pa}[1]{
\left(#1\right)
}
\newcommand{\qa}[1]{
\left[#1\right]
}
\newcommand{\ga}[1]{
\left\{#1\right\}
}
\newcommand{\mc}[1]{
\mathcal{#1}
}
\newcommand{\mb}[1]{
\mathbb{#1}
}
\newcommand{\T}{
\mc{T}
}
\newcommand{\mcg}{
\text{\rm Mod}\pa{\Sigma}
}
\newcommand{\psl}{
\text{PSL}_2\pa{\mathbb{C}}
}
\newcommand{\vol}[1]{
\text{\rm vol}\pa{#1}
}
\begin{document}

\title{Volumes of random 3-manifolds}
\author{Gabriele Viaggi}
\thanks{AMS subject classification: 57M27, 30F60, 20P05}
\date{July 29, 2019}

\begin{abstract}
We prove a law of large numbers for the volumes of families of random hyperbolic mapping tori and Heegaard splittings providing a sharp answer to a conjecture of Dunfield and Thurston.
\end{abstract}

\maketitle

\section{Introduction}
Every orientation preserving diffeomorphism $f\in\text{Diff}^+(\Sigma)$ of a closed orientable surface $\Sigma=\Sigma_g$ of genus $g\ge 2$ can be used to define 3-manifolds in two natural ways: We can construct the {\em mapping torus}
\[
T_f:=\Sigma\times\qa{0,1}/\pa{x,0}\sim\pa{f(x),1},
\]
and we can form the {\em Heegaard splitting}
\[
M_f:=H_g\cup_{f:\partial H_g\rightarrow\partial H_g}H_g.
\]
The latter is obtained by gluing together two copies of the handlebody $H_g$ of genus $g$ along the boundary $\partial H_g=\Sigma$. In both cases the diffeomorphism type of the 3-manifold only depends on the isotopy class of $f$, which means that it is well-defined for the \emph{mapping class} $[f]\in\mcg:=\text{Diff}^+(\Sigma)/\text{Diff}^+_0(\Sigma)$ in the \emph{mapping class group}. We use $X_f$ to denote either $T_f$ or $M_f$.

Invariants of the 3-manifold $X_f$ give rise to well-defined invariants of the mapping class $[f]$. For example, if $X_f$ supports a {\em hyperbolic metric}, then we can use the geometry to define invariants of $[f]$: By Mostow rigidity, if such hyperbolic metric exists, then it is unique up to isometry. 

After Perelman's solution of Thurston's geometrization conjecture, the only obstruction to the existence of a hyperbolic metric on $X_f$ can be phrased in topological terms: A closed orientable 3-manifold is hyperbolic if and only if it is irreducible and atoroidal. Mapping classes that are sufficiently complicated in an appropriate sense (see Thurston \cite{Th} and Hempel \cite{He01}) give rise to manifolds that satisfy these properties. 

For a closed hyperbolic 3-manifold $X_f$, a good measure of its complexity is provided by the {\em volume} $\text{\rm vol}(X_f)$. According to a celebrated theorem by Gromov and Thurston, it equals a universal multiple of the {\em simplicial volume} of $X_f$, a topologically defined invariant (see for example Chapter C of \cite{BP92}). As $X_f$ is not always hyperbolic, in general we define $\text{\rm vol}(X_f)$ to be its simplicial volume, a quantity that always makes sense. 

The purpose of this article is to study the \emph{growth} of the volume for families of {\em random 3-manifolds} or, equivalently, {\em random mapping classes}.

A random mapping class is the result of a {\em random walk} generated by a probability measure on the mapping class group, and a random 3-manifold is one of the form $X_f$ where $f$ is a random mapping class. Such notion of random 3-manifolds has been introduced in the foundational work by Dunfield and Thurston \cite{DT06}. They conjectured that a random 3-manifold is hyperbolic and that its volume grows linearly with the step length of the random walk (Conjecture 2.11 of \cite{DT06}). 

The existence of a hyperbolic metric has been settled by Maher for both mapping tori \cite{Ma} and Heegaard splittings \cite{Ma10}. 

Here we answer to Dunfield and Thurston volume conjecture interpreting it in a strict way (see also Conjecture 9.2 in Rivin \cite{Riv}). Our main result is the following \emph{law of large numbers}: Let $\mu$ be a probability measure on $\mcg$ whose support is a finite symmetric generating set. Let $\omega=(\omega_n)_{n\in\mb{N}}$ be the associated random walk

\begin{thm*}
\label{main1}
There exists $v=v(\mu)>0$ such that for almost every $\omega=\pa{\omega_n}_{n\in\mb{N}}$ the following holds
\[
\lim_{n\rightarrow\infty}\frac{\text{\emph{vol}}\pa{X_{\omega_n}}}{n}=v.
\]
Here $\pa{X_{\omega_n}}_{n\in\mb{N}}$ is either the family of mapping tori or Heegaard splittings.
\end{thm*}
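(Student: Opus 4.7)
The plan is to combine three ingredients: the known hyperbolicity of random 3-manifolds, coarse geometric formulas for the volume of $X_f$ in terms of combinatorial data of $f$, and an ergodic theorem for the random walk on $\mcg$ acting on a suitable metric complex. The output will be a single deterministic constant $v=v(\mu)$ with strictly positive value.

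The first step is a reduction to the hyperbolic case. By the theorems of Maher quoted in the excerpt, for almost every sample path $\omega$ the manifold $X_{\omega_n}$ is hyperbolic for all sufficiently large $n$, and by Gromov--Thurston the simplicial volume is then a universal multiple of the hyperbolic volume, so it suffices to analyze the latter. I would then invoke a coarse volume--length comparison: for mapping tori $T_f$ with $f$ pseudo-Anosov, Brock's theorem gives $\vol{T_f}\asymp \ell_{\mathcal P}(f)$, where $\ell_{\mathcal P}$ denotes the asymptotic translation length in the pants graph $\mathcal P(\Sigma)$; for Heegaard splittings $M_f$ with sufficiently complicated $f$, the Brock--Minsky--Namazi--Souto model yields the analogous coarse formula $\vol{M_f}\asymp d_{\mathcal P}(\mathcal D_g, f\cdot \mathcal D_g)$, where $\mathcal D_g$ is the disk set of the handlebody $H_g$ inside $\mathcal P(\Sigma)$.

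The second step is a law of large numbers for the combinatorial side. The action of $\mcg$ on the pants graph (or equivalently on Teichm\"uller space with the Weil--Petersson metric) is by isometries, and the random walk driven by $\mu$ has positive drift by the work of Maher--Tiozzo and Horbez. Karlsson--Ledrappier-type arguments using Kingman's subadditive ergodic theorem then upgrade this to $d_{\mathcal P}(\mu_0,\omega_n\mu_0)/n\to L > 0$ and $\ell_{\mathcal P}(\omega_n)/n\to L$ almost surely, for the same drift constant $L$ depending only on $\mu$.

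The main obstacle, and the content of the third step, is that the coarse comparison above gives only bi-Lipschitz bounds on $\vol{X_{\omega_n}}/n$ in terms of $L$, not convergence to a specific number. To extract a genuine limit $v$, I would build a refined geometric model of $X_{\omega_n}$ as a concatenation of elementary model blocks indexed by the intermediate positions $\omega_0,\omega_1,\ldots,\omega_n$ of the walk, and show that the volume of the model decomposes as an additive cocycle along the shift:
\[
\vol{X_{\omega_n}}=\sum_{k=0}^{n-1}\phi\pa{\sigma^k\omega}+o(n),
\]
for some integrable function $\phi$ on the sample space that depends only on finitely many consecutive increments. Birkhoff's ergodic theorem for the ergodic Bernoulli shift $\sigma$ on the space of step sequences then produces $v=\mathbb E_\mu[\phi]$, with $v>0$ following from the lower bound in the coarse volume formula combined with $L>0$. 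The delicate point, both for mapping tori and Heegaard splittings, is to control the error terms arising from non-locality of the volume and from sub-linear excursions of the walk into the thin part of moduli space.
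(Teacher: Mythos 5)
Your overall architecture (hyperbolicity via Maher, a block decomposition indexed by the walk, an ergodic sum, Birkhoff) is the same heuristic the paper follows, but the step that carries all the content is asserted rather than proved, and in the form you state it it is stronger than what is actually true or needed. The claim $\vol{X_{\omega_n}}=\sum_{k<n}\phi(\sigma^k\omega)+o(n)$ for a single integrable $\phi$ depending on a fixed finite window is precisely the difficulty: volume is not a local quantity along the walk, and the paper never produces such an exact cocycle. What it proves instead (Proposition \ref{approximation}) is a two-parameter approximation: for every $\ep,\zeta>0$ there is a block length $N(\ep,\zeta)$ and a set of measure at least $1-\zeta$ on which $\vol{Q(o,\omega_{nN}o)}$ differs from the ergodic sum $\sum_{j<n}\vol{Q(o,(\sigma^{jN}\omega)_N o)}$ by at most $\mathrm{const}\cdot\ep nN$; convergence is then extracted by a limsup/liminf argument (Lemma \ref{real numbers}), not by one application of Birkhoff to a fixed $\phi$. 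Your proposal offers no mechanism for constructing $\phi$ or bounding the error; saying the delicate point is ``to control the error terms arising from non-locality of the volume and from excursions into the thin part'' names the theorem rather than proving it. Note also that your steps 1--2 (pants-graph coarse formulas and the drift LLN) only yield the coarse linear behaviour and positivity, which the paper points out were already known; the actual route to the exact constant passes through the quasi-fuchsian statement (Theorem \ref{qf tracking}), with both $T_{\omega_n}$ and $M_{\omega_n}$ reduced to $\vol{Q(o,\omega_no)}$ up to $o(n)$.

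The second concrete gap is that you give no way to compare the volume of your concatenated model with the hyperbolic volume of $X_{\omega_n}$. The paper needs, and uses, a specific chain of tools for this: the Minsky--Brock--Canary--Minsky model producing $\xi$-almost isometric product regions wherever the associated Teichm\"uller geodesics fellow-travel in the thick part (Proposition \ref{minsky model}); the Baik--Gekhtman--Hamenst\"adt recurrence and fellow-traveling statements guaranteeing such thick overlaps occur along a random ray and along the axis of $\omega_n$ (Theorem \ref{recurrence}); the cut-and-glue construction keeping the sectional curvature of the glued model pinched near $-1$ with uniformly bounded volume loss at the seams (Lemma \ref{cut and glue}, Proposition \ref{gluing}); and, crucially, the Besson--Courtois--Gallot natural-map comparison (Theorem \ref{bcg}) to transfer the model volume to the genuine hyperbolic metric on the mapping torus, together with the Hamenst\"adt--Viaggi model for the Heegaard case, whose middle piece embeds isometrically in a quasi-fuchsian convex core with handlebody pieces of volume $o(n)$. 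Without a BCG-type (or inflexibility) argument, knowing the volume of a negatively curved model tells you nothing exact about $\vol{X_{\omega_n}}$, and the additive decomposition you want cannot even be formulated for the hyperbolic metric itself. Supplying these ingredients is essentially reconstructing Sections \ref{gluing and volume}--\ref{lln volume} of the paper.
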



We observe that the asymptotic is the same for both mapping tori and Heegaard splittings. We also remark that the important part is the existence of an {\em exact asymptotic} for the volume as the {\em coarsely linear} behaviour follows from previous work. In the case of mapping tori, it is a consequence of work of Brock \cite{Br}, who proved that there exists a constant $c(g)>0$ such that for every pseudo-Anosov $f$
\[
\frac{1}{c(g)}d_{\text{\rm WP}}(f)\le \vol{T_f}\le c(g)d_{\text{\rm WP}}(f)
\]
where $d_{\text{\rm WP}}(f)$ is the Weil-Petersson translation length of $f$, and the theory of random walks on weakly hyperbolic groups (see for example \cite{MT14}) which provides a linear asymptotic for $d_{\text{\rm WP}}(f)$. 

The coarsely linear behaviour for the volume of a random Heegaard splitting follows from results by Maher \cite{Ma10} combined with an unpublished work of Brock and Souto. We refer to the introduction of \cite{Ma10} for more details. 

Theorem \ref{main1} will be derived from the more technical Theorem \ref{qf tracking} concerning {\em quasi-fuchsian} manifolds. We recall that a quasi-fuchsian manifold is a hyperbolic 3-manifold $Q$ homeomorphic to $\Sigma\times\mb{R}$ that has a {\em compact} subset, the {\em convex core} $\mc{CC}(Q)\subset Q$, that contains all geodesics of $Q$ joining two of its points. The asymptotic geometry of $Q$ is captured by two conformal classes on $\Sigma$, i.e. two points in the Teichmüller space $\T=\T(\Sigma)$. Bers \cite{Bers60} showed that for every ordered pair $X,Y\in\T$ there exists a unique quasi-fuchsian manifold, which we denote by $Q(X,Y)$, realizing those asymptotic data.   

\begin{thm*}
\label{qf tracking}
There exists $v=v(\mu)>0$ such that for every $o\in\T$ and for almost every $\omega=\pa{\omega_n}_{n\in\mb{N}}$ the following limit exists:
\[
\lim_{n\rightarrow\infty}{\frac{\vol{\mc{CC}(Q(o,\omega_no))}}{n}}=v.
\]
\end{thm*}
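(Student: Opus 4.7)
My plan is to apply Kingman's subadditive ergodic theorem to the cocycle
\[
a_n(\omega) := \vol{\mc{CC}(Q(o,\omega_no))}
\]
on the one-sided Bernoulli shift $(\Omega,T,\mb{P})$, where $\Omega=\mcg^{\mb{N}}$, $T$ is the shift, and $\mb{P}=\mu^{\otimes\mb{N}}$. Since $\mu$ has finite support, the integrability of $a_1$ is automatic, and the Bernoulli shift is ergodic. Therefore, once one verifies that $a_n$ is subadditive up to a uniform additive error, Kingman's theorem will produce an almost sure and $L^1$ limit $a_n/n\rightarrow v$ with $v$ deterministic.

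Using the cocycle identity $\omega_{n+m}(\omega)=\omega_n(\omega)\cdot\omega_m(T^n\omega)$ together with the $\mcg$-invariance of convex core volume (the mapping class group acts equivariantly on Bers pairs, and $Q(fX,fY)$ is isometric to $Q(X,Y)$ for every $f\in\mcg$), the required subadditivity reduces to an approximate triangle inequality
\[
\vol{\mc{CC}(Q(X,Z))}\le\vol{\mc{CC}(Q(X,Y))}+\vol{\mc{CC}(Q(Y,Z))}+C
\]
valid for all $X,Y,Z\in\T$ with a constant $C=C(g)$ depending only on the genus. Applying this with $X=o$, $Y=\omega_no$, $Z=\omega_{n+m}o$ and translating the second summand back to the basepoint $o$ by $\omega_n(\omega)^{-1}$ yields $a_{n+m}(\omega)\le a_n(\omega)+a_m(T^n\omega)+C$.

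The additive triangle inequality is, I expect, the technical heart of the proof. Brock's bilipschitz comparison $\vol{\mc{CC}(Q(X,Y))}\asymp d_{\text{WP}}(X,Y)$ is only multiplicative, so the triangle inequality for $d_{\text{WP}}$ cannot be transferred directly. A natural route is Minsky's bilipschitz model for Kleinian surface groups: concatenate a hierarchy path from $X$ to $Y$ with one from $Y$ to $Z$, assemble an approximate model for $Q(X,Z)$ by gluing the two model manifolds along a uniformly bounded neck near the surface fibering over $Y$, and compare volumes block by block, absorbing the cost of the neck and of straightening the concatenated hierarchy into the additive error $C(g)$. The Brock--Bromberg drilling/filling estimates, which quantify geometric inflexibility of the convex core under changes of end invariants, are the natural tool for bounding the error contributed by the gluing region.

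With the triangle inequality in hand, Kingman delivers $a_n/n\rightarrow v$ almost surely for a constant $v\ge 0$. Positivity of $v$ follows by combining Brock's lower bound with the linear positive drift $d_{\text{WP}}(o,\omega_no)/n\rightarrow L>0$ known for finitely supported random walks on $\mcg$, obtained from Maher--Tiozzo type positivity of drift in the curve graph composed with Brock's quasi-isometry between the pants graph and the Weil--Petersson metric. The main obstacle is therefore the additive triangle inequality; should it resist a direct model-manifold argument, a fallback strategy would be to decompose $\mc{CC}(Q(o,\omega_no))$ directly into blocks indexed by the individual steps $\omega_{k-1}o\to\omega_ko$ of the random walk, exploit the stationarity of the block combinatorics under $T$, and apply Birkhoff's ergodic theorem to the per-block volume contributions.
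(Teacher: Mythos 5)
Your reduction to Kingman is clean as far as it goes (the cocycle identity, $\mcg$-equivariance of $Q(\cdot,\cdot)$, integrability from the finite support, and positivity of $v$ via Brock's coarse comparison plus positive Weil--Petersson drift are all fine), but the whole proof rests on the approximate additive triangle inequality $\vol{\mc{CC}(Q(X,Z))}\le\vol{\mc{CC}(Q(X,Y))}+\vol{\mc{CC}(Q(Y,Z))}+C(g)$ for \emph{arbitrary} $X,Y,Z\in\T$, and this is precisely where there is a genuine gap. No such statement is available in the literature, and what the known comparisons give is strictly weaker: Brock's theorem and the Kojima--McShane/Schlenker Lipschitz estimate only yield $\vol{\mc{CC}(Q(X,Z))}\le K\bigl(\vol{\mc{CC}(Q(X,Y))}+\vol{\mc{CC}(Q(Y,Z))}\bigr)+K$ with a multiplicative constant $K>1$, which destroys subadditivity and hence Kingman. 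Promoting the multiplicative constant to $1$ is exactly the ``exact versus coarse'' difficulty this problem is about, since the ratio of convex-core volume to Weil--Petersson distance genuinely fluctuates; your sketch does not resolve it. Concretely, gluing the models of $Q(X,Y)$ and $Q(Y,Z)$ along a ``bounded neck fibering over $Y$'' presupposes that $Y$ sits, with uniformly bounded geometry, on the data describing $\mc{CC}(Q(X,Z))$; this fails when $Y$ is far from the Teichm\"uller geodesic (or hierarchy) joining $X$ to $Z$, and even when $Y$ lies on it the almost-isometric product-region technology needed to cut and compare with only an additive error is available only under thickness and long fellow-traveling hypotheses, so the constant $C$ is not uniform over thin $Y$. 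The Brock--Bromberg inflexibility estimates likewise require geometric control you have not verified.

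This is also why the paper does not argue via Kingman: it obtains additive control only for carefully chosen comparison points, using the Baik--Gekhtman--Hamenst\"adt recurrence to the thick part $\T_\eta$ of the tracking ray (a probabilistic input with no deterministic analogue for arbitrary triples), a cut-and-glue construction along almost-isometric product regions of fixed size, and the Besson--Courtois--Gallot volume comparison, and then sums the resulting blocks of a large fixed length $N$ by Birkhoff, finally removing the speed-up by the Lipschitz neighbour estimate. Your fallback suggestion (blocks indexed by single steps of the walk plus Birkhoff) runs into the same obstruction: individual steps carry no thickness or fellow-traveling control, so the per-block volume comparisons again lose multiplicative constants. To salvage your approach you would either have to prove the additive triangle inequality in full generality (a substantial new result, not a routine model-manifold gluing), or restrict it to triples produced by the random walk with thick, long fellow-traveling transitions --- at which point you have essentially reconstructed the paper's argument.
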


We remark that $v(\mu)$ is the same as in Theorem \ref{main1}. Once again, the coarsely linear behaviour of the quantity in Theorem \ref{qf tracking} was known before: The technology developed around the solution of the ending lamination conjecture by Minsky \cite{M10} and Brock-Canary-Minsky \cite{BCM}, with fundamental contributions by Masur-Minsky \cite{MM99}, \cite{MM00}, gives a combinatorial description of the internal geometry of the convex core of a quasi-fuchsian manifold. This combinatorial picture is a key ingredient in Brock's proof \cite{Br03} of the following coarse estimate: There exists a constant $k(g)>0$ such that
\[
\frac{1}{k(g)}d_{\text{\rm WP}}(X,Y)-k(g)\le\vol{\mc{CC}(Q(X,Y))}\le k(g)d_{\text{\rm WP}}(X,Y)+k(g).
\]
This link between volumes of hyperbolic 3-manifolds and the Weil-Petersson geometry of Teichmüller space, as in the case of random mapping tori, leads to the coarsely linear behaviour for the volume of the convex cores of $Q(o,\omega_no)$, but does not give, by itself, a law of large numbers. The main novelty in this paper is that we work directly with the geometry of the quasi-fuchsian manifolds rather than their combinatorial counterparts which allows us to get exact asymptotics rather than coarse ones.

The relation between Theorem \ref{main1} and Theorem \ref{qf tracking} is provided by a {\em model manifold} construction similar to Namazi \cite{Na05}, Namazi-Souto \cite{NS09}, Brock-Minsky-Namazi-Souto \cite{BMNS16}. In the case of random 3-manifolds the heuristic picture is the following: The geometry of $X_{\omega_n}$ largely resembles the geometry of the convex core of $Q(o,\omega_no)$, more precisely, as far as the volume is concerned, we have
\[
|\vol{X_{\omega_n}}-\vol{\mc{CC}(Q(o,\omega_no))}|=o(n).
\]

We now describe the basic ideas behind Theorem \ref{qf tracking}: Suppose that the support of $\mu$ equals a finite generating set $S$ and consider $f=s_1\dots s_n$, a long random word in the generators $s_i\in S$. It corresponds to a quasi-fuchsian manifold $Q(o,fo)$. Fix $N$ large, and assume $n=Nm$ for simplicity. We can split $f$ into smaller blocks of size $N$ 
\[
f=(s_1\dots s_N)\cdots(s_{N(m-1)+1}\dots s_{Nm})
\]
which we also denote by $f_j:=s_{jN+1}\cdots s_{(j+1)N}$. Each block corresponds to a quasi-fuchsian manifold $Q(o,f_jo)$ as well. The main idea is that the geometry of the convex core $\mc{CC}(Q(o,fo))$ can be roughly described by juxtaposing, one after the other, the convex cores of the single blocks $\mc{CC}(Q(o,f_jo))$. In particular, the volume $\text{\rm vol}(\mc{CC}(Q(o,fo)))$ can be well approximated by the {\em ergodic sum} 
\[
\sum_{1\le j\le m}{\vol{\mc{CC}(Q(o,f_jo))}}
\]
which converges in average by the Birkhoff ergodic theorem. 

In the paper, we will make this heuristic picture more accurate. Our three main ingredients are the model manifold, bridging between the geometry of the Teichmüller space $\T$ and the internal geometry of quasi-fuchsian manifolds \cite{M10},\cite{BCM}, a recurrence property for random walks \cite{BGH16} and the method of natural maps from Besson-Courtois-Gallot \cite{BCG98}.  They correspond respectively to Proposition \ref{gluing}, Proposition \ref{recurrence} and Proposition \ref{bcg}. Proposition \ref{gluing} and Proposition \ref{recurrence} are used to construct a geometric object, i.e. a negatively curved model for $T_f$, associated to the ergodic sum written above. Proposition \ref{bcg} let us compare this model to the underlying hyperbolic structure.

As an application of the same techniques, along the way, we give another proof of the following well-known result \cite{KoMc}, \cite{BB} relating iterations of pseudo-Anosovs, volumes of quasi-fuchsian manifolds and mapping tori

\begin{pro*}
\label{iteration}
Let $\phi$ be a pseudo-Anosov mapping class. For every $o\in\T$ the following holds:
\[
\lim_{n\rightarrow\infty}{\frac{\vol{\mc{CC}(Q(o,\phi^no))}}{n}}=\vol{T_\phi}.
\]
\end{pro*}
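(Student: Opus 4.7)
The strategy is to follow the same scheme as for Theorem \ref{qf tracking}, using Proposition \ref{gluing} and Proposition \ref{bcg}, but replacing the ergodic input of Proposition \ref{recurrence} by the deterministic periodicity of pseudo-Anosov iteration. Since $\phi$ is pseudo-Anosov, Teichmüller theory provides a $\phi$-invariant axis $\alpha\subset\T$ along which $\phi$ acts by translation of some length $\tau>0$. For every base point $o\in\T$, the Teichmüller geodesic from $o$ to $\phi^no$ asymptotically fellow-travels $\alpha$: it splits into two end arcs of length depending only on $d_\T(o,\alpha)$, joined by a central segment that shadows $\alpha$ over a length $n\tau+O(1)$.

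First I would feed this picture into Proposition \ref{gluing} to produce a negatively curved model manifold $\mc{M}_n$ for $\mc{CC}(Q(o,\phi^no))$. By $\phi$-equivariance, the central portion of $\mc{M}_n$ is bilipschitz equivalent to $n$ consecutive fundamental domains of the infinite cyclic cover of the mapping torus $T_\phi$, which is hyperbolic by Thurston's theorem for fibered 3-manifolds. The two end caps depend only on $o$ and $\alpha$, so they contribute an $O(1)$ additive error, yielding
\[
\vol{\mc{M}_n}=n\cdot\vol{T_\phi}+O(1).
\]

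Then I would apply Proposition \ref{bcg} (the Besson-Courtois-Gallot natural map technology) to compare the volumes of $\mc{M}_n$ and of the actual hyperbolic convex core, obtaining
\[
\left|\vol{\mc{CC}(Q(o,\phi^no))}-\vol{\mc{M}_n}\right|=o(n).
\]
Dividing by $n$ and letting $n\to\infty$ gives the desired limit $\vol{T_\phi}$. Note that Proposition \ref{recurrence} is not needed here, because pseudo-Anosov iteration automatically produces a Teichmüller trajectory that spends all of its time near the thick axis $\alpha$, which is exactly the scenario Proposition \ref{recurrence} has to engineer in the random setting.

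The main obstacle is the clean identification of the middle of the model with a fundamental segment of the cyclic cover of $T_\phi$: the gluing construction supplies a model with a controlled bilipschitz class, and one must verify that the geometry along the shadowed axis agrees, up to the universal constants implicit in Proposition \ref{bcg}, with the hyperbolic geometry of the cyclic cover. This is essentially the model theorem of Minsky-Brock-Canary-Minsky applied to the cyclic cover of $T_\phi$, together with the fact that its pair of ending laminations coincides with the stable-unstable foliation pair of $\phi$; once this translation between the combinatorial and the geometric pictures is available, the rest of the argument is quantitative bookkeeping.
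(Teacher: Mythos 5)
There is a genuine gap, and it lies in the two steps you yourself flag as delicate. First, Theorem \ref{bcg} as available in this paper is a statement about \emph{closed} orientable manifolds and a degree inequality; you propose to apply it to compare $\mc{CC}(Q(o,\phi^no))$ with a model $\mc{M}_n$ of the convex core, i.e.\ to two compact manifolds with boundary, and nothing in the paper's toolkit licenses that (the natural-map method needs extra boundary control in that setting). The paper sidesteps exactly this by running the comparison on a closed manifold: it builds, via Proposition \ref{gluing}, a negatively curved model of the \emph{mapping torus} $T_{\phi^n}$ out of the single large quasi-fuchsian block $Q(l_\phi(0),l_\phi(nL(\phi)))$ plus one small closing block, applies Corollary \ref{special case} there, and then uses $\vol{T_{\phi^n}}=n\vol{T_\phi}$ (an $n$-sheeted cover) to produce the factor $n$; this is Corollary \ref{qf vs mt} applied to $\phi^n$, and the general basepoint $o$ is handled at the very end by Proposition \ref{replacement}, which bounds $\left|\vol{Q(o,\phi^no)}-\vol{Q(o_1,\phi^no_1)}\right|$ by $2\kappa d_\T(o,o_1)+\kappa$. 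Second, your identification of the central portion of $\mc{M}_n$ with $n$ fundamental domains of the infinite cyclic cover of $T_\phi$ is not supplied by Proposition \ref{minsky model}, which compares two \emph{quasi-fuchsian} manifolds whose Teichm\"uller geodesics fellow-travel in the thick part; comparing a quasi-fuchsian convex core with the doubly degenerate cyclic cover would require an additional ingredient (geometric limits/inflexibility as in Brock--Bromberg, or the bi-Lipschitz model for degenerate ends), which is precisely the external input the paper's rerouting through $T_{\phi^n}$ avoids.

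Even granting such an identification, your quantitative claim $\vol{\mc{M}_n}=n\vol{T_\phi}+O(1)$ overreaches: an almost-isometric ($\xi$-close or $(1+\xi)$-bilipschitz) identification of the middle portion yields a multiplicative error, hence an additive error of size $\xi\cdot O(n)$, not $O(1)$; the $O(1)$ statement is of the strength of \cite{BB}, \cite{KoMc} and needs inflexibility. This is not fatal to the limit itself — as in the paper one only needs an error of the form $\xi\kappa nL(\phi)+\text{const}$ and then lets $\xi\to0$ after $n\to\infty$ — but as written your bookkeeping asserts more than the construction gives. In summary, the overall spirit (axis thickness replacing recurrence, gluing, BCG) matches the paper, but the architecture must be inverted: model the closed mapping torus of $\phi^n$ by quasi-fuchsian pieces and compare there, rather than modelling the quasi-fuchsian manifold by pieces of the cyclic cover and comparing on a manifold with boundary.
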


\subsection*{Outline}
The paper is organized as follows. 

In Section \ref{quasi-fuchsian} we introduce quasi-fuchsian manifolds. They are the building blocks for the cut-and-glue construction of Section \ref{gluing and volume}. We prove that, under suitable assumptions, we can glue together a family of quasi-fuchsian manifolds in a geometrically controlled way. The geometric control on the glued manifold is good enough for the application of volume comparison results. 

As an application of the cut-and-glue construction we show that the volume of a random gluing is essentially the volume of a quasi-fuchsian manifold (Proposition \ref{iteration} follows from this fact). As a consequence, in Section \ref{lln volume}, we deduce Theorem \ref{main1} from Theorem \ref{qf tracking} whose proof is carried out shortly after. 

In Section \ref{random walks} we discuss random walks on the mapping class group and on Teichmüller space. The goal is to describe the picture of a random Teichmüller ray and state the main recurrence property.  

In the last section, Section \ref{questions}, we formulate some questions related to the study of {\em growth in random families} of 3-manifolds.

\subsection*{Acknowledgements} I want to thank Giulio Tiozzo for discussing the problem this article is about with me. This work might have never been completed without many discussions with Ursula Hamenstädt. This paper is very much indebted to her. Finally, I thank Joseph Maher for spotting an imprecision in a previous version of the paper and for suggesting to me the article \cite{MS18}. 

\section{Quasi-Fuchisan manifolds}
\label{quasi-fuchsian}

We start by introducing \emph{quasi-fuchsian} manifolds and their geometry. 

\subsection{Marked hyperbolic 3-manifolds}
Let $M$ be a compact, connected, oriented 3-manifold. A marked hyperbolic structure on $M$ is a complete Riemannian metric on $\text{int}(M)$ of constant sectional curvature $\text{sec}\equiv-1$. We regard two Riemannian metrics as equivalent if they are isometric via a diffeomorphism homotopic to the identity.

Every marked hyperbolic structure corresponds to a quotient $\mb{H}^3/\Gamma$ of the hyperbolic 3-space $\mb{H}^3$ by a discrete and torsion free group of isometries $\Gamma<\text{Isom}^+\pa{\mb{H}^3}=\psl$ together with an identification of $\pi_1(M)$ with $\Gamma$, called the {\em holonomy representation} $\rho:\pi_1(M)\longrightarrow\psl$.

We are mostly interested in the cases where $M=\Sigma\times\qa{-1,1}$ is a trivial I-bundle over a surface and when $M$ is closed. By Mostow Rigidity, if $M$ is closed and admits a hyperbolic metric, then the metric is unique up to isometries. In this case we denote by $\vol{M}$ the volume of such a metric.

\subsection{Quasi-fuchsian manifolds}
A particularly flexible class of structures is provided by the so-called \emph{quasi-fuchsian} manifolds 

\begin{dfn}[Quasi-Fuchsian]
A marked hyperbolic structure $Q$ on $\Sigma\times\qa{-1,1}$ is \emph{quasi-fuchsian} if $\mb{H}^3/\rho(\pi_1(\Sigma))$ contains a {\em compact} subset which is {\em convex}, that is, containing every geodesic joining a pair of points in it. The smallest convex subset is called the \emph{convex core} and is denoted by $\mc{CC}(Q)$.

The convex core $\mc{CC}(Q)$ is always a topological submanifold. If it has codimension $1$ then it is a totally geodesic surface and we are in the \emph{fuchsian} case, the group $\Gamma<\text{\rm Isom}^+\pa{\mb{H}^3}$ stabilizes a totally geodesic $\mb{H}^2\subset\mb{H}^3$. In the generic case it has codimension $0$ and is homeomorphic to $\Sigma\times\qa{-1,1}$. The inclusion $\mc{CC}(Q)\subset Q$ is always a homotopy equivalence.
\end{dfn}

We denote by 
\[
\vol{Q}:=\vol{\mc{CC}(Q)}\in[0,\infty)
\]
the volume of the convex core of the quasi-fuchsian manifold $Q$.  

\subsection{Deformation space}
We denote by $\T$ the Teichmüller space of $\Sigma$, that is, the space of marked hyperbolic structures on $\Sigma$ up to isometries homotopic to the identity. We equip $\T$ with the Teichmüller metric $d_\T$.

To every quasi-fuchsian manifold $Q$ one can associate the \emph{conformal boundary} $\partial_cQ$ in the following way: The surface group $\pi_1(\Sigma)$ acts on $\mb{H}^3$ by isometries and on $\mb{CP}^1=\partial\mb{H}^3$ by Möbius transformations. It also preserves a convex set, the lift of $\mc{CC}(Q)$ to the universal cover, on which it acts cocompactly. By Milnor-\v{S}varc, for any fixed basepoint $o\in\mb{H}^3$, the orbit map $\gamma\in\pi_1(\Sigma)\rightarrow\gamma o\in\mb{H}^3$ is a quasi-isometric embedding and extends to a topological embedding on the boundary $\partial\pi_1(\Sigma)\hookrightarrow\mb{CP}^1$. The image is a topological circle $\Lambda$, called the \emph{limit set}, that divides the Riemann sphere $\mb{CP}^1$ into a union of two topological disks $\Omega=\mb{CP}^1\setminus\Lambda$. The action $\pi_1(\Sigma)\curvearrowright\Omega$ preserves the connected components, and is free, properly discontinuous and conformal. The quotient $\partial_cQ=\Omega/\pi_1(\Sigma)=X\sqcup Y$ is a disjoint union of two marked oriented Riemann surfaces, homeomorphic to $\Sigma$, and it is called the \emph{conformal boundary} of $Q$. The quotient $\bar{Q}:=(\mb{H}^3\cup\Omega)/\Gamma$ compactifies $Q$.

\begin{thm}[Double Uniformization, Bers \cite{Bers60}]
\label{bers}
For every ordered pair of marked hyperbolic surfaces $(X,Y)\in\T\times\T$ there exists a unique equivalence class of quasi-fuchsian manifolds, denoted by $Q(X,Y)$, realizing the conformal boundary $\partial_cQ(X,Y)=X\sqcup Y$.
\end{thm}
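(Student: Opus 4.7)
The plan is to prove existence by quasi-conformal deformation of a Fuchsian uniformization, and uniqueness by a removable-singularities argument applied to an equivariant conformal lift.

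For existence, I would pick a Fuchsian uniformization $\Gamma_0 < \mathrm{PSL}_2(\mb{R})$ of $X$, so that $\mb{H}^+/\Gamma_0 \cong X$ and $\mb{H}^-/\Gamma_0 \cong \bar X$. Since $\bar X$ and $Y$ are homeomorphic marked Riemann surfaces of genus $g$, choose a quasi-conformal map $h\colon \bar X \to Y$ in the correct marking class and lift its Beltrami coefficient to a $\Gamma_0$-invariant Beltrami coefficient $\mu$ on $\mb{H}^-$; extend $\mu$ by zero to $\mb{H}^+$. The measurable Riemann mapping theorem then produces a unique quasi-conformal homeomorphism $w \colon \mb{CP}^1 \to \mb{CP}^1$ solving $\bar\partial w = \mu\,\partial w$, normalized to fix $0,1,\infty$. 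For each $\gamma \in \Gamma_0$, the composition $w\gamma$ has the same Beltrami coefficient as $w$ by $\Gamma_0$-invariance of $\mu$ and holomorphy of $\gamma$, so uniqueness forces $w\gamma w^{-1}$ to be M\"obius. Hence $\Gamma := w \Gamma_0 w^{-1}$ is a discrete subgroup of $\psl$; its domain of discontinuity is $w(\mb{H}^+) \sqcup w(\mb{H}^-)$, with quotient $X \sqcup Y$ by construction, giving the desired $Q(X,Y) := \mb{H}^3/\Gamma$.

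For uniqueness, suppose $Q_1,Q_2$ are two quasi-Fuchsian manifolds realizing the ordered pair $(X,Y)$. The identifications $\partial_c Q_i = X \sqcup Y$ provide a biholomorphism $\partial_c Q_1 \to \partial_c Q_2$ in the correct marking class, which lifts to a $\pi_1(\Sigma)$-equivariant holomorphic map $\phi \colon \Omega_1 \to \Omega_2$ between domains of discontinuity. The induced type-preserving isomorphism $\rho \colon \Gamma_1 \to \Gamma_2$ yields an equivariant homeomorphism of abstract Gromov boundaries which, through the Milnor-\v{S}varc orbit-map identifications $\partial \pi_1(\Sigma) \cong \Lambda_i$, extends $\phi$ across the limit set to a self-homeomorphism $\tilde\phi$ of $\mb{CP}^1$. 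Since $\Lambda_1$ is a quasi-circle, it has Lebesgue measure zero, so the removable singularity theorem promotes $\tilde\phi$ to a globally holomorphic self-map of $\mb{CP}^1$, necessarily a M\"obius transformation, which by construction conjugates $\Gamma_1$ to $\Gamma_2$.

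The main technical obstacle is the continuous extension of $\phi$ across the limit set: one must check that the holomorphic map $\phi$ on $\Omega_1$ and the abstract boundary conjugation induced by $\rho$ glue into a genuine homeomorphism of $\mb{CP}^1$. In the quasi-Fuchsian setting this is comparatively painless, since each $\Lambda_i$ is a topological circle on which the action is topologically conjugate to the Fuchsian action on $\mb{RP}^1$, so equivariance pins down $\tilde\phi\big|_{\Lambda_1}$ and continuity at the limit set follows from boundedness of $\phi$ near $\Lambda_1$ via a normal families argument. Once that extension is in hand, the measure-zero property of quasi-circles together with the classical removable-singularity principle closes the argument.
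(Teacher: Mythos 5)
The paper itself gives no proof of this statement (it is quoted directly from Bers), so I am comparing your proposal with the classical argument. Your existence half is exactly that classical argument: Beltrami coefficient lifted from $h\colon\bar X\to Y$ on $\mb{H}^-$, extended by zero, solved by the measurable Riemann mapping theorem, with $w\gamma w^{-1}$ Möbius by uniqueness of normalized solutions. This is essentially correct; the only thing missing is a sentence on why $\mb{H}^3/w\Gamma_0w^{-1}$ is quasi-fuchsian in the sense used in this paper (existence of a compact convex subset), e.g.\ because the limit set $w(\mb{RP}^1)$ is a quasicircle and the action on its convex hull is cocompact, or because $w$ extends to the closed ball so that $(\mb{H}^3\cup\Omega)/\Gamma$ is compact.

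The uniqueness half has a genuine gap at its final step. Having glued the equivariant conformal map $\phi\colon\Omega_1\to\Omega_2$ to the boundary conjugacy to obtain a homeomorphism $\tilde\phi$ of $\mb{CP}^1$, you conclude that $\tilde\phi$ is Möbius from ``$\Lambda_1$ has Lebesgue measure zero plus the removable singularity theorem.'' That inference is not valid: removability of a compact set for homeomorphisms which are conformal off the set (conformal removability) is strictly stronger than having zero area, and Bishop has constructed Jordan curves of zero area together with homeomorphisms of the sphere that are conformal off the curve yet are not Möbius. What saves the argument is the specific structure of $\Lambda_1$ as a quasicircle, and the standard way to exploit it is quasiconformal rather than via a bare removable-singularity principle: the components of $\Omega_1$ are Jordan domains, so each restriction of $\phi$ extends continuously to the closure by Carathéodory (your normal-families remark does not by itself give boundary continuity), equivariance and density of fixed points identify the boundary values with the conjugacy on $\Lambda_1$, and then one shows $\tilde\phi$ is globally quasiconformal because quasicircles are removable for quasiconformal homeomorphisms (conjugate by a global quasiconformal map sending $S^1$ to $\Lambda_1$ and use removability of the round circle). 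Since $\Lambda_1$ has zero area, the complex dilatation of $\tilde\phi$ vanishes almost everywhere, and uniqueness in the measurable Riemann mapping theorem forces $\tilde\phi$ to be Möbius, conjugating $\Gamma_1$ to $\Gamma_2$. Without this upgrade to a global quasiconformal map (or an explicit appeal to conformal removability of quasicircles), the proof as written does not close.
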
 

The mapping class group $\mcg$ acts on quasi-fuchsian manifolds by precomposition with the marking. In Bers coordinates it plainly translates into $\phi Q(X,Y)=Q\pa{\phi X,\phi Y}$.

\subsection{Teichmüller geometry and volumes}
Later, it will be very important for us to quantify the price we have to pay in terms of volume if we want to replace a quasi-fuchsian manifold $Q$ with another one $Q'$. We would like to express $\left|\vol{Q}-\vol{Q'}\right|$ in terms of the geometry of the conformal boundary.

Despite the fact that Weil-Petersson geometry is more natural when considering questions about volumes, we will mainly use the Teichmüller metric $d_\T$. The reason is that we are mostly concerned with {\em upper bounds} for the volumes of the convex cores. It is a classical result of Linch \cite{L74} that the Teichm\"uller distance is bigger than the Weil-Petersson distance $d_{\text{\rm WP}}\le\sqrt{2\pi|\chi(\Sigma)|}d_\T$. The following is our main tool:

\begin{pro}[Proposition 2.7 in Kojima-McShane \cite{KoMc}, see also Schlenker \cite{S13}]
\label{replacement}
There exists $\kappa=\kappa(\Sigma)>0$ such that
\[
\left|\vol{Q(X,Y)}-\vol{Q(X',Y')}\right|\le \kappa\pa{d_\T(X,X')+d_\T(Y,Y')}+\kappa.
\]
\end{pro}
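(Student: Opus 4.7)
The plan is to go through the renormalized volume of Krasnov--Schlenker and to bound its variation along Teichm\"uller geodesics. By the triangle inequality, it suffices to prove the one-sided estimate $|\vol{Q(X,Y)}-\vol{Q(X',Y)}|\le \kappa_0\,d_\T(X,X')+\kappa_0$ with $Y$ fixed and then apply the symmetric bound in the other variable. An alternative shortcut would be to combine Brock's coarse volume-versus-$d_{\text{WP}}$ estimate with $d_{\text{WP}}\le\sqrt{2\pi|\chi(\Sigma)|}\,d_\T$ and a further triangle inequality; this gives the statement up to constants but the Krasnov--Schlenker route is cleaner because it produces honest Lipschitz control.

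First I would compare the convex core volume with the renormalized volume $V_R(Q)$: following Schlenker one has $\vol{\mc{CC}(Q)}=V_R(Q)-\tfrac{1}{4}L_m(Q)$, where $L_m(Q)$ is the length of the measured bending lamination on $\partial\mc{CC}(Q)$. Since $\partial\mc{CC}(Q)$ has intrinsic area $2\pi|\chi(\Sigma)|$ by Gauss--Bonnet applied to the pleated structure and the total bending is controlled purely by topology, $|L_m(Q)|\le C_1(\Sigma)$, so $|\vol{\mc{CC}(Q)}-V_R(Q)|$ is bounded by a constant depending only on $\Sigma$.

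Second, I would invoke the Krasnov--Schlenker variational formula: the derivative of $V_R$ along a Beltrami differential $\mu$ on $X$ (with $Y$ held fixed) is, up to a numerical constant, $dV_R(\mu)=\mathrm{Re}\int_X \Phi_{X,Y}\,\mu$, where $\Phi_{X,Y}$ is a holomorphic quadratic differential on $X$ built from the Schwarzian derivative of the developing map of the projective structure that $Q$ induces on $X$, measured against the Fuchsian uniformization. By Nehari's theorem on univalent maps of the half-plane, the hyperbolic sup-norm $\|\Phi_{X,Y}\|_{L^\infty,\mathrm{hyp}}$ is bounded by a universal constant. Pairing with $\mu\in L^\infty$ and integrating against the hyperbolic area form of $X$, of mass $2\pi|\chi(\Sigma)|$, therefore gives $|dV_R(\mu)|\le C_2(\Sigma)\,\|\mu\|_\infty$.

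Finally, given $X,X'\in\T$ with $d_\T(X,X')=d$, I would connect them by a Teichm\"uller geodesic whose tangent Beltrami differentials have unit $L^\infty$ norm and integrate the bound on $dV_R$ to obtain $|V_R(X,Y)-V_R(X',Y)|\le C_2\,d$. Combined with the convex-core comparison and the symmetric bound on the $Y$-side, this yields the proposition. The main obstacle is the uniform sup-norm bound on $\Phi_{X,Y}$: this is exactly the content of Nehari's theorem applied to the developing map of the projective boundary of $Q$, and in a sense it is what makes the whole argument work. Once this input is granted, everything else is routine triangle inequality and integration along a geodesic.
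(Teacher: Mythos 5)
Your proposal is correct and follows essentially the same route as the paper, which simply invokes Proposition 2.7 of Kojima--McShane (the Krasnov--Schlenker variational formula for $V_R$ plus the Nehari bound on the Schwarzian, integrated along a Teichm\"uller geodesic) together with Schlenker's Theorem 1.1 comparing $V_R$ with $\vol{\mc{CC}(Q)}$, and extends it to both boundary components exactly by your one-variable-at-a-time reduction along the geodesics $X_t$ and $Y_t$. The only slight imprecision is in your second paragraph: the exact relation between $V_R$ and the convex core volume and the purely topological bound on the bending length are not consequences of Gauss--Bonnet alone (the latter is Bridgeman's bending bound), so that step should simply be quoted as Schlenker's theorem, which is precisely what the paper does.
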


This formulation is not literally Proposition 2.7 of \cite{KoMc} so we spend a couple of words to explain the two diffenrences. Firstly, the estimate in Proposition 2.7 of \cite{KoMc} concerns the {\em renormalized volume} and not volume of the convex core. However, the two quantities only differ by a uniform additive constant (see Theorem 1.1 in \cite{S13}). Secondly, their statement is limited to the case where $X=X'=Y'$, but their proof exteds word by word to the more general setting: It suffices to apply their argument to the one parameter families $Q(X,Y_t)$ and $Q(X_t,Y')$, where $X_t$ and $Y_t$ are the Teichm\"uller geodesics joining $X$ to $X'$ and $Y$ to $Y'$. 

\subsection{Geometry of the convex core}
We associate to the quasi-fuchsian manifold $Q=Q(X,Y)$ the Teichmüller geodesic $l:[0,d]\rightarrow\T$ joining $X$ to $Y$ where $d=d_\T(X,Y)$. Work of Minsky \cite{M10} and Brock-Canary-Minsky \cite{BCM} relates the geometry of the Teichm\"uller geodesic $l$ to the internal geometry of $\mc{CC}(Q)$. In the next section we will use this information to glue together convex cores of quasi-fuchsian manifolds in a controlled way. 

As a preparation, we start with a description of the boundary $\partial\mc{CC}(Q)$ and introduce some useful notation. We recall that, topologically, $\mc{CC}(Q)\simeq\Sigma\times[-1,1]$. The convex core separates $\bar{Q}=Q\cup\partial_cQ$ into two connected components, containing, respectively, $X$ and $Y$. We denote by $\partial_X\mc{CC}(Q)$ and $\partial_Y\mc{CC}(Q)$ the components of $\partial\mc{CC}(Q)$ that face, respectively, $X$ and $Y$. As observed by Thurston, the surfaces $\partial_X\mc{CC}(Q)$ and $\partial_Y\mc{CC}(Q)$, equipped with the induced path metric, are hyperbolic. By a result of Sullivan, they are also uniformly bilipschitz equivalent $X$ and $Y$ (see Chapter II.2 of \cite{CEG}).

\section{Gluing and Volume}
\label{gluing and volume}

This section describes a gluing construction (Proposition \ref{gluing}) which is a major technical tool in the article. It allows us to cut and glue together quasi-fuchsian manifolds in a sufficiently controlled way. The control on the {\em models} obtained with this procedure is then exploited to get volume comparisons via the method of natural maps (Proposition \ref{bcg} as in \cite{BCG98}) which is the second major tool of the section. 

Along the way we recover a well-known result (Proposition \ref{iteration}) relating iterations of pseudo-Anosov maps and volumes of quasi-fuchsian manifolds.      

\subsection{Product regions and Cut and Glue construction}

The cut and glue construction we are going to describe is a standard way to glue Riemannian 3-manifolds. Here we import the discussion and some of the observations of Section 5 of \cite{HV} and adapt them to our special setting. We start with a pair of definitions.

\begin{dfn}[Product Region]
Let $Q$ be a quasi-fuchsian manifold. A \emph{product region} $U\subset Q$ is a codimension 0 submanifold homeomorphic to $\Sigma\times\qa{0,1}$ whose inclusion in $Q$ is a homotopy equivalence.

Using the orientation and product structure of $Q$ we can define a \emph{top boundary} $\partial_+U$ and a \emph{bottom boundary} $\partial_-U$. We denote by $Q_-$ and $Q_+$ the parts of $Q$ that lie \emph{below} $\partial_+U$ and \emph{above} $\partial_-U$ respectively.

A product region comes together with a {\em marking}, an identification $j_U:\pi_1(\Sigma)\stackrel{\sim}{\rightarrow}\pi_1(U)$, defined as follows: The data of a marked hyperbolic structure $Q$ gives us an identification $\pi_1(\Sigma)\simeq\pi_1(Q)$ and the inclusion $U\subset Q$, being a homotopy equivalence, gives $\pi_1(Q)\simeq\pi_1(U)$. The marking allows us to talk about the homotopy class of a map between product regions. 
\end{dfn}

Any homotopy equivalence $k:U\rightarrow V$ induces a well-defined mapping class $\qa{k}\in\mcg\simeq\text{\rm Out}^+(\pi_1(\Sigma))$ (Dehn-Nielsen-Baer, Theorem 8.1 in \cite{primer}), namely, the one corresponding to the outer automorphism 
\[
\pi_1(\Sigma)\stackrel{j_U}{\simeq}\pi_1(U)\stackrel{k}{\simeq}\pi_1(V)\stackrel{j_V}{\simeq}\pi_1(\Sigma).
\]

We also want to quantify the geometric quality of a map between product regions. Since we want to keep the curvature tensor under control, a good measurement for us is provided by the $\mc{C}^2$-norm. 

\begin{dfn}[Almost-Isometric]
Let $k:(U,\rho_U)\rightarrow(V,\rho_V)$ be a smooth embedding between Riemannian manifolds. Denote by $\nabla_U,\nabla_V$ the Levi-Civita connections. Consider the $\mc{C}^2$-norm
\[
\left|\left|\rho_U-k^*\rho_V\right|\right|_{\mc{C}^2}:=\left|\left|\rho_U-k^*\rho_V\right|\right|_{\mc{C}^0}+\left|\left|\nabla_Uk^*\rho_V\right|\right|_{\mc{C}^0}+\left|\left|\nabla_U\nabla_Uk^*\rho_V\right|\right|_{\mc{C}^0}.
\]
For $\xi>0$ we say that $k$ is $\xi$-almost isometric if $\left|\left|\rho_U-k^*\rho_V\right|\right|_{\mc{C}^2}<\xi$.
\end{dfn}

The following lemma is what we refer to as the cut-and-glue construction.

\begin{lem}
\label{cut and glue}
Let $Q,Q'$ be quasi-fuchsian manifolds. Denote by $\rho_Q,\rho_{Q'}$ their Riemannian metrics. Suppose we have product regions $U\subset Q$, $U'\subset Q'$ and a diffeomorphism $k:U\rightarrow U'$ between them. Suppose also that $\theta:U\rightarrow\qa{0,1}$ is a smooth function with $\theta|_{\partial_-U,\partial_+U}\equiv 0,1$. Then we can form the 3-manifold 
\[
Q''=Q_-\cup_{k:U\rightarrow U'}Q'_+
\]
and endow it with the Riemannian metric
\[
\rho:=
\left\{
\begin{array}{l l}
\rho_Q &\text{ on }Q_-\setminus U\\
(1-\theta)\rho_Q+\theta k^*\rho_{Q'} &\text{ on }U\\
\rho_{Q'} &\text{ on }Q'_+\setminus U'.\\
\end{array}
\right.
\]
If $k$ is $\xi$-almost isometric for some $\xi<1$, then, on $U\subset Q''$, we have the following sectional curvature and diameter bounds
\[
\left|1+\text{\emph{sec}}_{Q''}\right|\le c_3\left|\left|\theta\right|\right|_{\mc{C}^2}\cdot\left|\left|\rho_Q-k^*\rho_{Q'}\right|\right|_{\mc{C}^2}
\]
for some universal constant $c_3$ and
\[
\text{\rm diam}_\rho(U)\le(1+\xi)\text{\rm diam}_{\rho_U}(U).
\]
In particular, if $\text{\rm diam}_{\rho_U}(U)$ is uniformly bounded, the same is true for $\text{\rm vol}_\rho(U)$.
\end{lem}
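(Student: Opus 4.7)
The plan is to treat $\rho$ on $U$ as a small perturbation of the hyperbolic metric $\rho_Q$. Set $h := k^*\rho_{Q'} - \rho_Q$, so that $\rho = \rho_Q + \theta h$ on $U$. The $\xi$-almost isometric hypothesis translates into $\|h\|_{\mc{C}^2}<\xi<1$, so $\rho$ stays in a fixed $\mc{C}^2$-ball around $\rho_Q$, and the boundary conditions $\theta|_{\partial_-U,\partial_+U}\equiv 0,1$ ensure that $\rho$ matches $\rho_Q$ across $\partial_-U$ and $k^*\rho_{Q'}$ across $\partial_+U$, defining a smooth Riemannian metric on $Q''$.

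For the curvature bound, I would exploit that the Riemann tensor $R$ is a rational function of $g$ and a polynomial in its partials of order $\le 2$, hence a smooth function of the $2$-jet of the metric. Along the linear path $\rho_t := \rho_Q + t\theta h$, $t\in[0,1]$, the fundamental theorem of calculus gives
\[
R(\rho) - R(\rho_Q) = \int_0^1 DR|_{\rho_t}(\theta h)\,dt,
\]
where $DR|_{\rho_t}$ is a second-order linear differential operator in $\theta h$ whose coefficients depend smoothly on the $2$-jet of $\rho_t$. Since every $\rho_t$ lies in the $\mc{C}^2$-ball of radius $\xi<1$ around $\rho_Q$, and $\rho_Q$ is hyperbolic (its $2$-jet is standard in geodesic normal coordinates), these coefficients admit a universal pointwise bound. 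The integrand is therefore pointwise $\lesssim \|\theta h\|_{\mc{C}^2}$, and the Leibniz rule gives $\|\theta h\|_{\mc{C}^2} \le c\,\|\theta\|_{\mc{C}^2}\|h\|_{\mc{C}^2}$. Because $R(\rho_Q)$ corresponds to sectional curvature $-1$ and $\rho$ is uniformly $\mc{C}^0$-close to $\rho_Q$, passing from Riemann tensors to sectional curvatures costs only a bounded factor, yielding $|1+\text{sec}_{Q''}| \le c_3\,\|\theta\|_{\mc{C}^2}\|h\|_{\mc{C}^2}$.

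For the diameter and volume estimates, $\xi$-almost isometry forces the pointwise quadratic-form inequalities $(1-\xi)\rho_Q \le k^*\rho_{Q'} \le (1+\xi)\rho_Q$ on $TU$. Taking a convex combination with weights $\theta,1-\theta\in[0,1]$ preserves these, so $\rho \le (1+\xi)\rho_Q$ on $TU$. Path lengths in $\rho$ are thus at most $\sqrt{1+\xi}\le 1+\xi$ times those in $\rho_Q|_U$, whence $\text{\rm diam}_\rho(U) \le (1+\xi)\,\text{\rm diam}_{\rho_U}(U)$. The same comparison gives $\text{\rm vol}_\rho(U) \le (1+\xi)^{3/2}\,\text{\rm vol}_{\rho_Q}(U)$, and the right-hand side is controlled in terms of $\text{\rm diam}_{\rho_U}(U)$ by Bishop--Gromov volume comparison for the constant-curvature metric $\rho_Q$.

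The main obstacle is the universality of $c_3$: one must bound the coefficients of the linearized curvature operator $DR$ uniformly on a $\mc{C}^2$-neighborhood of an arbitrary hyperbolic metric. Working in geodesic normal coordinates for $\rho_Q$, both $\rho_Q$ and $DR|_{\rho_Q}$ take a standard normal form depending only on dimension and on the constant sectional curvature $-1$, so the required bound is genuinely independent of $Q, Q'$ and of the point in $U$.
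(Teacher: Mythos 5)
Your overall route --- writing $\rho=\rho_Q+\theta h$ with $h=k^*\rho_{Q'}-\rho_Q$, controlling the curvature through the dependence of the Riemann tensor on the $2$-jet of the metric, and deducing the diameter and volume bounds from the quadratic-form comparison $(1-\xi)\rho_Q\le k^*\rho_{Q'}\le(1+\xi)\rho_Q$ --- is the standard computation this lemma refers to; the paper itself gives no proof and imports the statement from Section 5 of \cite{HV}. The diameter and volume parts of your argument are correct as written.

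The curvature estimate, however, has a genuine gap. You assert that every $\rho_t=\rho_Q+t\theta h$ lies in the $\mc{C}^2$-ball of radius $\xi$ around $\rho_Q$ and conclude that the coefficients of $DR|_{\rho_t}$ admit a universal bound. This is false: only the $\mc{C}^0$-part of $\rho_t-\rho_Q=t\theta h$ is bounded by $\xi$ (because $0\le\theta\le1$); its first and second derivatives involve $\nabla\theta$ and $\nabla\nabla\theta$ and are bounded only by a multiple of $\left|\left|\theta\right|\right|_{\mc{C}^2}\left|\left|h\right|\right|_{\mc{C}^2}$, which the lemma does not assume to be small. The $\mc{C}^0$-bound does give $\rho_t\ge(1-\xi)\rho_Q$ and hence control of $\rho_t^{-1}$, but the coefficients of the linearized curvature operator at $\rho_t$ also contain the $1$- and $2$-jet of $\rho_t$ itself; doing the bookkeeping honestly yields a curvature correction of size $\text{\rm const}\cdot\pa{\left|\left|\theta\right|\right|_{\mc{C}^2}\left|\left|h\right|\right|_{\mc{C}^2}+\pa{\left|\left|\theta\right|\right|_{\mc{C}^2}\left|\left|h\right|\right|_{\mc{C}^2}}^2}$, with a constant that moreover degenerates as $\xi\to1$ through $(1-\xi)^{-1}$. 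The quadratic term, coming from products of first derivatives of $\theta h$ (equivalently from the square of the difference-of-Christoffel tensor), cannot be eliminated, and your argument misses it precisely because of the false $\mc{C}^2$-ball claim. The stated linear bound with a single constant $c_3$ is thus only obtained in the regime where the product $\left|\left|\theta\right|\right|_{\mc{C}^2}\cdot\left|\left|\rho_Q-k^*\rho_{Q'}\right|\right|_{\mc{C}^2}$ is a priori bounded --- which is exactly how the lemma is used, since Lemma \ref{bump function} gives $\left|\left|\theta\right|\right|_{\mc{C}^2}\le K(D_1)$ and $\xi<1$ --- so your proof needs this hypothesis made explicit and the quadratic term absorbed. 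A secondary slip: the boundary conditions $\theta|_{\partial_\pm U}\equiv0,1$ alone do not make $\rho$ smooth across the gluing locus, since $\nabla\theta$ need not vanish there; one needs $\theta$ constant near $\partial_\pm U$, as provided by Lemma \ref{bump function}.
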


We associate two parameters to a product region, {\em diameter} and {\em width}
\begin{align*}
&\text{diam}(U):=\sup\ga{d_Q(x,y)\left|x,y\in U\right.},\\
&\text{width}(U):=\inf\ga{d_Q(x,y)\left|x\in\partial_+U,y\in\partial_-U\right.}.
\end{align*}
If a product region has width at least $D$ and diameter at most $2D$ we say that it has {\em size} $D$. The Margulis Lemma implies that the injectivity radius of a product region of size $D$, defined as
\[
\text{inj}(U):=\inf_{x\in U}\ga{\text{inj}_x(Q)},
\]
is bounded from below in terms of $D$

\begin{lem}
\label{injrad}
For every $D>0$ there exists $\ep_0(D,g)>0$ such that a product region $U$ of size $D$ has $\text{\rm inj}(U)>\ep_0$.  
\end{lem}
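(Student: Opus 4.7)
The idea is to combine the Margulis Lemma with the non-abelianness of $\pi_1(\Sigma)$. Let $\epsilon_M$ denote the Margulis constant of $\mathbb{H}^3$. A point $x\in U$ with $\text{\rm inj}_x(Q)\geq\epsilon_M/2$ is already uniformly thick, so assume from now on that there is $x_0\in U$ with $\epsilon:=\text{\rm inj}_{x_0}(Q)<\epsilon_M/2$, and aim to bound $\epsilon$ from below. Since $Q$ is quasi-fuchsian it has no cusps, so the Margulis Lemma places $x_0$ in a Margulis tube $T\subset Q$ around a short closed geodesic $\gamma^*\subset Q$. Topologically $T$ is a solid torus with $\pi_1(T)\simeq\mathbb{Z}$ generated by the homotopy class of $\gamma^*$ in $\pi_1(Q)=\pi_1(\Sigma)$.

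The key topological step is the observation that $U$ cannot be contained in $T$. If we had $U\subset T\subset Q$, the inclusion $U\hookrightarrow Q$ (a homotopy equivalence by the definition of a product region) would factor through $T$, and hence at the level of fundamental groups the isomorphism $\pi_1(U)=\pi_1(\Sigma)\to\pi_1(Q)=\pi_1(\Sigma)$ would factor through $\pi_1(T)\simeq\mathbb{Z}$. This is impossible because $\pi_1(\Sigma)$ is non-abelian for $g\geq 2$. Hence there exists some $y\in U\setminus T$; the diameter bound gives $d_Q(x_0,y)\leq 2D$, and since any path from $x_0\in T$ to $y\notin T$ must cross $\partial T$, we obtain $R-r\leq 2D$, where $R$ is the radius of $T$ and $r:=d_Q(x_0,\gamma^*)$.

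To finish, I apply the standard Margulis tube displacement formula comparing the displacement at $x_0$ (equal to $2\epsilon$) with that on $\partial T$ (equal to $\epsilon_M$). Up to a universal multiplicative constant absorbing the rotation angle of $\gamma^*$, this yields
\[
\frac{\sinh(\epsilon)}{\sinh(\epsilon_M/2)}\gtrsim\frac{\cosh(r)}{\cosh(R)}\geq e^{-(R-r)}\geq e^{-2D},
\]
hence a uniform lower bound $\epsilon\geq\epsilon_0(D,g)>0$. The main obstacle is the topological step $U\not\subset T$: this is where the product-region hypothesis (namely that $U$ carries the full $\pi_1(\Sigma)$) enters essentially. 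Once it is in place, the injectivity radius estimate reduces to routine Margulis tube bookkeeping.
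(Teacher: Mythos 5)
Your argument is correct and is essentially the paper's own proof: the paper likewise observes that $\pi_1(U)\to\pi_1(Q)$ is surjective, so a region of diameter at most $2D$ that met a very thin Margulis tube too deeply would lie entirely inside it and the map would factor through $\pi_1(\mathbb{T}_\gamma)\simeq\mathbb{Z}$, which is impossible. The only difference is that you make the quantitative Margulis-tube step explicit, and there you should be slightly careful: the short element realizing $\text{\rm inj}_{x_0}(Q)$ may be a power $\gamma^k$ of the primitive element, so the displacement comparison needs the standard fact that the depth of the $\epsilon$-thin part inside the $\epsilon_M$-tube tends to infinity as $\epsilon\to 0$ (uniformly, over all powers and rotation angles), which is exactly what the paper sweeps into the phrase ``cannot intersect too deeply any very thin Margulis tube.''
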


\begin{proof}
The inclusion of $U$ in $Q$ is $\pi_1$-surjective. Having diameter bounded by $2D$, the region $U$ cannot intersect too deeply any very thin Margulis tube $\mb{T}_\gamma$ otherwise $\pi_1(U)\rightarrow\pi_1(Q)$ would factor through $\pi_1(U)\rightarrow\pi_1(\mb{T}_\gamma)$.
\end{proof}

In particular, a compactness argument with the geometric topology on pointed hyperbolic manifolds gives us the following property: Once we fix the size of a product region we can produce a uniform bump function on it.

\begin{lem}[Lemma 5.2 of \cite{HV}] 
\label{bump function}
For every $D>0$ there exists $K>0$ such that the following holds: Let $U\simeq\Sigma\times\qa{0,1}$ be a product region of size $D$. Then there exists a smooth function $\theta:U\rightarrow\qa{0,1}$ with the following properties:
\begin{itemize}
\item{Near the boundaries it is constant: $\theta|_{\partial_-U}\equiv 0$ and $\theta|_{\partial_+U}\equiv 1$.}
\item{Uniformly bounded $\mc{C}^2$-norm $\left|\left|\theta\right|\right|_{\mc{C}^2}\le K$.}
\end{itemize}
\end{lem}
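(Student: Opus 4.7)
My proof plan is a compactness argument in the pointed geometric ($\mathcal{C}^\infty$) topology on hyperbolic 3-manifolds.

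Suppose for contradiction that for some $D>0$ no such constant $K$ exists. Then there is a sequence of product regions $U_n \subset Q_n$ of size $D$ inside quasi-fuchsian manifolds $Q_n$ such that every smooth $\theta : U_n \to [0,1]$ satisfying the boundary conditions has $\|\theta\|_{\mathcal{C}^2} \to \infty$. Pick basepoints $x_n \in U_n$. By Lemma \ref{injrad} there exists $\ep_0 = \ep_0(D,g) > 0$ with $\text{\rm inj}_{x_n}(Q_n) \ge \ep_0$ for all $n$. Since the sectional curvature is identically $-1$, the classical compactness theorem for pointed hyperbolic manifolds with uniform injectivity radius lower bound at the basepoint (a special case of Cheeger-Gromov compactness) yields a subsequence converging in the pointed $\mathcal{C}^\infty$ topology: $(Q_{n_k}, x_{n_k}) \to (Q_\infty, x_\infty)$.

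Concretely, this means there exist radii $R_k \to \infty$ and smooth maps $\phi_k : B(x_\infty, R_k) \to Q_{n_k}$ sending $x_\infty \mapsto x_{n_k}$ that are diffeomorphisms onto their images, with $\phi_k^*\rho_{n_k} \to \rho_\infty$ in $\mathcal{C}^\infty$ on compact sets. Since $\text{\rm diam}(U_{n_k}) \le 2D$, the regions $U_{n_k}$ are contained in $B(x_{n_k}, 2D)$, so pulling back via $\phi_k$ we obtain submanifolds $V_k = \phi_k^{-1}(U_{n_k}) \subset B(x_\infty, 2D+1)$ for large $k$. After a further subsequence, using precompactness of embedded product regions of bounded diameter in a fixed compact set (together with uniform injectivity radius control on both sides of the boundary), the $V_k$ converge smoothly to a limiting product region $U_\infty \subset Q_\infty$ of size $D$, with $\partial_\pm V_k \to \partial_\pm U_\infty$ as smooth submanifolds.

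On the single fixed product region $U_\infty$ we simply pick any smooth function $\theta_\infty : U_\infty \to [0,1]$ equal to $0$ near $\partial_- U_\infty$ and $1$ near $\partial_+ U_\infty$; such a function exists (for instance by smoothing a regularized distance to $\partial_- U_\infty$ and composing with a cutoff) and has some finite $\mathcal{C}^2$-norm $K_\infty = \|\theta_\infty\|_{\mathcal{C}^2}$, measured with respect to $\rho_\infty$. Because $V_k \to U_\infty$ smoothly and $\phi_k^* \rho_{n_k} \to \rho_\infty$ in $\mathcal{C}^\infty$, a slight modification of $\theta_\infty|_{V_k}$ (interpolating near the boundaries so that it exactly matches $0$ and $1$ on $\partial_\pm V_k$, which costs at most an arbitrarily small change in $\mathcal{C}^2$-norm for $k$ large) yields functions $\tilde{\theta}_k : V_k \to [0,1]$ with the required boundary behavior and $\|\tilde{\theta}_k\|_{\mathcal{C}^2, \phi_k^*\rho_{n_k}} \le K_\infty + 1$. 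Pushing these forward by $\phi_k$ produces bump functions on $U_{n_k}$ with the same $\mathcal{C}^2$-norm bound (the norm is computed intrinsically), contradicting our standing assumption.

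The main obstacle in carrying out this plan is the smooth convergence of the product regions $V_k$, including their boundaries: we need that $\partial_\pm V_k$ converge as smooth hypersurfaces so that the boundary-value conditions can be transferred with a controlled $\mathcal{C}^2$ cost. This is handled by combining the uniform diameter and injectivity radius bounds with the fact that $\partial_\pm U_n$ carry hyperbolic metrics uniformly bilipschitz to fixed Teichmüller points (Thurston-Sullivan, as recalled at the end of Section \ref{quasi-fuchsian}), which gives the equicontinuity needed to pass to smooth subsequential limits of the boundary surfaces.
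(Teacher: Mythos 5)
The paper itself does not prove this lemma; it quotes it from Lemma 5.2 of \cite{HV} and describes it as following from ``a compactness argument with the geometric topology'', so your overall strategy is the one the paper gestures at. However, as written your argument has a genuine gap exactly at the step you flag as the main obstacle: the smooth subconvergence of the regions $V_k$ \emph{together with their boundaries}. A product region is only required to be an embedded copy of $\Sigma\times[0,1]$ with width $\ge D$ and diameter $\le 2D$; there is no bound whatsoever on the extrinsic geometry (second fundamental form, normal injectivity radius) of the hypersurfaces $\partial_\pm U_n$, so ``precompactness of embedded product regions of bounded diameter in a fixed compact set'' is not a valid principle -- bounded-diameter embedded surfaces can degenerate and need not have any $\mc{C}^k$-convergent subsequence. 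The patch you propose does not work either: the Thurston--Sullivan statement recalled in Section \ref{quasi-fuchsian} concerns the boundary of the \emph{convex core}, not the boundary of an arbitrary product region, and in any case intrinsic bilipschitz control of the induced metric on $\partial_\pm U_n$ would give no control on how these surfaces sit in $Q_n$, which is what a smooth limit of $V_k$ with $\partial_\pm V_k\to\partial_\pm U_\infty$ would require.

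The good news is that the argument does not need smooth convergence of the boundaries, and once you see this you can even dispense with compactness. Since the function you want is \emph{locally constant} near $\partial_\pm U$, only metric information about the boundaries is relevant: the width bound gives $d_Q(\partial_-U,\partial_+U)\ge D$, so you can take $\theta=\chi\circ\psi$, where $\psi$ is a smoothing of the $1$-Lipschitz function $x\mapsto d_Q(x,\partial_-U)$ (mollify equivariantly in $\mb{H}^3$, or use the injectivity radius bound $\text{\rm inj}(U)\ge\ep_0(D,g)$ from Lemma \ref{injrad} to mollify at a definite scale, giving $\mc{C}^2$-bounds depending only on $D$ and $g$) and $\chi:[0,\infty)\to[0,1]$ is a fixed cutoff vanishing on $[0,D/3]$ and equal to $1$ on $[2D/3,\infty)$ with $|\chi'|,|\chi''|\le c/D$. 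This $\theta$ is identically $0$ near $\partial_-U$, identically $1$ near $\partial_+U$, and $\|\theta\|_{\mc{C}^2}\le K(D,g)$. If you prefer to keep the contradiction/compactness framework, the same observation repairs it: pass only to Hausdorff limits of $\partial_\pm V_k$ (these exist with no regularity input), note that the limits stay at distance $\ge D$ apart, build $\theta_\infty$ as a cutoff of the distance to the limit of $\partial_-V_k$, and transfer it back; since $\theta_\infty$ is constant on neighborhoods of the limit sets, Hausdorff closeness of $\partial_\pm V_k$ suffices and no interpolation with controlled $\mc{C}^2$ cost near a merely topologically converging boundary is needed.
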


\subsection{Almost-isometric embeddings of product regions}
For a fixed $\eta>0$ we denote by $\T_\eta$ the {\em $\eta$-thick part} of Teichmüller space consisting of those hyperbolic structures with no geodesic shorter than $\eta$. 

The following is a consequence of the {\em model manifold} technology developed by Minsky \cite{M10} around the solution of the Ending Lamination Conjecture (completed then in Brock-Canary-Minsky \cite{BCM}).
  
\begin{pro}[see Proposition 6.2 \cite{HV}]
\label{minsky model}
For every $\eta,\xi,\delta,D>0$ there exists $D_0(\eta,g)$ and $h=h(\eta,\xi,\delta,D)>0$ such that the following holds: Let $Q_1$, $Q_2$ be quasi-fuchsian manifolds with associated Teichm\"uller geodesics $l_i:I_i\subseteq\mb{R}\rightarrow\T$ with $i=1,2$. Suppose that $l_1,l_2$ $\delta$-fellow travel on a subsegment $J$ of length at least $h$ and entirely contained in $\T_\eta$. Then there exist product regions $U_i\subset\mc{CC}(Q_i)$ of size $D$ and a $\xi$-almost isometric embedding $k:U_1\rightarrow U_2$ in the homotopy class of the identity. Moreover, if $D\ge D_0$ we can assume that $U_i$ contains the geodesic representative of $\alpha$, a curve which has moderate length for both $Q_i$ and $T\in J$ the midpoint of the segment, i.e. $l_{Q_i}(\alpha)$, $L_T(\alpha)\le D_0$.
\end{pro}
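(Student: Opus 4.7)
The plan is to deduce this from the Minsky--Brock--Canary--Minsky model manifold theorem combined with a bounded-geometry compactness argument. The argument closely parallels the one in \cite{HV}; the only additional content here is tracking the moderate-length curve $\alpha$ in the interior.

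First, for each $Q_i$ apply the Ending Lamination Theorem to obtain a biLipschitz model $\Phi_i : M_i \to \mathcal{CC}(Q_i)$ built from the hierarchy of the Teichm\"uller geodesic $l_i$. Over an $\eta$-thick subsegment of $l_i$ the model takes a particularly simple standard form: it is a thickening of $\Sigma$ whose slices carry metrics uniformly comparable to the hyperbolic structures $l_i(t)$, with biLipschitz constants depending only on $\eta$ and $g$. The hypothesis that $l_1,l_2$ are $\delta$-close in the thick segment $J$ means the two model data agree on $J$ up to an error controlled by $\delta$, giving a $\mathcal{C}^0$-close identification of the corresponding pieces of $\mathcal{CC}(Q_1)$ and $\mathcal{CC}(Q_2)$.

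The main obstacle is upgrading this $\mathcal{C}^0$-control to a $\xi$-almost isometry in the $\mathcal{C}^2$ sense, which is what Lemma \ref{cut and glue} requires in order to keep the sectional curvature of the glued metric near $-1$. This is precisely the role of the length $h$: one needs enough fellow-traveling room around a product region of size $D$ so that geometric convergence improves the biLipschitz data to $\mathcal{C}^2$-closeness. I would argue by contradiction and compactness. Suppose the conclusion failed for some fixed $\eta,\xi,\delta,D$: there would exist sequences $(Q_1^n,Q_2^n)$ whose Teichm\"uller geodesics $\delta$-fellow travel on $\eta$-thick segments $J^n$ of lengths $h_n \to \infty$, yet no $\xi$-almost isometric $k^n$ on any product region of size $D$ exists. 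Choose basepoints $p_i^n$ deep inside the model over the midpoint of $J^n$. Lemma \ref{injrad} gives a uniform lower bound on the injectivity radius at $p_i^n$, so by Cheeger--Gromov compactness for pointed hyperbolic 3-manifolds, pass to geometric limits $(Q_i^\infty,p_i^\infty)$; these are smooth complete hyperbolic manifolds. Because $h_n \to \infty$ and the models are determined by the Teichm\"uller data on an arbitrarily long common thick window, the two limits are isometric via a marking-preserving isometry on arbitrarily large balls. Composing the limit isometry with the smooth convergence maps produces, for all large $n$, diffeomorphisms $k^n : U_1^n \to U_2^n$ between product regions of size $D$ that are $\xi$-almost isometric in $\mathcal{C}^2$, contradicting the standing assumption. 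This is where I expect the bookkeeping to be most delicate, because $\mathcal{C}^2$-control of the pullback metric is stronger than the biLipschitz output of the model theorem and forces one to invoke the smooth convergence of locally Einstein (hence real-analytic) metrics of bounded geometry.

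For the \emph{moreover} clause, choose $D_0 = D_0(\eta,g)$ to be a Bers-type constant with the property that every point of $\mathcal{T}_\eta$ carries a simple closed curve of hyperbolic length at most $D_0$. Applied at the midpoint $T\in J$ this yields a curve $\alpha$ with $L_T(\alpha)\le D_0$. By the biLipschitz model over the $\eta$-thick part, the geodesic representative of $\alpha$ in each $Q_i$ then also has length at most $D_0$ (after enlarging $D_0$ by the model constants) and lies within bounded distance of $\Phi_i(\Sigma\times\{t_0\})$, where $t_0$ is the parameter of $T$. Choosing the product region $U_i$ above to be centered at this slice and of size $D \ge D_0$, the geodesic representative of $\alpha$ is contained in $U_i$, as required.
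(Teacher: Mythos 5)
The paper gives no proof of Proposition \ref{minsky model} at all; it simply refers to Proposition 6.2 of \cite{HV}, whose argument is the strategy you outline: bounded geometry of the convex core over the $\eta$-thick Teichm\"uller window, a compactness/geometric-limit argument with basepoints over the midpoint of $J$, identification of the two limits (whose associated thick geodesics fellow travel for infinite time, hence share ending data) via the Ending Lamination Theorem, smooth convergence of hyperbolic metrics to upgrade to a $\xi$-almost isometric embedding in $\mathcal{C}^2$, and a Bers-constant curve at the midpoint slice to locate the product regions. So your proposal is correct in outline and essentially coincides with the proof of the cited source.
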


In the statement and in the next section we use the following notation:

\begin{center}
\label{notation1}
\begin{minipage}{.8\linewidth}
\textbf{Notation}. If $\alpha:S^1\rightarrow Q$ is a closed loop in a hyperbolic 3-manifold, we denote by $l(\alpha)$ its length and by $l_Q(\alpha)$ the length of the unique geodesic representative in the homotopy class. If the target instead is a hyperbolic surface $\alpha:S^1\rightarrow X$, we use the notations $L(\alpha)$ and $L_X(\alpha)$.
\end{minipage}
\end{center}

For a proof we refer to \cite{HV}. The geodesic $\alpha$ is used to locate the product regions inside the convex cores. We explain that in the following section. For now we remark the following immediate consequence:

\begin{dfn}[$\eta$-Height]
Let $l:I\rightarrow\T$ be a Teichmüller geodesic. The {\em $\eta$-height} of $l$ is the length of the maximal connected subsegment of $I$ whose image is entirely contained in $\T_\eta$.
\end{dfn}

\begin{cor}
\label{wide cores}
Fix $\eta>0$. There exists a function $\rho:(0,\infty)\rightarrow(0,\infty)$ with $\rho(h)\uparrow\infty$ as $h\uparrow\infty$ and the following property: Let $Q=Q(X,Y)$ be a quasi-fuchsian manifold with associated geodesic $l:I\rightarrow\T$. Suppose that the $\eta$-height is at least $h$ then 
\[
d_Q\pa{\partial_X\mc{CC}(Q),\partial_Y\mc{CC}(Q)}\ge\rho(h).
\]
\end{cor}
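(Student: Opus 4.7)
The plan is to apply Proposition \ref{minsky model} repeatedly to chop the thick subsegment of the Teichm\"uller geodesic into many windows of fixed length and extract from each window a product region of fixed size inside $\mc{CC}(Q)$. Stacking these product regions in order across the convex core will force any path from $\partial_X\mc{CC}(Q)$ to $\partial_Y\mc{CC}(Q)$ to traverse each of them, producing the desired lower bound, which grows linearly in $h$.

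Concretely, I would fix auxiliary constants $\xi<1$, $\delta_0>0$, and $D\ge D_0(\eta,g)$ once and for all, and let $h_0:=h(\eta,\xi,\delta_0,D)$ be the threshold given by Proposition \ref{minsky model} applied with $Q_1=Q_2=Q$ (for which the $\delta_0$-fellow-travel hypothesis is automatic). Let $J\subset I$ be the maximal subsegment with $l(J)\subset\T_\eta$, so $|J|\ge h$ by assumption, and chop $J$ into $N:=\lfloor h/h_0\rfloor$ consecutive subsegments $J_i$ of length $h_0$, with midpoints $T_1<\cdots<T_N$. Proposition \ref{minsky model} then produces, for each $i$, a product region $U_i\subset\mc{CC}(Q)$ of size $D$ containing the geodesic representative $\alpha_i^*$ of a curve $\alpha_i$ with $l_Q(\alpha_i),L_{T_i}(\alpha_i)\le D_0$.

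Under the product identification $\mc{CC}(Q)\cong\Sigma\times[-1,1]$, each $U_i$ is homotopic to a slab $\Sigma\times[s_i^-,s_i^+]$. The crucial step is to verify that these slabs are linearly ordered and pairwise disjoint as $i$ varies, i.e. $s_i^+\le s_{i+1}^-$. Once this is granted, each $U_i$ separates $\partial_X\mc{CC}(Q)$ from $\partial_Y\mc{CC}(Q)$, and any rectifiable path $\gamma$ from one boundary component to the other must cross $U_i$ from $\partial_+U_i$ to $\partial_-U_i$, contributing at least $\mathrm{width}(U_i)\ge D$ to its length. Summing over the $N$ disjoint crossings yields
\[
\mathrm{length}(\gamma)\ge ND\ge D\pa{\frac{h}{h_0}-1},
\]
so one may take $\rho(h):=D(h/h_0-1)$, which tends to infinity with $h$.

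The main obstacle is establishing the ordering and disjointness of the slabs, which the statement of Proposition \ref{minsky model} does not directly certify. Justifying it requires digging into the Minsky model construction \cite{M10, BCM}: every sub-window of the base geodesic carves out an explicit slab in the model manifold, and for sufficiently separated windows these slabs are by construction linearly ordered and disjoint. Alternatively, one can argue that for $h_0$ large the short curves $\alpha_i$ associated to Teichm\"uller-separated thick points are pairwise distinct and pairwise far in the curve complex, which via the bounded-geometry structure of the model translates into geometric separation of their geodesic representatives in $\mc{CC}(Q)$, forcing the $U_i$'s to sit in disjoint slabs.
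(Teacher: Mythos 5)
Your crossing estimate is sound as far as it goes: a product region of size $D$ lying in the interior of $\mc{CC}(Q)$ includes $\pi_1$-isomorphically, its two boundary surfaces are incompressible and isotopic to level surfaces of $\mc{CC}(Q)\simeq\Sigma\times\qa{-1,1}$, so it separates $\partial_X\mc{CC}(Q)$ from $\partial_Y\mc{CC}(Q)$, and any path joining the two boundary components contains a crossing subpath of length at least $\mathrm{width}(U)\ge D$ (with a mild caveat if the region meets $\partial\mc{CC}(Q)$). The genuine gap is precisely the one you flag and do not close: nothing you quote guarantees that your regions $U_1,\dots,U_N$ are pairwise disjoint, let alone ordered, and without disjointness the crossing lengths cannot be summed. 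Note also that you cannot repair this by appealing to the paper's separation mechanism, Lemma \ref{disjoint}, since its proof uses the function $\rho$ of the very corollary you are proving; that route is circular. Your two suggested fixes (unwinding the model manifold of \cite{M10}, \cite{BCM}, or showing that curves of moderate length at Teichm\"uller-separated thick midpoints are far in the curve graph and hence have far-apart bounded-length geodesic representatives in $Q$) are both plausible, but each is a real argument — close in spirit to Lemma \ref{closing} and to the discussion of the functions $F$ and $G$ — and as written your proof is incomplete.

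Moreover, the corollary does not require the linear bound you aim for, and there is a much shorter route entirely within the statement of Proposition \ref{minsky model} that avoids disjointness altogether: the threshold $h(\eta,\xi,\delta,D)$ exists for \emph{every} $D>0$, so given $\eta$-height $h$ one applies the proposition once, with $Q_1=Q_2=Q$ (the fellow-traveling hypothesis being automatic) and with the largest size $D=D(h)$ whose threshold is at most $h$. Since $D(h)\uparrow\infty$ as $h\uparrow\infty$, the single resulting product region has width at least $D(h)$, and the separation/crossing argument above immediately gives
\[
d_Q\pa{\partial_X\mc{CC}(Q),\partial_Y\mc{CC}(Q)}\ge D(h)=:\rho(h).
\]
This is presumably why the paper records the statement as an immediate consequence of Proposition \ref{minsky model}: one region of growing size suffices, and no stacking, ordering, or disjointness of slabs is needed.
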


\subsection{Position of the product region}
From now on we fix once and for all a sufficiently large size $D_1\ge D_0$ for the product regions we consider. 

Let $\alpha:S^1\rightarrow Q$ be a non-trivial closed curve in a hyperbolic $3$-manifold $Q$ that has a geodesic representative $\alpha^*\subset Q$. By basic hyperbolic geometry
\[
\cosh\pa{d_Q(\alpha,\alpha^*)}l_Q(\alpha)\le l(\alpha).
\]
Suppose that $Q=Q(X,Y)$ is a quasi-fuchsian manifold. Let $U\subset Q$ be a product region of size $D_1$ containing a closed geodesic $\alpha$. By the assumption on the size of $U$ and Lemma \ref{injrad} we have $l_Q(\alpha)\ge 2\ep_0(D_1,g)$. Recall that $\partial_X\mc{CC}(Q)$ denotes the boundary of the convex core that faces the conformal boundary $X$. By a Theorem due to Sullivan (see Chapter II.2 and in particular Theorem II.2.3.1 in \cite{CEG}), there exists a universal constant $K$ such that $\partial_X\mc{CC}(Q)$ and $X$ are $K$-bilipschitz equivalent via a homeomorphism in the homotopy class of the identity. We have
\[
d_Q(\partial_X\mc{CC}(Q),\alpha)\le\text{\rm arccosh}\pa{\frac{L_{\partial_X\mc{CC}(Q)}(\alpha)}{l_Q(\alpha)}}\le\text{\rm arccosh}\pa{\frac{KL_X(\alpha)}{2\ep_0(D_1,g)}}.
\]
Let $T\in\T$ be a hyperbolic structure for which $L_T(\alpha)\le D_0(\eta,g)$. Wolpert's inequality $L_X(\alpha)\le L_T(\alpha)e^{2d_\T(X,T)}$ (see Lemma 12.5 in \cite{primer}) allows us to continue the chain of inequalities to the following:
\[
d_Q(\partial_X\mc{CC}(Q),\alpha)\le\text{\rm arccosh}\pa{\frac{KD_0(\eta,g)}{2\ep_0(D_1,g)}e^{2d_\T(X,T)}}.
\]
Let us introduce the function $F:(0,\infty)\rightarrow(0,\infty)$ defined by 
\[
F(t)=\text{\rm arccosh}\pa{\frac{KD_0(\eta,g)}{2\ep_0(D_1,g)}e^{2t}}.
\]
With this notation we have

\begin{lem}
\label{distance boundary}
Let $U\subset Q(X,Y)$ be a product region of size $D_1$ containing a closed geodesic $\alpha\subset U$. Let $T\in\T$ be a surface such that $L_T(\alpha)\le D_0$. Then 
\[
d_Q(\partial_X\mc{CC}(Q),U)\le F(d_\T(X,T)).
\]
\end{lem}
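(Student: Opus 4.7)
The plan is to reduce bounding $d_Q(\partial_X\mc{CC}(Q), U)$ to bounding the distance from $\partial_X\mc{CC}(Q)$ to the closed geodesic $\alpha$ itself. Since $\alpha \subset U$, the trivial inequality $d_Q(\partial_X\mc{CC}(Q), U) \le d_Q(\partial_X\mc{CC}(Q), \alpha)$ holds, so it suffices to estimate the latter in terms of $L_T(\alpha)$ and $d_\T(X, T)$.

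I would then chain together four standard estimates. First, the basic hyperbolic comparison $\cosh(d_Q(c, \alpha))\cdot l_Q(\alpha) \le l(c)$, applied to a representative $c$ of $\alpha$ lying on $\partial_X\mc{CC}(Q)$, yields
\[
d_Q(\partial_X\mc{CC}(Q), \alpha) \le \text{arccosh}\!\left(\frac{l_{\partial_X\mc{CC}(Q)}(\alpha)}{l_Q(\alpha)}\right).
\]
Second, Lemma \ref{injrad}, applied to the size-$D_1$ product region $U$ containing $\alpha$, supplies the lower bound $l_Q(\alpha) \ge 2\ep_0(D_1, g)$ on the denominator. Third, Sullivan's theorem provides a universal constant $K$ and a $K$-bilipschitz homeomorphism $\partial_X\mc{CC}(Q) \to X$ in the homotopy class of the identity, hence $l_{\partial_X\mc{CC}(Q)}(\alpha) \le K\,L_X(\alpha)$. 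Finally, Wolpert's inequality together with the hypothesis $L_T(\alpha) \le D_0$ gives $L_X(\alpha) \le L_T(\alpha)\,e^{2d_\T(X,T)} \le D_0\, e^{2d_\T(X,T)}$.

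Substituting the last three bounds into the first yields
\[
d_Q(\partial_X\mc{CC}(Q), U) \le \text{arccosh}\!\left(\frac{K D_0}{2\ep_0(D_1,g)}\, e^{2 d_\T(X,T)}\right) = F(d_\T(X,T)),
\]
which is the claim. I do not expect a genuine obstacle: the argument is a routine collation of classical hyperbolic, Kleinian, and Teichm\"uller estimates, and the function $F$ was defined precisely so that the constants from the three monotone steps assemble into its expression. The only delicate input is the use of Lemma \ref{injrad}, which is what converts the essentially combinatorial hypothesis $L_T(\alpha) \le D_0$ into a uniform geometric lower bound on $l_Q(\alpha)$.
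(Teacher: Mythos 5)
Your argument is correct and is essentially the paper's own: the lemma is established there by exactly the same chain of estimates (the $\cosh$ distance--length comparison applied to a representative of $\alpha$ on $\partial_X\mc{CC}(Q)$, the injectivity radius bound of Lemma \ref{injrad} giving $l_Q(\alpha)\ge 2\ep_0(D_1,g)$, Sullivan's $K$-bilipschitz theorem, and Wolpert's inequality), with $F$ defined precisely to absorb the resulting constants. No gap to report.
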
 

Combining Corollary \ref{wide cores} and Lemma \ref{distance boundary} we can ensure that a pair of product regions is well separated. To this extent we introduce the function $G:(0,\infty)\rightarrow(0,\infty)$ defined by 
\[
G(t)=\inf_{t\in\mb{R}}\ga{\text{for every $s>t$ we have $\rho(s)>2F(t)+4D_1$}}.
\]

\begin{lem}
\label{disjoint}
Let $U^-$, $U^+$ be product regions of size $D_1$ in $Q=Q(X^-,X^+)$. Suppose they contain, respectively, closed geodesics $\alpha^-$, $\alpha^+$. Let $T^-$, $T^+\in\T$ be surfaces such that $L_{T^-}(\alpha^-)$, $L_{T^+}(\alpha^+)\le D_0$. Consider 
\[
d:=\max\ga{d_\T(X^-,T^-),d_\T(X^+,T^+)}.
\]
If the $\eta$-height $h$ of $Q$ is at least $h\ge G(d)$ then the product regions are disjoint and cobound a codimension $0$ submanifold $Q^0\subset Q$ homeomorphic to $\Sigma\times\qa{0,1}$ for which $U^-$, $U^+$ are collars of the boundary. 
\end{lem}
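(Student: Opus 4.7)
The plan is to show that each product region $U^{\pm}$ is confined to a small neighborhood of the corresponding boundary component $\partial_{X^{\pm}}\mc{CC}(Q)$ of the convex core, while the two boundary components are pushed far apart by the $\eta$-height hypothesis. Disjointness and the topological picture then follow from a direct distance comparison together with the product structure of $Q$.

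First I apply Lemma \ref{distance boundary} to each region separately: since $U^{\pm}$ has size $D_1$ and contains the geodesic $\alpha^{\pm}$ with $L_{T^{\pm}}(\alpha^{\pm})\le D_0$, we obtain
\[
d_Q\pa{\partial_{X^{\pm}}\mc{CC}(Q),U^{\pm}}\le F\pa{d_\T(X^{\pm},T^{\pm})}\le F(d).
\]
Because $\text{\rm diam}(U^{\pm})\le 2D_1$, this confines $U^{\pm}$ to the $(F(d)+2D_1)$-neighborhood of $\partial_{X^{\pm}}\mc{CC}(Q)$. In the opposite direction, Corollary \ref{wide cores} combined with the definition of $G$ upgrades the hypothesis $h\ge G(d)$ to
\[
d_Q\pa{\partial_{X^-}\mc{CC}(Q),\partial_{X^+}\mc{CC}(Q)}\ge\rho(h)>2F(d)+4D_1.
\]
The triangle inequality then yields $d_Q(U^-,U^+)>0$, so $U^-\cap U^+=\emptyset$.

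For the topological conclusion I would use that $Q\cong\Sigma\times\mb{R}$ and that each $U^{\pm}\simeq\Sigma\times\qa{0,1}$ is two-sided with $\pi_1$-injective inclusion, so each boundary component of $U^{\pm}$ is isotopic to a level surface of the product structure. The distance estimates above place $U^-$ and $U^+$ on opposite ends of $Q$, namely on the $X^-$ and $X^+$ sides of the midsection of the convex core. Consequently $Q\setminus(U^-\cup U^+)$ has three components, and the closure of the middle one is a submanifold $Q^0\subset Q$ homeomorphic to $\Sigma\times\qa{0,1}$, with $U^-$ and $U^+$ providing collars of the two components of $\partial Q^0$.

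The main subtlety I anticipate is verifying that $U^-$ and $U^+$ really do lie on opposite ends of $Q$ rather than accidentally on the same end; this is precisely what the strict inequality $\rho(h)>2F(d)+4D_1$ buys us, since the two regions are trapped in disjoint neighborhoods of $\partial_{X^-}\mc{CC}(Q)$ and $\partial_{X^+}\mc{CC}(Q)$, which themselves are separated by a large distance inside $\mc{CC}(Q)$. Once this separation is secured, the rest reduces to a routine application of the product structure of $\Sigma\times\mb{R}$.
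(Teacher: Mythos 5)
Your proof is correct and follows essentially the same route as the paper: Lemma \ref{distance boundary} confines each $U^{\pm}$ near $\partial_{X^{\pm}}\mc{CC}(Q)$, Corollary \ref{wide cores} together with the definition of $G$ forces $\rho(h)>2F(d)+4D_1$, and the separation follows by comparing distances; the paper's proof is exactly this computation, stated more tersely. Your added remarks on the topological conclusion (two-sidedness, isotopy to level surfaces, the middle complementary component) are a reasonable fleshing-out of what the paper leaves implicit.
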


\begin{proof}
We have $d_Q(\partial_{X^-}\mc{CC}(Q),\partial_{X^+}\mc{CC}(Q))\ge\rho(h)$ and $d_Q(\partial_{X^\pm}\mc{CC}(Q),U^\pm)\le F(d)$. If $\rho(h)-F(d)-2D_1\ge F(d)+2D_1$, the product regions $U^-,U^+$ are separated. By definition of $G$, if $h>G(d)$, the previous inequality holds.
\end{proof}

Finally we take care of the volume. 

\begin{lem}
\label{volume difference}
If $X^-,X^+\in\T_\eta$, then there exists $V_0(D_1,\eta,d)$ such that
\[
\left|\vol{Q}-\vol{Q^0}\right|\le V_0.
\]
\end{lem}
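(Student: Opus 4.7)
The plan is to decompose the complement $\mc{CC}(Q)\setminus Q^0$ into two end caps $W^-\sqcup W^+$, where $W^\pm$ is the closure of the component adjacent to $\partial_{X^\pm}\mc{CC}(Q)$ in the product decomposition of the convex core, and to bound each volume separately. Since the two arguments are symmetric, I treat only $W^-$.

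First, the geometry of $\partial_{X^-}\mc{CC}(Q)$ is well controlled: by Sullivan's theorem (Theorem II.2.3.1 of \cite{CEG}) this boundary surface is $K$-bilipschitz equivalent to $X^-\in\T_\eta$ through a homeomorphism in the homotopy class of the identity, so it sits in the $(\eta/K)$-thick part of moduli space and by Mumford compactness has diameter bounded by some $D_\partial=D_\partial(\eta,g)$; moreover its intrinsic metric is hyperbolic by Thurston's theorem, hence its area equals $2\pi|\chi(\Sigma)|$ by Gauss--Bonnet. Second, Lemma \ref{distance boundary} gives $d_Q(\partial_{X^-}\mc{CC}(Q),U^-)\le F(d)$ which, combined with $\mathrm{diam}(U^-)\le 2D_1$, yields $d_Q(\partial_{X^-}\mc{CC}(Q),\partial_-U^-)\le R:=F(d)+2D_1$. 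The core geometric step is then the containment $W^-\subset N_R(\partial_{X^-}\mc{CC}(Q))$, established by exploiting the convexity of $\partial\mc{CC}(Q)$ together with the product decomposition $\mc{CC}(Q)\cong\Sigma\times[-1,1]$ to rule out points of $W^-$ at greater distance from $\partial_{X^-}\mc{CC}(Q)$. With the containment in hand, the volume of the $R$-neighborhood of a surface of bounded area and bounded injectivity radius in a hyperbolic $3$-manifold is controlled by a standard comparison on the universal cover, giving $\vol{W^-}\le V(R,\eta,g)$; adding the symmetric bound for $\vol{W^+}$ yields $|\vol{Q}-\vol{Q^0}|\le V_0(D_1,\eta,d):=2V(R,\eta,g)$.

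The main obstacle is the containment $W^-\subset N_R(\partial_{X^-}\mc{CC}(Q))$: the topological product $W^-\cong\Sigma\times[0,1]$ could a priori bulge far from both boundary components, so this step is not purely formal. Making it rigorous requires using effectively the convexity of $\partial\mc{CC}(Q)$ and the product decomposition of the convex core, so that a shortest geodesic from any point of $W^-$ to $\partial_{X^-}\mc{CC}(Q)$ stays in $W^-$ and has length at most $R$; this is where the hypothesis $X^-\in\T_\eta$ is genuinely used, through the bounded geometry of the thick end furnished by Minsky's model theorem.
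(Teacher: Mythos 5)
Your overall strategy mirrors the paper's proof: isolate the two end caps of $\mc{CC}(Q)\setminus Q^0$, control the geometry of $\partial_{X^\pm}\mc{CC}(Q)$ via Sullivan's theorem and the $\eta$-thickness of $X^\pm$ (bounded diameter, bounded area), use Lemma \ref{distance boundary} together with $\mathrm{diam}(U^\pm)\le 2D_1$ to place the product regions at distance at most $F(d)+2D_1$ from the corresponding core boundary, and finish with a crude volume comparison. The paper bounds the diameter of each enclosed region and dominates its volume by that of a ball of the same radius in $\mb{H}^3$; your normal-neighborhood estimate over a surface of area $2\pi|\chi(\Sigma)|$ is an acceptable substitute for that last step.

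The problem is the step you yourself identify as the main obstacle: the containment $W^-\subset N_R\pa{\partial_{X^-}\mc{CC}(Q)}$ with $R=F(d)+2D_1$ is asserted, not proved, and the justification you sketch does not close it. What the quoted ingredients actually give is only that the \emph{boundary} $\partial W^-=\partial_{X^-}\mc{CC}(Q)\cup\partial_-U^-$ lies in $N_R\pa{\partial_{X^-}\mc{CC}(Q)}$; a compact region whose boundary lies in a bounded set can nevertheless have large diameter (for instance if a deep Margulis tube sits between the two surfaces), so confining $W^-$ itself requires a further geometric input. Convexity does not supply it: $\partial\mc{CC}(Q)$ is not convex, and the convexity of $\mc{CC}(Q)$ only guarantees that geodesics between points of the core stay in the core, not that they stay near its boundary. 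Moreover, your key sentence — that a shortest geodesic from any point of $W^-$ to $\partial_{X^-}\mc{CC}(Q)$ has length at most $R$ — is exactly the statement to be proved, so as written the argument is circular. To close the gap you must show that the region pinched between the two bounded-diameter surfaces cannot bulge, e.g.\ by excluding short geodesics of $Q$ whose tubes lie between $\partial_{X^-}\mc{CC}(Q)$ and $U^-$; this is precisely where $X^-\in\T_\eta$, the bound $d_\T(X^-,T)\le d$ and the bounded geometry of $U^-$ must genuinely be used (and is the content behind the paper's one-line diameter bound for the enclosed region). Until that argument is supplied, the proposal has a hole at its central step, even though its architecture and its dependence $V_0=V_0(D_1,\eta,d)$ are the right ones.
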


\begin{proof}
There is a uniform upper bound on the diameter of a $\eta$-thick hyperbolic surface. By Sullivan, the same holds for every component of $\partial\mc{CC}(Q)$. It follows that the diameter of the region enclosed by $U^-$ and $\partial_{X^-}\mc{CC}(Q)$ is uniformly bounded in terms of $D_1,\eta$. As an upper bound for its volume we can take the volume of a ball with the same radius in $\mb{H}^3$. 
\end{proof}

\subsection{A gluing theorem}
Recall that we fixed $D_1>0$ sufficiently large once and for all. The following is our first crucial technical tool.

\begin{figure}[H]
\begin{center}
\begin{overpic}{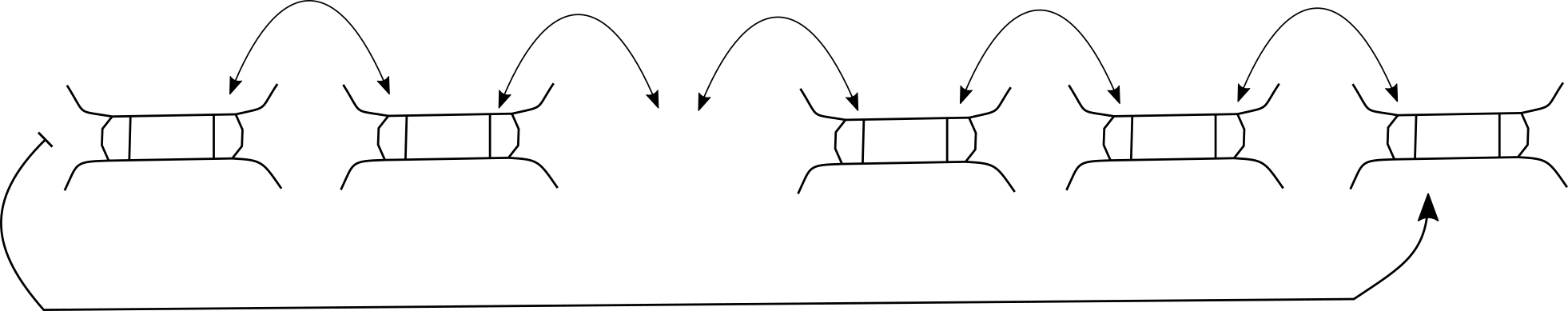}
\put (41.5,10) {$\dots$}
\put (9,6) {$Q_1$}
\put (27,6) {$Q_2$}
\put (56,6) {$Q_{r-1}$}
\put (73.5,6) {$Q_r$}
\put (93,6) {$\phi Q_1$}
\put (42,2) {$\phi$}
\end{overpic}
\label{fig:figure0}
\end{center}
\end{figure}

\begin{figure}[H]
\begin{center}
\begin{overpic}{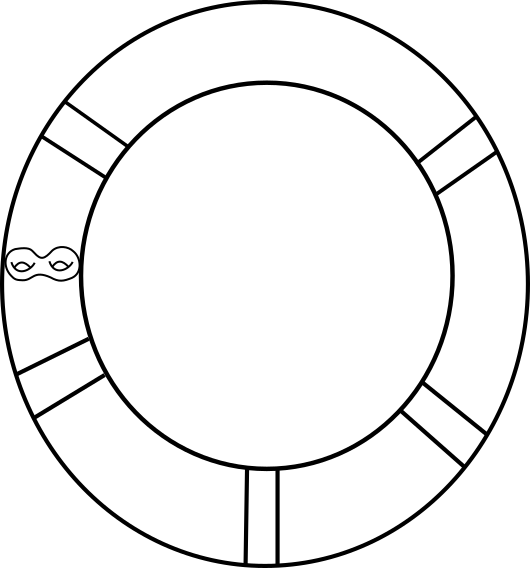}
\put (85,41) {$\vdots$}
\put (-33,12) {$X_\phi\simeq T_\phi$}
\end{overpic}
\caption{Gluing.}
\label{fig:figure1}
\end{center}
\end{figure}

\begin{pro}
\label{gluing}
Fix $\eta,\delta>0$ and $\xi\in(0,1)$. There exists $h_0(\eta,\xi,\delta)>0$ such that the following holds: Let $\ga{Q_i=Q(X_i^-,X_i^+)}_{i=1}^r$ be a family of quasi-fuchsian manifolds. Let $\ga{l_i:I_i\rightarrow\T}_{i=1}^r$ be the corresponding Teichm\"uller geodesics. Suppose that the following holds:
\begin{itemize}
\item{For every $i<r$, the geodesics $l_i,l_{i+1}$ $\delta$-fellow travel when restricted to $J_i^+\subset I_i$ and $J_{i+1}^-\subset I_{i+1}$. The segments $J_i^+$ and $J_{i+1}^-$ are respectively terminal and initial, have length $\left|J_i^-\right|$, $\left|J_i^+\right|\in\qa{h_0,2h_0}$ and are entirely contained in $\T_\eta$.}
\item{The $\eta$-height of $l_i$ is at least $G(2h_0)$ for all $i\le r$.}
\end{itemize}
Let $k_i:U_i^+\subset Q_i\rightarrow U_{i+1}^-\subset Q_{i+1}$ be the $\xi$-almost isometric embedding of product regions in the homotopy class of the identity for $i<r$  corresponding to the segments $J_i^+$, $J_{i+1}^-$ as in Proposition \ref{minsky model}. The product regions have size $D_1$ and are disjoint as in Lemma \ref{disjoint}. Let $Q_i^0$ be the region of $Q_i$ bounded by $\partial^-U_i^-$ and $\partial^+U_i^+$ for which $U_i^-$, $U_i^+$ are collars of the boundaries as in Lemma \ref{disjoint}. Then we can form
\[
X:=Q_1^0\cup_{k_1:U_1^+\rightarrow U_2^-}Q_2^0\cup\dots\cup Q_{r-1}^0\cup_{k_{r-1}:U_{r-1}^+\rightarrow U_r}Q_r^0
\]
using the cut and glue construction Lemma \ref{cut and glue}. The compact 3-manifold $X$ has the following properties:
\begin{itemize}
\item{Curvature: $\left|1+\text{\rm sec}_X\right|\le K\xi$ where $K=K(D_1)$ is as in Lemma \ref{bump function}.}
\item{The inclusions $Q_i^0\setminus\pa{U_i^-\cup U_i^+}\subset X$ are isometric.}
\item{Volume: There exists $V_0=V_0(\eta,\xi,D_1,h_0)$ such that
\[
\left|\vol{X}-\sum_{i<r}{\vol{Q_i}}\right|\le rV_0.
\]
}
\end{itemize}
Furthermore, let $\phi\in\mcg$ be a mapping class. Suppose it has the property that $\phi l_1$ and $l_r$ $\delta$-fellow travel along $J_1^-\subset I_1$ and $J_r^+\subset I_r$. Then there is a $\xi$-almost isometric embedding $k_r:U_r^+\subset Q_r\rightarrow U_1^-\subset Q_1$ in the homotopy class of $\phi$ and we can form the manifold
\[
X_\phi=X/\pa{k_r:U_r^+\subset Q_r^0\rightarrow U_1^-\subset Q_1^0}.
\]
Topologically $X_\phi$ is diffeomorphic to the mapping torus of $\phi$.
\end{pro}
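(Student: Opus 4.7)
The plan is a direct assembly of the three technical tools developed in the section: Proposition \ref{minsky model} to produce the matching product regions and the almost-isometric gluing maps, Lemma \ref{disjoint} to place them inside the single $Q_i$, and Lemma \ref{cut and glue} to patch everything into one smooth Riemannian manifold.

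First I would set $h_0:=h(\eta,\xi,\delta,D_1)$, where $h$ is the constant produced by Proposition \ref{minsky model} for the fixed size $D_1$. With this choice, for each $i<r$ the fellow-traveling subsegments $J_i^+\subset I_i$ and $J_{i+1}^-\subset I_{i+1}$ of length in $[h_0,2h_0]$, entirely contained in $\T_\eta$, supply product regions $U_i^+\subset Q_i$ and $U_{i+1}^-\subset Q_{i+1}$ of size $D_1$ together with a $\xi$-almost isometric embedding $k_i\colon U_i^+\to U_{i+1}^-$ in the homotopy class of the identity. Each product region contains a closed geodesic of moderate length that is also moderately short at the midpoint of the respective subsegment. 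This midpoint serves as the reference surface $T_i^\pm$ in Lemma \ref{disjoint}; since it lies on the Teichm\"uller geodesic $l_i$ at distance at most $2h_0$ from the endpoint $X_i^\pm$, the quantity $d$ appearing there is bounded by $2h_0$, and the hypothesis that the $\eta$-height of $l_i$ is at least $G(2h_0)$ is exactly what is needed to conclude that $U_i^-$ and $U_i^+$ are disjoint and cobound the region $Q_i^0\cong\Sigma\times\qa{0,1}$ in $Q_i$.

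With these pieces in place I would apply Lemma \ref{cut and glue} to each pair $(Q_i^0,Q_{i+1}^0)$ using $k_i$ and a bump function $\theta_i\colon U_i^+\to\qa{0,1}$ provided by Lemma \ref{bump function}, whose $\mc{C}^2$-norm is bounded by $K=K(D_1)$. Iterating the construction defines $X$; the curvature bound $|1+\text{\rm sec}_X|\le K\xi$ and the fact that the inclusions of $Q_i^0\setminus\pa{U_i^-\cup U_i^+}$ remain isometric are then immediate, since $\rho$ equals the hyperbolic metric outside the interpolation collars. For the volume estimate I would break the discrepancy into two contributions. Lemma \ref{volume difference}, applied with $d\le 2h_0$, yields $\pa{\vol{Q_i}-\vol{Q_i^0}}\le V_0'$ for a constant $V_0'=V_0'(\eta,D_1,h_0)$. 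On each interpolation collar $U_i^\pm$ of size $D_1$, Lemma \ref{injrad} produces a lower injectivity-radius bound and the last assertion of Lemma \ref{cut and glue} produces a $\rho$-diameter bound, giving a uniform bound $V_1=V_1(\eta,\xi,D_1)$ on the $\rho$-volume of each collar. Summing over the $r$ blocks and the $2r$ collars yields the telescoping estimate with $V_0:=V_0'+2V_1$, independent of $r$.

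For the mapping-torus version, the fellow-traveling hypothesis on $\phi l_1$ and $l_r$ along $J_1^-$ and $J_r^+$ lets me apply Proposition \ref{minsky model} once more, this time to the quasi-fuchsian pair $(Q_r,\phi Q_1)=\pa{Q_r,Q\pa{\phi X_1^-,\phi X_1^+}}$; pulling the resulting embedding back through the $\pi_1$-level identification between $\phi Q_1$ and $Q_1$ produces a $\xi$-almost isometric embedding $k_r\colon U_r^+\subset Q_r\to U_1^-\subset Q_1$ whose induced outer automorphism of $\pi_1(\Sigma)$ is $\phi$. Gluing $\partial^+Q_r^0$ to $\partial^-Q_1^0$ via $k_r$ realizes, at the topological level, the identification $\pa{x,0}\sim\pa{\phi x,1}$ defining the mapping torus, so $X_\phi$ is diffeomorphic to $T_\phi$. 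The only delicate point I anticipate is the volume bookkeeping: one must ensure that the additive errors from cutting off the caps of each $\mc{CC}(Q_i)$ and from interpolating on the $U_i^\pm$ are all uniform in $\eta,\xi,D_1,h_0$ and independent of $r$. This is where the bounded-geometry output of Proposition \ref{minsky model}, namely product regions of fixed size $D_1$ sitting in the $\eta$-thick part, is essential.
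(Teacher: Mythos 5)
Your proposal is correct and follows essentially the same route as the paper, which itself only sketches the argument: several applications of Proposition \ref{minsky model} with $h_0=h(\eta,\xi,\delta,D_1)$, separation via Lemma \ref{disjoint} (with the midpoint reference surface and $d\le 2h_0$), the cut-and-glue Lemma \ref{cut and glue} with the uniform bump functions of Lemma \ref{bump function}, volume bookkeeping via Lemma \ref{volume difference} plus the bounded diameter of the size-$D_1$ collars, and for $X_\phi$ the composition of the Minsky-model embedding for $(Q_r,\phi Q_1)$ with the isometric remarking $\phi Q_1\rightarrow Q_1$. Your write-up in fact supplies slightly more detail than the paper does, and I see no gap.
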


The $\xi$-almost isometric embedding $k_r$ is obtained as the composition of the one provided by Proposition \ref{minsky model} for the fellow traveling of $l_r$, $\phi l_1$ and the isometric remarking $\phi Q_1\rightarrow Q_1$ in the isotopy class of $\phi$ (see Figure \ref{fig:figure1}). 

Proposition \ref{gluing} follows directly from several applications of Proposition \ref{minsky model} and the cut and glue construction Lemma \ref{cut and glue} once we can ensure that the product regions are well separated as in Lemma \ref{disjoint}. Separation and volume bounds follow from the discussion in the previous section.

We remark that, by a celebrated theorem of Thurston \cite{Th}, if $\phi$ is a {\em pseudo-Anosov} mapping class, then the mapping torus $T_\phi$ admits a hyperbolic metric. A pseudo-Anosov element $\phi$ is one that acts as a hyperbolic isometry of Teichmüller space: It preserves a unique Teichmüller geodesic $l:\mb{R}\rightarrow\T$ on which it acts by translations $\phi l(t)=l(t+L(\phi))$. The quantity $L(\phi)>0$ is called the {\em translation length} of $\phi$ (see Chapter 13 of \cite{primer}).

\subsection{Comparing the volume}
The second fundamental ingredient is a volume comparison result. If we have two Riemannian metrics $g_0$ and $g$ on the same 3-manifold $M$ we can compare their volume using the method of {\em natural maps} introduced by Besson, Courtois and Gallot. We mainly refer to their work \cite{BCG98} as we use some consequences of it. Given a map $f:N\rightarrow M$ between Riemannian manifolds satisfying certain curvature conditions, the method produces families of natural maps $F:N\rightarrow M$ homotopic to $f$ and with Jacobian bounded in terms of the {\em volume entropies} of the manifolds. We need the following result:  

\begin{thm}[Besson-Courtois-Gallot \cite{BCG98}]
\label{bcg}
Let $(M,g)$ and $(M_0,g_0)$ be closed orientable Riemannian 3-manifolds such that there exists:
\begin{itemize}
\item{A lower bound for the Ricci curvature of the source $\text{\rm Ric}_g\ge -2g$.}
\item{A uniform bound for the sectional curvatures of the target $-k\le\text{\rm sec}_{g_0}\le-1$ for some $k\ge 1$.}
\end{itemize}
Then for every continuous map $f:M\longrightarrow M_0$ we have
\[
\vol{M}\ge\left|\text{\rm deg}(f)\right|\vol{M_0}.
\]
\end{thm}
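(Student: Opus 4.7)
The plan is to build the \emph{natural map} of Besson-Courtois-Gallot: a smooth $F:M\to M_0$ homotopic to $f$ whose Jacobian is pointwise bounded by $1$, and then apply the degree formula $|\deg(f)|\vol{M_0}=|\int_M F^*dv_{g_0}|\le\int_M|\mathrm{Jac}(F)|dv_g$. The whole argument is carried out on the universal covers and relies on a barycenter construction weighted by Patterson-Sullivan measures on $\partial\tilde M_0$.

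Setup. Lift $f$ to an equivariant map $\tilde f:\tilde M\to\tilde M_0$. Since $\mathrm{sec}_{g_0}\le-1$, the cover $\tilde M_0$ is Hadamard with well-defined Busemann functions $B(\cdot,\xi):\tilde M_0\to\mathbb{R}$ parametrized by $\xi\in\partial\tilde M_0$. A Patterson-Sullivan construction gives a $\pi_1(M_0)$-quasi-invariant probability measure class $\mu_0$ on $\partial\tilde M_0$ of dimension equal to the volume entropy $h_0:=h(g_0)$, with Radon-Nikodym cocycle $d\gamma_*\mu_0/d\mu_0=e^{-h_0 B(\cdot,\gamma^{-1}\cdot)}$. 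For each $x\in\tilde M$ one produces a finite equivariant family of measures $\nu_x$ on $\partial\tilde M_0$ by averaging on metric spheres of $\tilde M$ centered at $x$, pushing forward by $\tilde f$ and taking the visual limit weighted by $e^{-h(g)r}$.

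Barycenter and Jacobian bound. Set
\[
\mathcal{F}_x(y):=\int_{\partial\tilde M_0}e^{h_0 B(y,\xi)}\,d\nu_x(\xi).
\]
Strict convexity of $B(\cdot,\xi)$ along geodesics (since $\mathrm{sec}_{g_0}\le-1$) together with coercivity at infinity produces a unique minimizer $\tilde F(x)$; equivariance implies $\tilde F$ descends to $F:M\to M_0$, and the parametric family of measures interpolating between Dirac masses at $\tilde f(x)$ and $\nu_x$ realizes a homotopy from $f$ to $F$. Implicit differentiation of the critical equation $\int dB_\xi|_{\tilde F(x)}\,d\nu_x(\xi)=0$ and the Hessian lower bound $\mathrm{Hess}\,B_\xi\ge g_0-dB_\xi\otimes dB_\xi$ yield a matrix identity controlling $dF$. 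The symmetric Cauchy-Schwarz estimate central to \cite{BCG98} then gives, in dimension $3$, the pointwise inequality
\[
|\mathrm{Jac}(F)(x)|\le\left(\frac{h(g)}{h(g_0)}\right)^{3}.
\]

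Conclusion. Apply the standard entropy comparison theorems to bound the ratio. Bishop-Gromov applied to $\mathrm{Ric}_g\ge-2g$ in dimension $3$ gives $h(g)\le 2$, while $\mathrm{sec}_{g_0}\le-1$ and comparison with $\mathbb{H}^{3}$ gives $h(g_0)\ge 2$. Hence $|\mathrm{Jac}(F)|\le 1$ pointwise, which combined with the degree formula yields $|\deg(f)|\vol{M_0}\le\vol{M}$.

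Main obstacle. The technical heart is the Jacobian inequality, which rests on the delicate symmetric matrix Cauchy-Schwarz estimate of Besson-Courtois-Gallot; checking that the minimizer depends smoothly on $x$ and that the resulting map is genuinely homotopic to $f$ is mostly formal once one has the barycenter machinery, and the two entropy bounds are standard comparison theorems.
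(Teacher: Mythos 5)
The paper does not prove this statement at all: it is imported verbatim from Besson--Courtois--Gallot \cite{BCG98} (the ``real Schwarz lemma''), so the only question is whether your sketch of their argument is sound. Your overall route --- equivariant barycenter map built from boundary measures, homotopy to $f$, a pointwise Jacobian bound, then the degree formula together with Bishop and Günther comparison --- is indeed the BCG strategy.

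There is, however, one genuine gap as written: the pointwise bound $|\mathrm{Jac}(F)|\le (h(g)/h(g_0))^3$ is the estimate BCG prove when the target is a rank-one \emph{locally symmetric} space (their entropy-rigidity theorem); for a target that is merely negatively curved, $-k\le \mathrm{sec}_{g_0}\le -1$, no such bound with $h(g_0)$ in the denominator is established in \cite{BCG98}, and it is not available in this generality. What the real Schwarz lemma actually uses is the estimate $|\mathrm{Jac}(F_c)|\le (c/(n-1))^n$ for the natural map $F_c$ built with any parameter $c>h(g)$, the constant $n-1$ coming from the Busemann Hessian comparison $\mathrm{Hess}\,B_\xi\ge g_0-dB_\xi\otimes dB_\xi$ (whose trace is $n-1$), not from the volume entropy of the target. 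The repair is immediate and in fact shortens your conclusion: in dimension $3$, Bishop gives $h(g)\le 2$, so taking $c\downarrow h(g)$ yields $|\mathrm{Jac}(F_c)|\le (c/2)^3$ arbitrarily close to $1$, and the degree formula gives $\vol{M}\ge |\mathrm{deg}(f)|\vol{M_0}$ in the limit; the Günther bound $h(g_0)\ge 2$ is then not needed. Two smaller points: the measures $\nu_x$ should be defined with weight $e^{-c\,d(x,z)}$ for $c>h(g)$ (at the critical exponent the defining integral need not converge), pushed forward by $\tilde f$ and convolved with the visual/Patterson--Sullivan measures of the target at the points $\tilde f(z)$ --- a ``visual limit'' of pushed-forward sphere averages under a map that is merely continuous is not well defined --- and your critical-point equation $\int dB_\xi|_{\tilde F(x)}\,d\nu_x(\xi)=0$ is the Euler--Lagrange equation for minimizing $\int B(\cdot,\xi)\,d\nu_x(\xi)$, not for the functional $\int e^{h_0B(\cdot,\xi)}\,d\nu_x(\xi)$ you wrote; either convention works, but they should be used consistently when differentiating to get the Jacobian estimate.
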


We now describe some applications. 

The first one is to the models constructed in Proposition \ref{gluing}:

\begin{cor}
\label{special case}
If $\phi$ is a pseudo-Anosov mapping class, $X_\phi$ is as in Proposition \ref{gluing} and $K\xi<1$ then 
\[
(1-K\xi)^{-3/2}\text{\rm vol}\pa{X_\phi}\le\text{\rm vol}\pa{M_\phi}\le(1+K\xi)^{3/2}\text{\rm vol}\pa{X_\phi}.
\] 
\end{cor}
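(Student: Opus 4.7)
The plan is to invoke Besson–Courtois–Gallot (Theorem \ref{bcg}) twice, once in each direction, comparing the smooth mapping torus $X_\phi$ produced by Proposition \ref{gluing} with the hyperbolic mapping torus $M_\phi$ (which exists by Thurston \cite{Th} since $\phi$ is pseudo-Anosov). Both manifolds are diffeomorphic to the topological mapping torus of $\phi$, and a compatible choice of orientations makes the canonical identification $X_\phi\cong M_\phi$ a degree-one map. The curvature bound $\left|1+\text{sec}_{X_\phi}\right|\le K\xi$ from Proposition \ref{gluing} does not directly match the one-sided source and target hypotheses of Theorem \ref{bcg}, but each inequality will become available after rescaling the metric on $X_\phi$ by an explicit constant close to $1$.

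For the upper bound on $\vol{M_\phi}$ I take $X_\phi$ as the source. The lower curvature bound $\text{sec}_{X_\phi}\ge-(1+K\xi)$ yields $\text{Ric}\ge-2(1+K\xi)g_X$, and rescaling the metric by $\lambda^2=1+K\xi$ upgrades this to $\text{Ric}\ge-2\tilde g_X$, at the price of multiplying the volume by $\lambda^3=(1+K\xi)^{3/2}$. The hyperbolic metric on $M_\phi$ trivially satisfies the target hypothesis $-1\le\text{sec}\le-1$. Theorem \ref{bcg} applied to the degree-one identity map $X_\phi\to M_\phi$ then gives $(1+K\xi)^{3/2}\vol{X_\phi}\ge\vol{M_\phi}$.

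For the lower bound I swap the two roles. The hyperbolic metric on $M_\phi$ has $\text{Ric}=-2g$ and is a valid source. To view $X_\phi$ as a valid target I need $\text{sec}\le-1$; the upper bound $\text{sec}_{X_\phi}\le-(1-K\xi)$ is negative thanks to the hypothesis $K\xi<1$, and becomes $\text{sec}\le-1$ after rescaling the metric by $\mu^2=1-K\xi$. The lower curvature bound becomes $\text{sec}\ge-(1+K\xi)/(1-K\xi)$, supplying the constant $k$ required by Theorem \ref{bcg}, while the volume becomes $(1-K\xi)^{3/2}\vol{X_\phi}$. BCG then yields $\vol{M_\phi}\ge(1-K\xi)^{3/2}\vol{X_\phi}$, which is the content of the lower bound in the statement.

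No real difficulty is present once Theorem \ref{bcg} is granted: the only items needing attention are the two rescaling factors, which are forced by the requirement to translate the two-sided pinched curvature of $X_\phi$ into the one-sided source and target hypotheses of BCG, and the fact that the diffeomorphism $X_\phi\cong M_\phi$ is orientation-preserving and therefore of degree $+1$, which follows because both manifolds inherit orientations from their common presentation as mapping tori of $\phi$.
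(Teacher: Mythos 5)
Your argument is exactly the paper's: apply Theorem \ref{bcg} to the identity map in both directions, rescaling the metric of $X_\phi$ by $1+K\xi$ when it serves as the source (to get $\text{\rm Ric}\ge-2g$) and by $1-K\xi$ when it serves as the target (to get $\text{\rm sec}\le-1$), and your rescaling constants and degree-one considerations are correct. One caveat: this yields $(1-K\xi)^{3/2}\vol{X_\phi}\le\vol{M_\phi}\le(1+K\xi)^{3/2}\vol{X_\phi}$, so your lower bound is not literally the printed one, in which the factor $(1-K\xi)^{-3/2}$ sits on $\vol{X_\phi}$; this is evidently a typo in the statement (the intended inequality is the one you proved, equivalently $\vol{X_\phi}\le(1-K\xi)^{-3/2}\vol{M_\phi}$), and you should flag that discrepancy rather than asserting that your bound is the stated one, since the literally stronger inequality does not follow from Theorem \ref{bcg} by any choice of rescaling.
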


\begin{proof}
The mapping torus of $\phi$ admits a purely hyperbolic Riemannian metric and the metric $X_\phi$ with $\text{\rm sec}_{X_\phi}\in\pa{-1-K\xi,-1+K\xi}$. We apply Theorem \ref{bcg} to the identity map in both directions after suitably rescaling the metric on $X_\phi$ so that it fulfills the Ricci and sectional curvature bounds.  
\end{proof}

The second application is a construction of a very peculiar model of a mapping torus $T_\phi$ of a pseudo-Anosov diffeomorphism $\phi$. Recall that $\phi$ acts on its axis by translating points by $L(\phi)$.

\begin{figure}[H]
\begin{center}
\begin{overpic}{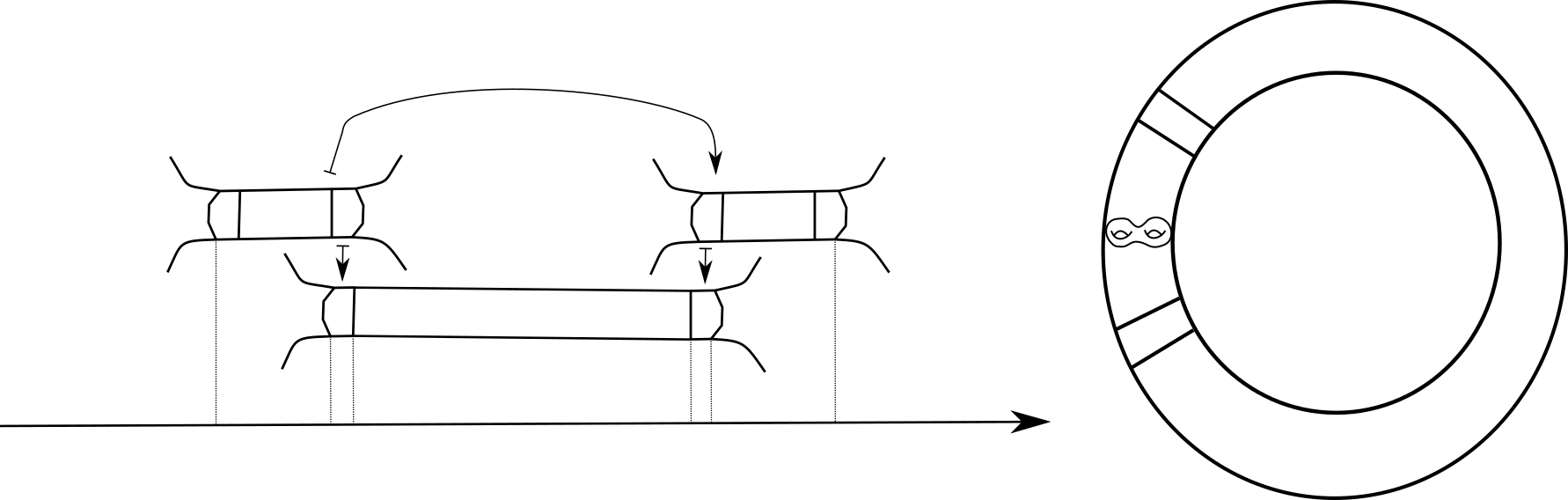}
\put (1,8) {$l_\phi$}
\put (3,1) {$c-L(\phi)$}
\put (19,1) {$a$}
\put (8,23) {$\phi^{-1}Q_2$}
\put (52,23) {$Q_2$}
\put (33,28) {$\phi$}
\put (33,15) {$Q_1$}
\put (23,1) {$b$}
\put (42,1) {$c$}
\put (46,1) {$d$}
\put (52,1) {$b+L(\phi)$}
\put (96,1) {$T_\phi$}
\end{overpic}
\end{center}
\caption{Model for a mapping torus.}
\label{fig:figure7}
\end{figure}

\begin{cor}
\label{qf vs mt}
Fix $\eta>0$ and $\xi\in(0,1)$. There exists $h(\xi,\eta)>0$ such that the following holds: Let $\phi$ be a pseudo-Anosov with axis $l:\mb{R}\rightarrow\T$. Suppose that there are disjoint intervals $I=\qa{a,b}$ and $J=\qa{c,d}$ with $a<b<c<d<a+L(\phi)$ such that $l(I),l(J)\subset\T_\eta$ and $\left|I\right|$, $\left|J\right|\ge h$. Then
\[
\left|\vol{T_\phi}-\vol{Q(l(a),l(d))}\right|\le\kappa(L(\phi)+b-c)+\xi\kappa(d-a)+\text{\rm const}
\]
where $\text{\rm const}$ depends only on $\eta,\xi,h,D_1$.
\end{cor}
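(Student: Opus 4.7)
My plan is to build a model $X_\phi$ of $T_\phi$ by applying the gluing construction of Proposition \ref{gluing} with just $r=2$ quasi-fuchsian blocks, where the second block covers the ``complementary'' arc of the axis beyond $[a,d]$. Set
\[
Q_1:=Q(l(a),l(d)),\qquad Q_2:=Q(l(c),l(b+L(\phi))),
\]
so that the associated Teichm\"uller geodesics are the axis subarcs $l_1=l|_{[a,d]}$ and $l_2=l|_{[c,b+L(\phi)]}$. Being subarcs of the common axis $l$, they trivially fellow travel on $J=[c,d]\subset\T_\eta$, which is terminal in $l_1$ and initial in $l_2$. For the $\phi$-wraparound, the translate $\phi l_1=l|_{[a+L(\phi),d+L(\phi)]}$ overlaps $l_2$ on $[a+L(\phi),b+L(\phi)]$, a terminal subsegment of $l_2$ which corresponds under the shift by $L(\phi)$ to the initial subsegment $I=[a,b]$ of $l_1$; this overlap lies in $\T_\eta$ by $\phi$-invariance of the thick part. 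Fixing any $\delta>0$ and choosing $h=h(\eta,\xi)$ large enough that $h\ge h_0(\eta,\xi,\delta)$ and that the $\eta$-heights of $l_1$ and $l_2$, both bounded below by $h$, exceed $G(2h_0)$, and truncating $I$ and $J$ to subsegments of length in $[h_0,2h_0]$, every hypothesis of Proposition \ref{gluing} is met.

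Proposition \ref{gluing} then produces a closed 3-manifold $X_\phi$ diffeomorphic to $T_\phi$, carrying a metric of sectional curvature in $(-1-K\xi,-1+K\xi)$ and satisfying
\[
\bigl|\vol{X_\phi}-\vol{Q_1}-\vol{Q_2}\bigr|\le C_1,
\]
for some $C_1=C_1(\eta,\xi,h,D_1)$. Since $T_\phi$ carries a genuine hyperbolic metric, Corollary \ref{special case} applied to the identity map between the two metric structures on $T_\phi\cong X_\phi$ gives
\[
\bigl|\vol{T_\phi}-\vol{X_\phi}\bigr|\le C_2\xi\,\vol{X_\phi}
\]
for a universal constant $C_2$. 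On the other hand, Proposition \ref{replacement} applied with the Fuchsian baseline $Q(X,X)$, whose two-dimensional convex core contributes zero three-volume, yields the universal upper bound $\vol{Q(X,Y)}\le\kappa\,d_\T(X,Y)+\kappa$. Applied along the arc length parametrized axis $l$ this gives
\[
\vol{Q_1}\le\kappa(d-a)+\kappa,\qquad \vol{Q_2}\le\kappa(L(\phi)+b-c)+\kappa.
\]

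Combining the three displays and absorbing the cross term $C_2\xi\kappa(L(\phi)+b-c)$ into $\kappa(L(\phi)+b-c)$ by enlarging $\kappa$ produces the claimed inequality
\[
\bigl|\vol{T_\phi}-\vol{Q_1}\bigr|\le \kappa(L(\phi)+b-c)+\xi\kappa(d-a)+\mathrm{const},
\]
with the constant depending only on $\eta,\xi,h,D_1$. I expect the heart of the argument to lie in the choice of the second block $Q_2$: its Teichm\"uller length $L(\phi)+b-c$ is precisely what produces the dominant term in the estimate, while the almost-isometric $\xi$-distortion of the model, fed through the BCG-type Corollary \ref{special case}, turns the controlled metric error into the secondary term $\xi\kappa(d-a)$; the rest of the proof is bookkeeping with Proposition \ref{replacement}.
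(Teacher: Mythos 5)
Your proposal is correct and follows essentially the same route as the paper: the same two blocks $Q_1=Q(l(a),l(d))$ and $Q_2=Q(l(c),l(b+L(\phi)))$ glued via Proposition \ref{gluing} along the overlaps $[c,d]$ and $\phi[a,b]$, compared to the hyperbolic metric on $T_\phi$ via Corollary \ref{special case}, with $\vol{Q_1},\vol{Q_2}$ bounded by Proposition \ref{replacement}. The only cosmetic difference is your absorption of the cross term by enlarging $\kappa$; the paper instead implicitly runs the gluing with a rescaled $\xi$ so the coefficients come out exactly as stated, a harmless bookkeeping adjustment.
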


\begin{proof}
Let $h_0(\eta,\xi,0)$ be as in Proposition \ref{gluing}. If $h\ge\max\{h_0,G(2h_0)\}$ is large enough, then the quasi-fuchsian manifolds (see Figure \ref{fig:figure7})
\[
\ga{Q_1=Q(l(a),l(d)),Q_2=Q(l(c),l(L(\phi)+b))}
\]
satisfy the assumption of Proposition \ref{gluing}. Moreover $\phi Q_1=Q(l(a+L(\phi),d+L(\phi))$ and the segments $\qa{l(c),l(b+L(\phi))}$ and $\qa{l(a+L(\phi)),l(d+L(\phi))}$ overlap along $\qa{l(a+L(\phi)),l(b+L(\phi))}=\phi\qa{l(a),l(b)}$. The upper bound for the volume is just an application of Proposition \ref{replacement}
\begin{flalign*}
&\left|\vol{T_\phi}-\vol{Q(l(a),l(d))}\right|\\
&\le\vol{Q(l(c),l(L(\phi)+b))}+2V_0+\xi\vol{Q(l(a),l(d))}\\
&\le\kappa(L(\phi)+b-c)+\xi\kappa(d-a)+2V_0+2\kappa.
\end{flalign*} 
\end{proof}

Using this estimates we recover the following well-known result (see for example \cite{BB}, \cite{KoMc}):

\begin{mypro}[3]
Let $\phi$ be a pseudo-Anosov mapping class. Then for every $o\in\T$ we have 
\[
\lim{\frac{\vol{Q(o,\phi^no)}}{n}}=\vol{T_\phi}.
\]
\end{mypro}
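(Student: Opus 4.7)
The plan is to reduce the problem to applying Corollary \ref{qf vs mt} not to $\phi$ itself but to the iterates $\phi^n$, each of which is pseudo-Anosov with the same axis as $\phi$ and translation length $L(\phi^n)=nL(\phi)$. Combined with the fact that $T_{\phi^n}$ is an $n$-fold cyclic cover of $T_\phi$, so that $\vol{T_{\phi^n}}=n\vol{T_\phi}$, this will give the exact linear asymptotic. For a preliminary reduction, let $l:\mb{R}\to\T$ denote the axis of $\phi$ and $p:=l(0)$; since $\phi$ acts isometrically on $(\T,d_\T)$, Proposition \ref{replacement} yields
\[
|\vol{Q(o,\phi^no)}-\vol{Q(p,\phi^np)}|\le 2\kappa d_\T(o,p)+\kappa,
\]
which is bounded independently of $n$, so we may assume $o=p$.

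Next I fix $\eta>0$ so that $l(\mb{R})\subset\T_\eta$: the systole function on $\T$ is continuous and positive, and the relation $\phi l(t)=l(t+L(\phi))$ makes $\text{sys}\circ l$ a positive continuous $L(\phi)$-periodic function, hence bounded below by some $\eta>0$. Given an arbitrary $\xi\in(0,1)$, let $h=h(\xi,\eta)$ be as in Corollary \ref{qf vs mt}, and pick $n$ large enough that $nL(\phi)>2h$. Applying Corollary \ref{qf vs mt} to $\phi^n$ with $a=0$, $b=h$, $c=nL(\phi)-h$, $d=nL(\phi)$ (both intervals lie on the axis and have length $h$) yields
\[
|\vol{T_{\phi^n}}-\vol{Q(l(0),l(nL(\phi)))}|\le\kappa(nL(\phi)+b-c)+\xi\kappa(d-a)+C,
\]
where $L(\phi^n)+b-c=2h$, $d-a=nL(\phi)$, and $C=C(\eta,\xi,h,D_1)$ is independent of $n$.

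Using $\vol{T_{\phi^n}}=n\vol{T_\phi}$ and $\phi^np=l(nL(\phi))$, division by $n$ gives
\[
\left|\vol{T_\phi}-\frac{\vol{Q(p,\phi^np)}}{n}\right|\le\frac{2\kappa h+C}{n}+\xi\kappa L(\phi).
\]
Sending $n\to\infty$ and then $\xi\to 0$ forces the limit to equal $\vol{T_\phi}$. The delicate point is the summand $\xi\kappa(d-a)$ in Corollary \ref{qf vs mt}, which is genuinely linear in $n$; the argument succeeds only because $\xi$ may be chosen arbitrarily small \emph{independently} of $n$, at the price of enlarging $h$, which affects only the $O(1/n)$ part of the error. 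The strict inequality $d<a+L(\phi^n)$ in the hypothesis of Corollary \ref{qf vs mt} is handled by a negligible shift of $d$ together with one extra application of Proposition \ref{replacement}.
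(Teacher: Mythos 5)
Your argument is correct and follows essentially the same route as the paper's own proof: fix a thickness constant for the axis, apply Corollary \ref{qf vs mt} to the iterate $\phi^n$ on intervals of fixed length $h$ at the two ends of a fundamental segment of length $nL(\phi)$, use $\vol{T_{\phi^n}}=n\vol{T_\phi}$, divide by $n$, and let $\xi\to 0$, with the reduction to a basepoint on the axis handled by Proposition \ref{replacement} and the isometric action of $\phi$. The only differences are presentational (you make the covering-space identity and the endpoint condition $d<a+L(\phi^n)$ explicit, which the paper glosses over), so nothing further is needed.
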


\begin{proof}
There exists $\eta_\phi>0$ such that $l_\phi:\mb{R}\rightarrow\T$, the Teichmüller axis of $\phi$, lies in $\T_{\eta_\phi}$. Fix $\xi>0$ and consider $h=h(\eta_\phi,\xi)$. For $n$ large enough the intervals $I=\qa{0,h}$ and $J=\qa{nL(\phi)-h,nL(\phi)}$ fulfill the assumption of Corollary \ref{qf vs mt} with respect to $\phi^n$. Hence, for all large $n$, $\left|\vol{Q(l_\phi(0),l_\phi(nL(\phi)))}-n\vol{T_\phi}\right|\le\kappa 2h+\xi\kappa nL(\phi)+\text{\rm const}$.
Observe that $l_\phi(nL(\phi))=\phi^n l_\phi(0)$. Denote $l_\phi(0)$ by $o_1$. Dividing by $n\vol{T_\phi}$ and passing to the limit we get 
\[
1-\xi\kappa L(\phi)\le\liminf\frac{\vol{Q(o_1,\phi^no_1)}}{n\vol{T_\phi}}\le\limsup\frac{\vol{Q(o_1,\phi^no_1)}}{n\vol{T_\phi}}\le 1+\xi\kappa L(\phi).
\]
As $\xi$ is arbitrary, the claim for $o_1$ follows. For a general $o$, it suffices to notice that, by Proposition \ref{replacement}, the difference $\left|\vol{Q(o,\phi^no)}-\vol{Q(o_1,\phi^no_1)}\right|$ is uniformly bounded by $\kappa(d_\T(o,o_1)+d_\T(\phi^no,\phi^no_1))+\kappa=2\kappa d_\T(o,o_1)+\kappa$.
\end{proof}

We remark that the results mentioned above \cite{BB}, \cite{KoMc} prove something stronger, that is $\left|2n\vol{T_\phi}-\vol{Q(\phi^{-n}o,\phi^no)}\right|=O(1)$.

\section{Random Walks}
\label{random walks}

We start talking about random walks on the mapping class group. We set up terminology, notations and first observations. The goal of the section is to introduce the third and last major technical tool of the paper which is a recurrence property (Proposition \ref{recurrence}).

\subsection{Random walks on the mapping class group}
We will work in the following generalities:

\begin{center}
\label{notation2}
\begin{minipage}{.8\linewidth}
\textbf{Standing assumption}. Let $S\subset\mcg$ be a finite symmetric set $S=S^{-1}$ generating the group $G=\langle S\rangle$. Let $\mu$ be a probability measure whose support equals $S$. We only consider random walks driven by probability measures arising this way with $G=\text{\rm Mod}(\Sigma)$.
\end{minipage}
\end{center}

Let us start with the most basic definition:

\begin{dfn}[Random Walk]
A \emph{random walk on $G$ driven by $\mu$} is given by the following data: Let $\ga{s_n}_{n\in\mb{N}}$ be a sequence of random variables with values in $S$ which are independent and have the same distribution $\mu$. The $n$-\emph{th step of the random walk} is the random variable $\omega_n:=s_1\dots s_n$. The random walk is the process $\omega:=\pa{\omega_n}_{n\in\mb{N}}$.

\begin{center}
\label{notation2}
\begin{minipage}{.8\linewidth}
\textbf{Notation}. We will always denote by $s=\pa{s_n}_{n\in\mb{N}}$ the sequence of labels and by $\omega=\pa{\omega_n:=s_1\dots s_n}_{n\in\mb{N}}$ the path traced by the sequence of labels.
\end{minipage}
\end{center}

The distribution of the $n$-th step of the random walk coincides with the $n$-th fold convolution $\mb{P}_n$ of $\mu$ with itself. It is given inductively by:
\[
\mb{P}_n\qa{E}:=\sum_{s\in S}{\mu(s)\mb{P}_{n-1}\qa{s^{-1}E}}.
\]
Let $\mc{P}$ be a property of mapping classes $f\in\mcg$. We call it {\em typical} if it is very likely that a random mapping class has it, that is
\[
\mb{P}_n\qa{f\in\mcg\left|\text{ $f$ has $\mc{P}$}\right.}\stackrel{n\rightarrow\infty}{\longrightarrow}1.
\] 
\end{dfn} 

The starting point of our discussion are two results by Maher \cite{Ma}, \cite{Ma10} that ensure that the property ``$X_f$ is hyperbolic'' is typical and hence it makes sense to consider the hyperbolic volume of $X_f$. 

\begin{dfn}[Sample Paths]
The {\em space of sample paths} is the measurable space $(\Omega,\mc{E})$ where $\Omega:=G^{\mb{N}}$ and $\mc{E}$ is the $\sigma$-algebra generated by the {\em cylinder sets}. Given a probability measure $\mu$ on $G$, we get a {\em probability measure} $\mb{P}$ on $\Omega$ induced by the random walk driven by $\mu$. It is the push-forward $\mb{P}:=T_*\mu^{\mb{N}}$ of the product measure $\mu^{\mb{N}}$ under the following measurable transformation:
\[
T:\Omega\rightarrow\Omega\quad\text{ defined by }\quad T(s)=\omega.
\]
\end{dfn}

\begin{dfn}[Shift Operator]
On the space of sample paths $\Omega$ there is a natural \emph{shift operator} $\sigma:\Omega\rightarrow\Omega$ defined by 
\[
\pa{\sigma\pa{s_i}_{i\in\mb{N}}}_j=s_{j+1}.
\]
If $\omega=T(s)=(\omega_n=s_1\dots s_n)_{n\in\mb{N}}\in\Omega$ is the path traced by a random walk, then we can write $(\sigma^i\omega)_j=\omega_i^{-1}\omega_{i+j}$. It is a standard computation on cylinder sets to check that $\sigma$ preserves $\mu^{\mb{N}}$ and that $(\Omega,\mu^{\mb{N}},\sigma)$ is mixing and hence {\em ergodic}.  
\end{dfn}

\subsection{Linear drift and sublinear tracking}
Consider the action on Teichm\"uller space $G\curvearrowright\T$ and fix a basepoint $o\in\T$. Every random walk $\omega=\pa{\omega_n}_{n\in\mb{N}}\in\Omega$ traces an orbit $\{\omega_no\}_{n\in\mb{N}}\subset\T$.

It follows from the triangle inequality that the random variables $d_{\T}(o,\omega_no)$ are subadditive with respect to the shift map $\sigma$. By Kingman's subadditive ergodic theorem and ergodicity of $(\Omega,\mb{P},\sigma)$, there exists a constant $L_\T\ge 0$, called the \emph{drift} of the random walk on Teichm\"uller space, such that for $\mb{P}$-almost every sample path $\omega\in\Omega$ we have
\[
\frac{d_\T\pa{o,\omega_no}}{n}\stackrel{n\rightarrow\infty}{\longrightarrow}L_\T.
\]

It is natural to ask whether the orbit $\{\omega_no\}_{n\in\mb{N}}$ converges to some point on the Thurston compactification of Teichm\"uller space $\mc{PML}$. This property was first established by Kaimanovich-Masur \cite{KM}.

\begin{thm}[Kaimanovich-Masur \cite{KM}]
\label{poisson}
We have $L_\T>0$. For $\mb{P}$-almost every sample path $\omega=(\omega_n)_{n\in\mb{N}}\in\Omega$ and for every basepoint $o\in\T$, the sequence $\{\omega_no\}_{n\in\mb{N}}$ converges to a point $\text{\rm bnd}(\omega)\in\mc{PML}$ which is independent of $o\in\T$. The map $\text{\rm bnd}:\Omega\rightarrow\mc{PML}$ is Borel measurable. Moreover, $\mb{P}$-almost surely, the point $\text{\rm bnd}(\omega)$ is uniquely ergodic, minimal and filling. 
\end{thm}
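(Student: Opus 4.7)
The plan is to prove Theorem \ref{poisson} via three intertwined ingredients: existence and structural properties of $\mu$-stationary Borel probability measures on $\mathcal{PML}$, Masur's criterion for unique ergodicity of vertical foliations, and a martingale convergence argument on $\Omega$.

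First, I would produce a $\mu$-stationary measure $\nu$ on $\mathcal{PML}$: since $\mathcal{PML}$ is compact and $\mcg$ acts continuously, a Markov--Kakutani / Krylov--Bogolyubov argument applied to the convolution operator $\nu\mapsto\mu\ast\nu$ gives a fixed point $\nu$. I would then show that $\nu$ is non-atomic. If it had atoms, the set of atoms of maximal mass would be $\mcg$-invariant and finite, contradicting the fact that the support $S$ of $\mu$ generates all of $\mcg$ and contains (in its semigroup) pseudo-Anosov elements whose pairs of fixed points on $\mathcal{PML}$ are not jointly preserved.

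The central step is to show that $\nu$ is concentrated on the set $\mathcal{UE}\subset\mathcal{PML}$ of minimal, filling, uniquely ergodic measured laminations. Here one uses Masur's geometric criterion: if a Teichm\"uller ray from $o$ with vertical foliation $\lambda$ recurs to some thick part $\T_\eta$, then $\lambda\in\mathcal{UE}$. I would combine this with a recurrence argument for the random walk $\omega_n o$ to the thick part (standard for non-elementary walks on $\mcg$) to conclude that $\nu(\mathcal{UE})=1$.

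For convergence, I would apply the martingale convergence theorem to the sequence of random measures $\omega_n\nu$: for every $\phi\in C(\mathcal{PML})$, $(\omega_n\nu)(\phi)$ is a bounded martingale with respect to the filtration generated by $(s_1,\dots,s_n)$, hence converges $\mb{P}$-a.s. This produces a limiting measure $\nu_\omega=\lim_n \omega_n\nu$ for a.e.\ $\omega\in\Omega$. Using the contraction dynamics of pseudo-Anosov elements on $\mathcal{PML}$ (nearby points collapse onto the attracting fixed lamination) together with ergodicity of the shift $\sigma$ on $(\Omega,\mb{P})$, one upgrades $\nu_\omega$ to a single Dirac mass almost surely; its atom is the measurable map $\text{\rm bnd}(\omega)\in\mathcal{UE}$, and independence of the basepoint $o\in\T$ follows from the fact that $\mcg$ acts cocompactly on $\T_\eta$. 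Finally, $L_\T>0$ follows from Kingman's subadditive ergodic theorem together with the observation that a typical sample path contains, with positive frequency, subwords representing pseudo-Anosov elements with a uniform lower bound on translation length, forcing $d_\T(o,\omega_n o)$ to grow at least linearly.

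The main obstacle will be the unique ergodicity step: showing $\nu(\mathcal{UE})=1$ requires both the recurrence of $\omega_n o$ to the thick part and a version of Masur's criterion robust enough to handle the random sequence of approximating axes rather than a single geodesic ray. This is the crux of \cite{KM} and is the technical heart of the argument, since the rest of the proof is a fairly formal consequence of the structure theorem for stationary measures on the boundary.
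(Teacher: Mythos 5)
You should note first that the paper does not prove this statement at all: Theorem \ref{poisson} is imported verbatim from Kaimanovich--Masur \cite{KM} and is used as a black box (together with Theorem \ref{tracking} and Theorem \ref{recurrence}), so the only meaningful comparison is with the argument of \cite{KM} itself. Measured against that, your outline reproduces the correct overall architecture -- existence of a $\mu$-stationary measure $\nu$ on $\mathcal{PML}$ by compactness, non-atomicity via the maximal-atom argument, concentration of $\nu$ on the uniquely ergodic laminations, martingale convergence of $\omega_n\nu$, and identification of the limit with a Dirac mass whose atom defines $\mathrm{bnd}(\omega)$ -- but two of your steps have genuine gaps as formulated.

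First, the step $\nu(\mathcal{UE})=1$. Masur's criterion concerns recurrence to $\mathcal{T}_\eta$ of the Teichm\"uller \emph{geodesic ray} in the direction $\lambda$, not of the orbit points $\omega_no$ of the walk; to transfer recurrence of $\{\omega_no\}$ into recurrence of a ray aimed at $\mathrm{bnd}(\omega)$ you would need geodesic tracking of sample paths, which (as in \cite{T15}) is established \emph{after}, and using, convergence to the boundary -- so your route is circular, and in any case it says something about the hitting measure of the walk rather than about an arbitrary stationary measure $\nu$, which is what the martingale step requires. Kaimanovich--Masur instead prove directly, purely on the level of stationary measures, that $\nu$ gives zero mass to non-minimal and non-filling laminations (an equivariant family of subsurface data handled by the same maximal-weight trick as the atoms) and to minimal non-uniquely-ergodic ones, before anything is said about trajectories. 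Second, positivity of the drift: your mechanism -- typical paths contain pseudo-Anosov subwords with translation length bounded below, ``forcing $d_{\mathcal{T}}(o,\omega_no)$ to grow linearly'' -- does not work, because progress made by a subword can be cancelled by subsequent increments; lower bounds on translation lengths of subwords give no lower bound on $d_{\mathcal{T}}(o,\omega_no)$. Kingman's theorem gives existence of $L_{\mathcal{T}}$, but its positivity is a separate theorem in \cite{KM}, proved using the boundary convergence and the non-degeneracy of the hitting measure on $\mathcal{UE}$, not by subword counting. Relatedly, the ``upgrade of $\nu_\omega$ to a Dirac mass via contraction of pseudo-Anosovs plus ergodicity of the shift'' is where the real geometric work of \cite{KM} sits: the contraction property one needs is that of uniquely ergodic \emph{limit points} (sequences in $\mathcal{T}$ converging to a uniquely ergodic $\lambda$ push non-atomic measures to $\delta_\lambda$), not the north--south dynamics of individual pseudo-Anosov elements, and invoking the latter does not by itself close the argument.
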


Moreover, Tiozzo \cite{T15} showed that the orbit $\{\omega_no\}_{n\in\mb{N}}$ can also be \emph{tracked} by a Teichm\"uller ray in the following sense:

\begin{thm}[Tiozzo \cite{T15}]
\label{tracking}
For $\mb{P}$-almost every sample path $\omega=(\omega_n)_{n\in\mb{N}}\in\Omega$ and for every basepoint $o\in\T$, there exists a unit speed Teichm\"uller ray $\tau:[0,+\infty)$ starting at $\tau(0)=o$ and ending at $\tau(\infty)=\text{\rm bnd}(\omega)$ such that
\[
\lim_{n\rightarrow\infty}\frac{d_\T\pa{\omega_no,\tau(L_\T n)}}{n}=0.
\]
\end{thm}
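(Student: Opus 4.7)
The plan is to combine Theorem \ref{poisson} with Masur's uniqueness theorem for Teichm\"uller rays limiting at uniquely ergodic laminations, and then to obtain the sublinear tracking via a Karlsson--Ledrappier style horofunction argument on $(\T,d_\T)$. First, by Theorem \ref{poisson} we restrict to the full-measure set of sample paths $\omega$ for which $\bnd{\omega}\in\mc{PML}$ is uniquely ergodic, minimal and filling and $\omega_no\to\bnd{\omega}$ in the Thurston compactification. Masur's theorem then provides a \emph{unique} unit-speed Teichm\"uller ray $\tau=\tau_\omega\colon[0,\infty)\to\T$ with $\tau(0)=o$ and $\tau(\infty)=\bnd{\omega}$; this will be the candidate ray.

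For the tracking, I would apply Karlsson--Ledrappier in the horofunction compactification of $(\T,d_\T)$ to produce, for $\mb{P}$-a.e.\ $\omega$, a horofunction $h_\omega$ with
\[
\lim_{n\to\infty}-\frac{h_\omega(\omega_no)}{n}=L_\T,
\]
and then identify $h_\omega$ with the Busemann function of $\tau_\omega$. At a uniquely ergodic point of $\mc{PML}$ the horofunction boundary of $(\T,d_\T)$, as described by Walsh, is particularly well-behaved: the horofunction ``centered'' at $\bnd{\omega}$ is unique and coincides with the Busemann function $h_\tau$ of $\tau_\omega$. Combined with $d_\T(o,\omega_no)/n\to L_\T$ and the $1$-Lipschitz property of $h_\tau$, this gives
\[
\frac{d_\T(o,\omega_no)+h_\tau(\omega_no)}{n}\longrightarrow 0.
\]
A geometric argument should then upgrade this ``radial'' closeness to metric tracking along $\tau$: since $\tau$ is the unique geodesic ray from $o$ to $\bnd{\omega}$, a point $x$ with $d_\T(o,x)+h_\tau(x)=o(d_\T(o,x))$ must satisfy $d_\T(x,\tau(d_\T(o,x)))=o(d_\T(o,x))$. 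Applying this to $x=\omega_no$ yields $d_\T(\omega_no,\tau(L_\T n))/n\to 0$.

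The main obstacle is precisely the upgrade from horofunction convergence to metric tracking along $\tau$. Because $(\T,d_\T)$ is neither Gromov hyperbolic nor $\mathrm{CAT}(0)$, horofunctions need not be Busemann functions of geodesics, and ``almost-gradient rays'' of a horofunction need not be unique. The essential input is Masur's work on the Teichm\"uller compactification at uniquely ergodic laminations, which forces enough rigidity for this upgrade to go through; unique ergodicity of $\bnd{\omega}$ is in fact used twice, both to produce $\tau$ and to pin down the asymptotic geometry of horofunctions at $\bnd{\omega}$.
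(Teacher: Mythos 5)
This statement is not proved in the paper at all: it is quoted as an external input from Tiozzo \cite{T15} (just as Theorem \ref{poisson} is quoted from Kaimanovich--Masur \cite{KM}), so there is no internal proof to compare against, and your attempt has to be measured against the actual content of Tiozzo's theorem. Your soft steps are fine: by Theorem \ref{poisson} the limit $\bnd{\omega}$ is almost surely uniquely ergodic, minimal and filling, Hubbard--Masur plus unique ergodicity gives a unique unit-speed Teichm\"uller ray from $o$ with vertical foliation in the class of $\bnd{\omega}$, and Masur's theorem shows this ray converges to $\bnd{\omega}$; also Karlsson--Ledrappier does produce, almost surely, a horofunction $h_\omega$ with $-h_\omega(\omega_no)/n\rightarrow L_\T$.

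The two steps after that are exactly where the theorem lives, and neither is established in your sketch. First, Karlsson--Ledrappier hands you \emph{some} horofunction arising as a limit of the orbit in the horofunction compactification of $(\T,d_\T)$; identifying it with the Busemann function of $\tau_\omega$ requires comparing the horofunction (Gardiner--Masur, by Liu--Su and Walsh) compactification with the Thurston compactification at uniquely ergodic points (work of Miyachi/Walsh), and you assert this identification rather than argue it. Second, and more seriously, the ``upgrade'' from $d_\T(o,\omega_no)+h_{\tau}(\omega_no)=o(n)$ to $d_\T(\omega_no,\tau(L_\T n))=o(n)$ is the decisive step, and you explicitly leave it as ``a geometric argument should then\dots''. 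Since $(\T,d_\T)$ is neither Gromov hyperbolic nor $\mathrm{CAT}(0)$, a small Busemann defect does not by itself force proximity to the ray; Masur's rigidity at uniquely ergodic points gives asymptoticity of \emph{geodesic rays} with the same vertical foliation, which says nothing quantitative about an arbitrary orbit point with small horofunction defect, possibly deep in the thin part. Any honest version of this step needs quantitative Teichm\"uller geometry of the kind this paper uses elsewhere --- recurrence of the tracking ray to the thick part (Theorem \ref{recurrence}, \cite{BGH16}) together with the contraction and fellow-traveling properties of thick Teichm\"uller geodesics \cite{M96}, \cite{R14}, \cite{H10}, as in Lemma \ref{closing} --- or the ergodic-theoretic ``linear progress at all scales'' estimates that Tiozzo actually exploits. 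So the proposal is a reasonable program in the spirit of Karlsson--Ledrappier/Karlsson--Margulis, but as written it contains a genuine gap at the central step rather than a proof.
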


\subsection{Recurrence}
Now we can present our last fundamental ingredient which is the following recurrence property:   

\begin{thm}[Baik-Gekhtman-Hamenst\"adt, Propositions 6.9 and 6.11 of \cite{BGH16}]
\label{recurrence}
Let $o\in\T$ be a basepoint and $\tau_\omega$ the tracking ray for $\omega$. Then:
\begin{itemize}
\item{{\rm Recurrence}: For every $\eta>0$ sufficiently small, for every $0<a<b$ and $h>0$, for $\mb{P}$-almost every $\omega$ with tracking ray $\tau_\omega$ there exists $N=N(\omega)>0$ such that for every $n\ge N$ the segment $\tau_\omega\qa{an,bn}$ has a connected subsegment of length $h$ entirely contained in $\T_\eta$.}
\item{{\rm Fellow-Traveling}: There exists $\delta>0$ such that for every $\ep>0$ and for $\mb{P}$-almost every sample path $\omega$ there exists $N=N(\omega)>0$ such that for every $n\ge N$, the element $\omega_n$ is pseudo-Anosov with translation length $L(\omega_n)\in\qa{(1-\ep)L_\T n,(1+\ep)L_\T n}$. Its axis $l_n$ $\delta$-fellow-travels the tracking ray $\tau_\omega$ on $\qa{\ep L_\T n,(1-\ep)L_\T n}$, i.e. for every $t\in\qa{\ep L_\T n,(1-\ep)L_\T n}$ we have $d_\T\pa{\tau_\omega(t),l_n}<\delta$.}
\end{itemize} 
\end{thm}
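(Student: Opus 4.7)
The plan is to derive both statements from structural facts about the tracking ray $\tau_\omega$ together with the ergodic theory of the Teichm\"uller geodesic flow on moduli space.

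For \textbf{recurrence}, the starting point is that, by Theorem~\ref{poisson}, $\mathbb{P}$-a.s.\ the boundary value $\text{bnd}(\omega)\in\mathcal{PML}$ is uniquely ergodic, minimal and filling, and by Theorem~\ref{tracking} the tracking ray $\tau_\omega$ is the Teichm\"uller ray from $o$ ending there. Masur's criterion then gives that the projection $\bar\tau_\omega$ of $\tau_\omega$ to $\mathcal{M}=\T/\mcg$ is recurrent to the $\eta$-thick part $\mathcal{M}_\eta$ for every sufficiently small $\eta>0$. To upgrade this qualitative recurrence to the quantitative statement that every window $\tau_\omega[an,bn]$ contains a connected thick subsegment of length $h$, I would appeal to ergodicity of the Teichm\"uller flow on the moduli space of unit area quadratic differentials with respect to the Masur-Veech measure $\mu_{MV}$. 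Since $\mu_{MV}$ is finite and assigns positive mass to the thick part, a typical geodesic ray spends a positive lower density of time in $\mathcal{M}_\eta$ by Birkhoff, and therefore contains arbitrarily long connected thick subsegments in any window of positive relative length. To apply this to the tracking ray, one needs that the hitting distribution of the random walk on $\mathcal{PML}$ is absolutely continuous, in the relevant transverse sense, with respect to $\mu_{MV}$; this comparison between the harmonic measure and the Masur-Veech measure is precisely the delicate input extracted from \cite{BGH16}.

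For \textbf{fellow-traveling}, the plan is a north-south dynamics argument combined with sublinear tracking. Along with $\omega$, consider the reversed walk $\check\omega$ driven by $\check\mu(s)=\mu(s^{-1})$. Theorem~\ref{poisson} applied to $\check\omega$ yields an a.s.\ uniquely ergodic boundary point $\xi^-$, independent of $\xi^+=\text{bnd}(\omega)$. For large $n$, the element $\omega_n$ carries a small neighborhood of $\xi^-$ close to $\xi^+$ in $\mathcal{PML}$, so it acts with north-south dynamics whose attracting/repelling fixed laminations $\lambda_n^\pm$ converge to $\xi^\pm$. In particular $\omega_n$ is pseudo-Anosov for all large $n$, and its Teichm\"uller axis $l_n$ joins $\lambda_n^-$ to $\lambda_n^+$. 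The translation-length estimate $L(\omega_n)\in[(1-\ep)L_\T n,(1+\ep)L_\T n]$ then follows from the displacement formula $d_\T(p,\omega_n p)=L(\omega_n)+2d_\T(p,l_n)$ at a basepoint $p$ close to $l_n$, combined with the drift identity $d_\T(o,\omega_n o)=L_\T n+o(n)$ from Theorem~\ref{poisson} and the endpoint convergence $\lambda_n^\pm\to\xi^\pm$ (which forces $l_n$ to come uniformly close to $o$). Finally, the $\delta$-fellow-traveling of $l_n$ with $\tau_\omega$ on $[\ep L_\T n,(1-\ep)L_\T n]$ comes from stability of Teichm\"uller geodesics in the thick part (in the spirit of Rafi or Masur-Minsky): two Teichm\"uller segments whose endpoint pairs lie in sufficiently small neighborhoods of $\xi^-$ and $\xi^+$ must synchronously fellow-travel, with constant depending only on the thickness parameter, and the recurrence to $\T_\eta$ established in the first part guarantees that this hypothesis is met throughout the relevant range.

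The \textbf{main obstacle} is the quantitative recurrence: upgrading Masur's criterion (qualitative infinite returns to $\mathcal{M}_\eta$) to a positive-density statement that holds in every window $[an,bn]$. This requires a genuine comparison between the random-walk harmonic measure on $\mathcal{PML}$ and the transverse conditionals of $\mu_{MV}$, which is the analytic heart of \cite{BGH16}. Once this is in place, the fellow-traveling half follows from more standard hyperbolic-style dynamics in Teichm\"uller space.
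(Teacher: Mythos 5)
This statement is not proved in the paper at all: it is imported verbatim from Baik--Gekhtman--Hamenst\"adt (Propositions 6.9 and 6.11 of \cite{BGH16}), so the only fair comparison is between your plan and the actual arguments of \cite{BGH16}. Your plan for the recurrence half has a genuine gap. You reduce the quantitative recurrence of $\tau_\omega[an,bn]$ to the thick part to an ``absolute continuity, in the relevant transverse sense, of the hitting measure with respect to $\mu_{MV}$''. That input is not merely delicate, it is false: for a random walk with finitely supported $\mu$ the harmonic (exit) measure on $\mc{PML}$ is \emph{singular} with respect to the Lebesgue/Masur--Veech measure class --- this is exactly the singularity result of Gadre--Maher--Tiozzo \cite{GMT12}, which the present paper itself cites in Section 6. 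So typical boundary points of the walk are atypical for the Teichm\"uller flow, and no Birkhoff-type statement for $\mu_{MV}$ can be transported to the tracking ray this way. The recurrence in \cite{BGH16} is instead proved by working directly with the random walk (Markov property, positive probability of prescribed ``good'' blocks whose associated Teichm\"uller segments are thick and contracting, exponential decay/Borel--Cantelli estimates), with no comparison of the harmonic measure to a flow-invariant measure. Masur's criterion, which you invoke as the starting point, only gives qualitative recurrence of the ray to $\mc{M}_\eta$ and says nothing about every linear window $[an,bn]$ containing an $h$-long thick subsegment.

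The fellow-traveling half of your plan is closer in spirit to what is actually done (north--south dynamics and convergence of the axis endpoints to the forward and backward exit points, plus $L(\omega_n)/n\rightarrow L_\T$ as in Dahmani--Horbez and Maher--Tiozzo), but two steps need repair. The displacement identity $d_\T(p,\omega_n p)=L(\omega_n)+2d_\T(p,l_n)$ is not an exact formula in the Teichm\"uller metric (Teichm\"uller space is neither CAT(0) nor Gromov hyperbolic); one only has coarse versions, valid near the thick part, obtained from contraction properties of thick geodesics. Similarly, ``endpoints close in $\mc{PML}$ implies synchronous fellow-traveling with uniform constant'' is false in general; fellow-traveling statements of Rafi type require thickness (cobounded or $\eta$-thick segments) along the relevant portions, so the uniform $\delta$ in the statement has to be extracted from the recurrence/contraction estimates themselves, not from boundary convergence alone. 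Since the recurrence input is exactly the part your plan cannot deliver, the proposal as written does not yield the theorem.
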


For the convergence $L(\omega_n)/n\rightarrow L_\T$ see also Dahmani-Horbez \cite{DH18}.

\subsection{A larger class of random walks}
As stated at the beginning of the section, in this paper we only work with probability measures $\mu$ with finite support $S$ that generates the full mapping class group $G=\text{\rm Mod}(\Sigma)$. This allows us to keep the statements uniform and to avoid distinguishing between different families of random 3-manifolds. 

However, at the price of making a distinction between mapping tori, quasi-fuchsian manifolds and Heegaard splittings, the assumptions on $\mu$ can be considerably relaxed and still obtain the convergence results in Theorems \ref{main1} and \ref{qf tracking}. We briefly describe, without details, two larger classes of random walks to which our results can be extended.

For mapping tori and quasi-fuchsian manifolds it is enough that $S$, the finite support of $\mu$, generates a subgroup $G$ containing two pseudo-Anosov elements that act as independent loxodromics on the {\em curve graph} (see \cite{MT14} for the definitions). All the theorems in this section hold in these generalities.

For Heegaard splittings, we further require that the two pseudo-Anosov elements also act as independent loxodromics on the {\em handlebody graph} (see \cite{MS18} for a definition). Crucially, the condition implies, by work of Maher-Schleimer \cite{MS18} and Maher-Tiozzo \cite{MT14}, that random walks on $G$ have a {\em positive drift} on the handlebody graph. This ensures that a random Heegaard splitting is hyperbolic and plays a role also in the construction of the model metric from \cite{HV} used in the next section. 

With these caveats, the proofs can be extended by following word-by-word the same lines, no change is needed.

\section{A Law of Large Numbers for the Volume}
\label{lln volume}

We are ready to prove the law of large numbers for the volumes of random 3-manifolds.

\begin{mythm}[1]
$\mb{P}$-almost surely the limit following limit exists
\[
\lim_{n\rightarrow\infty}{\frac{\vol{X_{\omega_n}}}{n}}=v.
\]
The family of 3-manifold $\ga{X_{\omega_n}}_{n\in\mb{N}}$ can denote either the mapping tori or the Heegaard splittings defined by $\omega_n$.
\end{mythm}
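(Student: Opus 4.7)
I would deduce Theorem 1 from Theorem \ref{qf tracking} by establishing the sublinear volume comparison
\[
\left|\vol{X_{\omega_n}}-\vol{\mc{CC}(Q(o,\omega_n o))}\right|=o(n)
\]
$\mb{P}$-almost surely, for both $X_{\omega_n}=T_{\omega_n}$ and $X_{\omega_n}=M_{\omega_n}$. Granted this bound, dividing by $n$ and applying Theorem \ref{qf tracking} produces the limit with the same constant $v=v(\mu)$. All the work is therefore in producing the $o(n)$ comparison between the random 3-manifold and the matching quasi-fuchsian manifold.

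For the mapping torus family the relevant estimate is essentially Corollary \ref{qf vs mt}. Theorem \ref{recurrence} guarantees that $\mb{P}$-almost surely, for all large $n$, $\omega_n$ is pseudo-Anosov with translation length $L(\omega_n)=(1+o(1))L_\T n$ and axis $l_n$ which $\delta$-fellow-travels the tracking ray $\tau_\omega$ on $\qa{\ep L_\T n,(1-\ep)L_\T n}$; recurrence further supplies pairs of disjoint $\eta$-thick subsegments of arbitrary fixed length $h$ inside this window. For fixed small $\xi,\ep>0$, choose $h=h(\xi,\eta)$ as in Corollary \ref{qf vs mt} and pick intervals $\qa{a,b},\qa{c,d}\subset\qa{0,L(\omega_n)}$ of length $h$ each, both inside an $\eta$-thick subsegment of $l_n$, with $d-a=(1-O(\ep))L(\omega_n)$. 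Corollary \ref{qf vs mt} then yields
\[
\left|\vol{T_{\omega_n}}-\vol{Q(l_n(a),l_n(d))}\right|\le O(\ep)\,L_\T n+\xi\kappa L_\T n+o(n),
\]
while Tiozzo's sublinear tracking (Theorem \ref{tracking}) combined with the fellow-traveling gives $d_\T(o,l_n(a))=O(\ep n)+o(n)$ and $d_\T(\omega_n o,l_n(d))=O(\ep n)+o(n)$, so Proposition \ref{replacement} absorbs the difference $\vol{Q(l_n(a),l_n(d))}-\vol{Q(o,\omega_n o)}$ into the same error budget. A diagonal argument letting $\ep,\xi\downarrow 0$ then finishes the mapping-torus case.

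For the Heegaard splitting family the analogous comparison follows by replacing Corollary \ref{qf vs mt} with a negatively curved \emph{model} metric on $M_{\omega_n}$ assembled from quasi-fuchsian pieces as in Hamenstädt-Viaggi \cite{HV}: the model agrees with a cut-and-glue assembly of convex cores (Proposition \ref{gluing}) outside two compact caps encoding the handlebody attachments, and these caps have uniformly bounded volume under the standing assumption on $\mu$, via the positive handlebody-graph drift provided by Maher-Schleimer \cite{MS18}. Theorem \ref{bcg} applied in both directions between the model and the genuine hyperbolic metric on $M_{\omega_n}$ delivers the same sublinear comparison, and the diagonal argument concludes as before. The main technical obstacle in both cases is to arrange that every error -- Teichmüller tracking, fellow-traveling, curvature perturbations of Lemma \ref{cut and glue}, Sullivan-type boundary distortion, and natural-map Jacobian estimates -- is simultaneously sublinear in $n$; this is precisely the role of the recurrence clause of Theorem \ref{recurrence}, which supplies thick subsegments of arbitrary prescribed length along almost every tracking ray.
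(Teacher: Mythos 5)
Your proposal is correct and follows essentially the same route as the paper: the paper likewise deduces Theorem 1 from Theorem \ref{qf tracking} via the sublinear comparison $|\vol{X_{\omega_n}}-\vol{Q(o,\omega_no)}|=o(n)$, proved for mapping tori exactly as you do (Corollary \ref{qf vs mt} plus recurrence, fellow-traveling and Tiozzo tracking, packaged there as Lemma \ref{interpolation}), and for Heegaard splittings via the Hamenstädt--Viaggi model and Theorem \ref{bcg}. The only slight inaccuracy is your claim that the handlebody caps have uniformly bounded volume: the paper (citing \cite{HV}, Proposition 7.1) only asserts $\vol{H_1\sqcup H_2\sqcup\Omega}\le\ep n$, which is weaker but still suffices for your argument.
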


We will deduce it from the following analogue concerning quasi-fuchsian manifolds. The idea is that, according to the geometric models, the volume of a random 3-manifold is always captured by a quasi-fucshian manifold.

\begin{mythm}[2]
For every $o\in\T$ and for $\mb{P}$-almost every $\omega\in\Omega$ the following limit exists:
\[
\lim_{n\rightarrow\infty}{\frac{\vol{Q(o,\omega_no)}}{n}}=v.
\]
\end{mythm}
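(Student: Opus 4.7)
I propose to prove Theorem 2 by decomposing the walk $\omega_{mN}$ into $m$ i.i.d.\ blocks of fixed length $N$, approximating $\vol{Q(o,\omega_{mN}o)}$ by an ergodic sum of block volumes via the gluing construction of Proposition \ref{gluing}, and comparing with the hyperbolic volume using the Besson-Courtois-Gallot natural maps.

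\textbf{Step 1 (block decomposition and integrability).} For a fixed $N\in\mb{N}$, write $\omega_{mN}=f_1\cdots f_m$ with $f_j:=s_{(j-1)N+1}\cdots s_{jN}$. The $f_j$ are i.i.d.\ copies of $\omega_N$, and by equivariance of Bers' parameterization, $Q\pa{\omega_{(j-1)N}o,\omega_{jN}o}$ is isometric to $Q(o,f_jo)$. Set $V_j:=\vol{Q(o,f_jo)}$. Since $\text{supp}(\mu)$ is finite, $d_\T(o,\omega_No)\le NM$ with $M:=\max_{s\in S}d_\T(o,so)$, so Proposition \ref{replacement} shows that the $V_j$ are integrable with $v_N:=\mb{E}\qa{V_j}<\infty$. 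The strong law of large numbers gives, $\mb{P}$-a.s.,
\[
\frac{1}{m}\sum_{j=1}^m V_j \longrightarrow v_N \quad \text{as } m\to\infty.
\]

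\textbf{Step 2 (geometric approximation).} Fix small $\xi\in(0,1)$ and $\eta>0$. Let $l_j$ be the Teichm\"uller geodesic joining $\omega_{(j-1)N}o$ and $\omega_{jN}o$. Combining Tiozzo's sublinear tracking (Theorem \ref{tracking}) with the recurrence and fellow-traveling statements of Proposition \ref{recurrence}, one verifies that for $\mb{P}$-a.e.\ $\omega$ and all $m$ large enough: each $l_j$ has $\eta$-height at least $G(2h_0)$, and consecutive $l_j,l_{j+1}$ $\delta$-fellow travel on a subsegment of length in $\qa{h_0,2h_0}$ contained in $\T_\eta$, where $h_0=h_0(\eta,\xi,\delta)$ is from Proposition \ref{gluing}. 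Proposition \ref{gluing} then produces a compact negatively curved model $X$, glued from the blocks $Q_j^0\subset Q\pa{\omega_{(j-1)N}o,\omega_{jN}o}$ and diffeomorphic to $\mc{CC}\pa{Q(o,\omega_{mN}o)}$, with $\left|1+\text{sec}_X\right|\le K\xi$ and
\[
\left|\vol{X}-\sum_{j=1}^m V_j\right|\le m\, V_0\pa{\eta,\xi,D_1,h_0}.
\]
Applying the BCG natural-map comparison of Theorem \ref{bcg} in both directions between $X$ and the hyperbolic structure on $\mc{CC}\pa{Q(o,\omega_{mN}o)}$, exactly as in Corollary \ref{special case} (adapted to the compact-core-with-boundary setting by doubling), yields
\[
(1+K\xi)^{-3/2}\vol{X}\le \vol{\mc{CC}\pa{Q(o,\omega_{mN}o)}}\le (1+K\xi)^{3/2}\vol{X}+O(1).
\]

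\textbf{Step 3 (conclusion).} Using $\vol{X}\le C m N$ (blockwise Proposition \ref{replacement}) and combining Steps 1 and 2, we obtain $\mb{P}$-a.s.
\[
\limsup_{m\to\infty}\left|\frac{\vol{Q(o,\omega_{mN}o)}}{mN}-\frac{v_N}{N}\right|\le C_1 \xi + \frac{V_0}{N}.
\]
Sending $\xi\downarrow 0$ and comparing two block sizes $N,N'$ along the common subsequence $m\cdot NN'$ forces $v_N/N$ to converge as $N\to\infty$ to a common value $v=v(\mu)$, and $\vol{Q(o,\omega_{mN}o)}/(mN)\to v$ a.s.\ for every $N$. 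For arbitrary $n$, write $n=mN+r$ with $0\le r<N$; Proposition \ref{replacement} bounds $\left|\vol{Q(o,\omega_no)}-\vol{Q(o,\omega_{mN}o)}\right|\le \kappa(NM+1)=o(n)$ for fixed $N$, yielding the $\mb{P}$-a.s.\ limit along the full sequence $n\to\infty$.

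\textbf{Main obstacle.} The technical heart of the argument is verifying the fellow-traveling hypothesis of Proposition \ref{gluing} in Step 2. Proposition \ref{recurrence} supplies $\delta$-fellow traveling between the pseudo-Anosov axes of $\omega_n$ and the tracking ray $\tau_\omega$, but we need this upgraded to fellow-traveling of the \emph{finite} geodesics $l_j,l_{j+1}$ on long $\eta$-thick subsegments near each junction $\omega_{jN}o$. This will require combining Tiozzo's sublinear tracking with uniform hyperbolicity of Teichm\"uller geodesics in $\T_\eta$ and the recurrence of $\tau_\omega$ to $\T_\eta$ on every interval of linear length. A secondary issue is extending the BCG comparison from closed mapping tori (Corollary \ref{special case}) to the compact core of a quasi-fuchsian manifold, which follows the doubling/compactification approach developed in \cite{HV}.
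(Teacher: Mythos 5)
Your overall architecture (block decomposition, gluing the blocks via Proposition \ref{gluing}, comparison by Besson--Courtois--Gallot, then an almost sure limit from an LLN/ergodic argument) matches the paper's, but Step 2 contains a genuine gap, and it is precisely the gap that the paper spends most of its proof repairing. For a \emph{fixed} block length $N$, the hypotheses you impose at each junction --- that the block geodesic $l_j$ has $\eta$-height at least $G(2h_0)$ and that $l_j,l_{j+1}$ $\delta$-fellow travel on an $\eta$-thick segment of length at least $h_0$ --- are events of probability strictly less than $1$ (with positive probability $\omega_N$ is trivial or very short), and since the blocks are i.i.d., almost surely a positive density $p_N>0$ of blocks violates them, no matter how large $m$ is. Theorem \ref{tracking} and Theorem \ref{recurrence} are asymptotic statements (valid for $n\ge N(\omega)$), so they cannot be applied per block at the fixed scale $N$; they only show that $p_N\to 0$ as $N\to\infty$. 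Consequently the glued model $X$ you invoke does not exist for a.e.\ $\omega$ and all large $m$: at a bad junction there is no almost-isometric embedding to glue with, and this is a failure of the construction, not merely an extra volume error term. The paper handles exactly this point: it introduces a set $\Omega_{\ep,N}$ of good times of measure $\ge 1-\zeta$, uses the ergodic theorem to control the density of bad indices, refines the interval subdivision so that after a bad stretch the fellow traveling with the tracking ray of the previous good block is restored (property (3) of good rays), discards the bad blocks at a volume cost of order $(\ep+\zeta)nN$, and only then applies Proposition \ref{gluing}; the limit is extracted by a limsup/liminf argument with $\ep,\zeta\to 0$, not by an exact gluing of all blocks. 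Relatedly, even on good stretches the paper glues quasi-fuchsian manifolds $Q(A_i,D_i)$ whose boundary data lie on the translated tracking rays $\omega_{iN}\tau_{\sigma^{iN}\omega}$, not the finite block geodesics $l_j$ themselves, because the available recurrence and fellow-traveling statements concern tracking rays rather than the geodesics joining consecutive orbit points.

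A second gap is the BCG step: Theorem \ref{bcg} is stated for closed manifolds, and your appeal to it ``in both directions'' between $X$ and $\mc{CC}(Q(o,\omega_{mN}o))$, with an unspecified doubling, is not justified --- the double of a convex core, or of your glued model, is not negatively curved across the boundary, and degree/volume comparison for manifolds with boundary requires genuinely new input. The paper avoids all boundary issues by closing the glued model up into the mapping torus $T_{\omega_{nN}}$ (using that $\omega_{nN}$ is pseudo-Anosov and that $[o,\omega_{nN}o]$ fellow travels its axis, Lemma \ref{closing}), applying Theorem \ref{bcg} to that closed manifold, and then transferring back to $\vol{Q(o,\omega_{nN}o)}$ via Corollary \ref{qf vs mt} and Lemma \ref{interpolation} (which rest on Proposition \ref{replacement}). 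You would need this detour, or an honest boundary version of the BCG comparison, to complete Step 2.
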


Let us remark again that $v=v(\mu)>0$ is the same as in Theorem \ref{main1}.

\subsection{Mapping tori and Heegaard splittings}
Let us assume Theorem \ref{qf tracking} and prove the result for random 3-manifolds:

\begin{proof}[Proof of Theorem \ref{main1}]
Fix $\ep>0$. Let $\tau_\omega:[0,\infty)\rightarrow\T$ be the ray connecting $o$ to $\text{\rm bnd}(\omega)$.

{\bf Mapping tori}. We use the model for $T_{\omega_n}$ coming from Corollary \ref{qf vs mt} (see also Figure \ref{fig:figure7}): By Proposition \ref{recurrence}, if $n$ is large enough, we can find on $\tau_\omega$ four points $x_n<y_n<z_n<w_n<x_n+L(\omega_n)$ such that the intervals $[x_n,y_n]$ and $[z_n,w_n]$ satisfy the hypotheses of Corollary \ref{qf vs mt}: They are contained in $[\ep L_\T n,2\ep L_\T n]$ and $[(1-2\ep)L_\T n,(1-\ep)L_\T n]$ respectively. Their length is at least $h$ and their image is $\eta$-thick. The restriction of $\tau_\omega$ to $[x_n,w_n]$ $\delta$-fellow travels the Teichmüller axis $l_n:\mb{R}\rightarrow\T$ of $\omega_n$ whose translation length is roughly $(1-\ep)L_\T n\le L(\omega_n)\le (1+\ep)L_\T n$. Applying Corollary \ref{qf vs mt} we get:

\begin{lem}
\label{interpolation}
For $\mb{P}$-almost every $\omega$ and every large enough $n\ge n_\omega$ we have
\[
\left|\vol{Q(\tau_\omega(x_n),\tau_\omega(w_n))}-\vol{T_{\omega_n}}\right|\le\ep n.
\]
and
\[
\left|\vol{Q_{\omega_n}}-\vol{Q(\tau_\omega(x_n),\tau_\omega(w_n))}\right|\le\ep n.
\]
\end{lem}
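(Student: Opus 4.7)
The plan is to establish the two inequalities separately. The first relates $\vol{T_{\omega_n}}$ to the quasi-fuchsian volume via Corollary~\ref{qf vs mt}, and the second relates $\vol{Q(o,\omega_no)}$ to the same quantity via Proposition~\ref{replacement}. The inputs in both cases are the fellow-traveling/recurrence from Theorem~\ref{recurrence} and the sublinear tracking from Theorem~\ref{tracking}. Throughout, I treat $\ep$ as a parameter to be fixed small from the outset, so that all multiplicative constants below collapse into the final bound $\le\ep n$; this is the same implicit rescaling already used in the outer proof.

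For the first inequality, I would transfer the four points $x_n<y_n<z_n<w_n$ on the tracking ray to four points $a_n<b_n<c_n<d_n$ on the Teichm\"uller axis $l_n$ of $\omega_n$, each obtained as a closest-point projection, using the $\delta$-fellow-traveling of $l_n$ with $\tau_\omega$ on $[\ep L_\T n,(1-\ep)L_\T n]$. After shrinking $\eta$ and $h$ slightly to absorb the $\delta$-slop, the intervals $[a_n,b_n]$ and $[c_n,d_n]$ lie in $\T_\eta$ and each have length at least $h$; the inequality $L(\omega_n)\ge(1-\ep)L_\T n>(1-2\ep)L_\T n\ge w_n-x_n$ from the fellow-traveling clause of Theorem~\ref{recurrence} forces $d_n-a_n<L(\omega_n)$ for $n$ large. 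Corollary~\ref{qf vs mt}, applied with a fixed $\xi_0<1$, then yields
\[
\bigl|\vol{T_{\omega_n}}-\vol{Q(l_n(a_n),l_n(d_n))}\bigr|\le\kappa(L(\omega_n)+b_n-c_n)+\xi_0\kappa(d_n-a_n)+C.
\]
A short triangle-inequality computation (using that $l_n$ is a unit-speed geodesic and that the four axis-points are within $\delta$ of their ray-counterparts) gives $b_n-c_n\le -(1-4\ep)L_\T n+2\delta$ and $d_n-a_n\le L_\T n+2\delta$, so combined with $L(\omega_n)\le(1+\ep)L_\T n$ the right-hand side is bounded by $(5\kappa\ep+\xi_0\kappa)L_\T n+O(1)$. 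Proposition~\ref{replacement} then swaps $l_n(a_n),l_n(d_n)$ for $\tau_\omega(x_n),\tau_\omega(w_n)$ at additional cost $2\kappa\delta+\kappa=O(1)$, so if $\ep$ and $\xi_0$ were chosen small enough initially, the total is $\le\ep n$ for large $n$.

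For the second inequality, the tool is sublinear tracking: Theorem~\ref{tracking} gives $d_\T(\omega_no,\tau_\omega(L_\T n))=o(n)$. Together with $d_\T(o,\tau_\omega(x_n))=x_n\le 2\ep L_\T n$ and $|L_\T n-w_n|\le 2\ep L_\T n$, the triangle inequality yields $d_\T(\omega_no,\tau_\omega(w_n))\le 2\ep L_\T n+o(n)$. Proposition~\ref{replacement} then immediately gives
\[
\bigl|\vol{Q(o,\omega_no)}-\vol{Q(\tau_\omega(x_n),\tau_\omega(w_n))}\bigr|\le 4\kappa\ep L_\T n+o(n)+\kappa,
\]
which is $\le\ep n$ for $n$ large. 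The main obstacle is not any single estimate but the order of parameter choices: one must first fix $\ep,\xi_0,\eta$ small enough that both $(5\kappa\ep+\xi_0\kappa)L_\T$ and $4\kappa\ep L_\T$ are strictly below the target rate, then pick $h=h(\eta,\xi_0)$ via Corollary~\ref{qf vs mt}, and only afterwards invoke Theorem~\ref{recurrence} with these quantities to locate the four points $x_n,y_n,z_n,w_n$ on $\tau_\omega$.
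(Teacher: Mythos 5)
Your proposal is correct and follows essentially the same route as the paper: apply Corollary \ref{qf vs mt} to points on the axis $l_n$ obtained from $x_n,y_n,z_n,w_n$ via the $\delta$-fellow-traveling of Theorem \ref{recurrence}, swap axis points for ray points at uniform additive cost using Proposition \ref{replacement}, and handle the second inequality by Proposition \ref{replacement} plus Tiozzo's sublinear tracking; the implicit rescaling of $\ep$ you flag is exactly what the paper does. Your explicit attention to transferring the intervals to the axis and to the order of parameter choices ($\ep,\xi_0,\eta$, then $h$, then the recurrence) is a slightly more careful rendering of the same argument, not a different one.
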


\begin{proof}[Proof of Lemma \ref{interpolation}]
By Corollary \ref{qf vs mt} we have
\begin{align*}
&\left|\vol{T_{\omega_n}}-\vol{Q(l_n(x_n),l_n(w_n))}\right|\\
&\le\kappa(L(\omega_n)+y_n-z_n)+\xi\kappa(w_n-x_n)+\text{\rm const}\\
&\le\kappa 4\ep L_\T n+\xi\kappa(1-2\ep)L_\T n+\text{\rm const}. 
\end{align*}
Up to a uniform additive constant we can also replace $Q(l_n(x_n),l_n(w_n))$ with $Q(\tau_\omega(x_n),\tau_\omega(w_n))$. If $n$ is large enough we can improve the last quantity to $\ep n$. Instead, from Proposition \ref{replacement}
\begin{align*}
&\left|\vol{Q(o,\omega_no)}-\vol{Q(\tau_\omega(x_n),\tau_\omega(w_n))}\right|\\
&\le \kappa(d_\T(o,\tau_\omega(x_n))+d_\T(\tau_\omega(w_n),\omega_no))+\kappa\\
&\le\kappa(d_\T(o,\tau_\omega(x_n))+d_\T(\tau_\omega(w_n),\tau_\omega(L_\T n))+d_\T(\tau_\omega(L_\T n),\omega_no))+\kappa.
\end{align*}
By our choice of $x_n,w_n$ and Tiozzo's sublinear tracking (Theorem \ref{tracking}), if $n$ is large enough, we can bound the last quantity by $\ep n$.
\end{proof}

Lemma \ref{interpolation} and Theorem \ref{qf tracking} imply that $|\vol{T_{\omega_n}}-nv|=o(n)$ which concludes the proof for mapping tori.

\begin{figure}
\begin{center}
\begin{overpic}{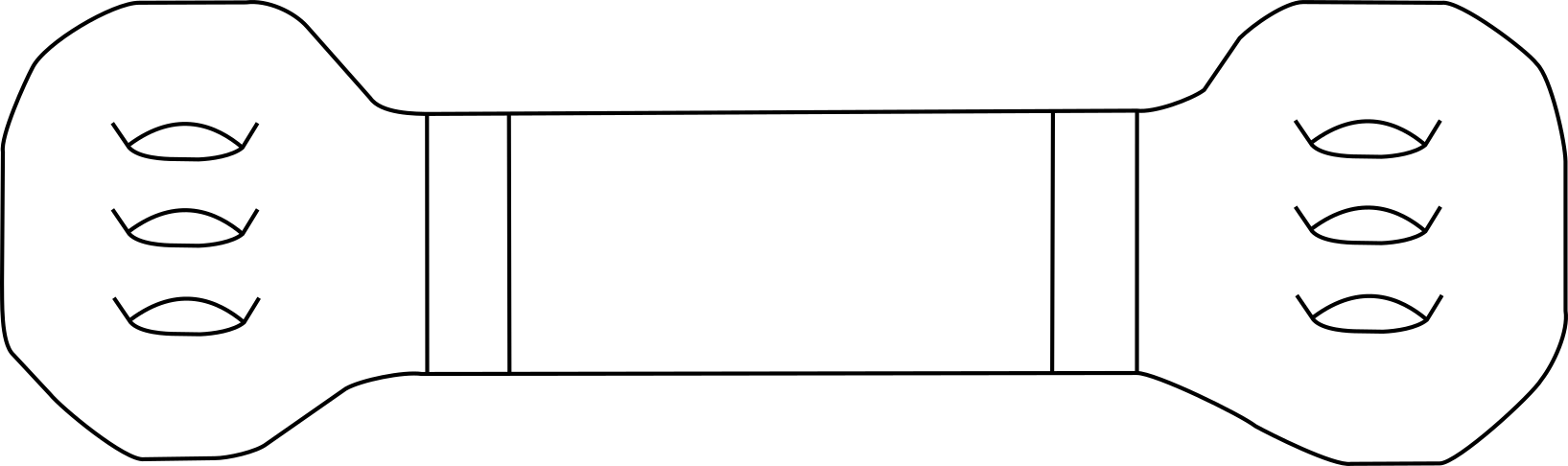}
\put (1,12) {$H_1$}
\put (28,12) {$\Omega_1$}
\put (47,12) {$Q^0$}
\put (68,12) {$\Omega_2$}
\put (93,12) {$H_2$}
\end{overpic}
\end{center}
\caption{Model for a random Heegaard splitting.}
\label{fig:figure6}
\end{figure}

{\bf Heegaard splittings}. The argument is completely analogous to the previous one, but the model is different. We use the one constructed in \cite{HV}, in particular Proposition 7.1. For convenience of the reader we give a brief description of it: Recall that $\ep>0$ is fixed. A random Heegaard splitting $M_{\omega_n}$ admits a negatively curved Riemannian metric $\rho$ with the following properties (see Figure \ref{fig:figure6}): It is purely hyperbolic outside two regions $\Omega:=\Omega_1\sqcup\Omega_2$ which have uniformly bounded diameter and where the sectional curvatures lie in the interval $(-1-\ep,-1+\ep)$. The complement $M_{\omega_n}-\Omega$ decomposes into three connected pieces $H_1\sqcup Q^0\sqcup H_2$. The pieces $H_1,H_2$ are homeomorphic to handlebodies and have small volume $\text{\rm vol}(H_1\sqcup H_2\sqcup\Omega)\le\ep n$. The middle piece $Q^0$ embeds isometrically in the convex core of $Q(o,\omega_no)$, moreover $\text{\rm vol}(Q(o,\omega_no))-\text{\rm vol}(Q^0)\le\ep n$. Hence we can apply again Theorem \ref{bcg} and Theorem \ref{qf tracking}.     
\end{proof}

We now proceed with the proof of Theorem \ref{qf tracking}.

\subsection{Strategy overview}
Denote by $Q_\phi$ the manifold $Q(o,\phi o)$. 

We want to show that for $\mb{P}$-almost every $\omega$ the sequence $\text{\rm vol}(Q_{\omega_n})/n$ converges. Suppose this is not the case. Then there exists a set $\Omega_{\text{\rm bad}}$ with positive measure $\mb{P}\qa{\Omega_{\text{\rm bad}}}>0$ on which 
\[
\limsup_{n\rightarrow\infty}{\frac{\vol{Q_{\omega_n}}}{n}}-\liminf_{n\rightarrow\infty}{\frac{\vol{Q_{\omega_n}}}{n}}>0.
\]
We can as well assume that there is a small $\ep_0>0$ and a set $\Omega_{\text{\rm bad}}^{\ep_0}$ with positive measure $\zeta_0:=\mb{P}\qa{\Omega_{\text{\rm bad}}^{\ep_0}}>0$ on which the difference is at least $\ep_0>0$. Hence, in order to get a contradiction, it is enough to prove that for every $\ep,\zeta>0$ there exists a set $\Omega_{\ep,\zeta}$ with measure $\mb{P}\qa{\Omega_{\ep,\zeta}}\ge 1-\zeta$ on which the difference between limsup and liminf is smaller than $\ep$.

First we observe that we can exploit a {\em neighbour approximation property} of the volumes (Lemma \ref{nearby points}). It allows a convenient technical reduction: We can make the random walk {\em faster} and still keep under control the asymptotic behaviour (Lemma \ref{real numbers}). The faster we make the random walk the more properties we can prescribe, a feature that will be important in Proposition \ref{approximation}. The central step of the proof consists of finding a set on which the variables $\text{\rm vol}(Q_{\omega_{nN}})$ and the {\em ergodic sum} $\sum_{j<n}{\text{\rm vol}(Q_{\sigma^{jN}(\omega)_N})}$ are comparable (Proposition \ref{approximation}). Finally, we use the {\em ergodic theorem} to conclude the proof.

\subsection{A faster random walk}
For every $N\in\mb{N}$ we can replace the random walk $\omega$ with $(\omega_{jN})_{j\in\mb{N}}$ and the shift map $\sigma$ with $\sigma^N$. The dynamical system $(\Omega,\mu^{\mb{N}},\sigma^N)$ is still ergodic. As we wish to apply the ergodic theorem, we discuss the integrability condition of the volume function and the relations between the asymptotics of the faster random walk and the original one. Recall that $S$, the support of $\mu$, is symmetric and generates $G=\text{\rm Mod}(\Sigma)$. 

\begin{lem}
\label{integrable}
There exists $C>0$ such that for every $\phi\in G$ we have $\vol{Q_{\phi}}\le C\left|\phi\right|_S+C$ where $\left|\phi\right|_S$ is the word length in the generating set $S$.
\end{lem}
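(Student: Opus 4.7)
The plan is to exploit Proposition \ref{replacement} together with the fact that the Fuchsian locus has vanishing convex core volume and that the orbit map $\mcg \curvearrowright \T$ is Lipschitz with respect to the word metric.

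\textbf{Step 1: Reduction to a Teichmüller distance estimate.} I would first observe that the Fuchsian manifold $Q(o,o)$ has a totally geodesic surface as convex core, so $\vol{Q(o,o)} = 0$. Applying Proposition \ref{replacement} with $(X,Y) = (o, \phi o)$ and $(X',Y') = (o,o)$ yields
\[
\vol{Q_\phi} = \left| \vol{Q(o,\phi o)} - \vol{Q(o,o)} \right| \le \kappa\,d_\T(o, \phi o) + \kappa.
\]
(The term $d_\T(o,o)=0$ drops out.) This reduces the problem to bounding $d_\T(o, \phi o)$ linearly in $|\phi|_S$.

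\textbf{Step 2: Lipschitz orbit map.} Since the mapping class group acts by isometries on $(\T, d_\T)$ and $S$ is finite, the quantity
\[
D := \max_{s \in S} d_\T(o, s o)
\]
is finite. If $\phi = s_1 \cdots s_n$ is a geodesic word in $S$, i.e.\ $n = |\phi|_S$, then the triangle inequality together with the isometric action gives
\[
d_\T(o, \phi o) \le \sum_{i=1}^{n} d_\T(s_1\cdots s_{i-1} o, s_1 \cdots s_i o) = \sum_{i=1}^{n} d_\T(o, s_i o) \le D \cdot |\phi|_S.
\]

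\textbf{Step 3: Combining.} Plugging into Step 1, we obtain $\vol{Q_\phi} \le \kappa D |\phi|_S + \kappa$, so the constant $C := \max\{\kappa D, \kappa\}$ works.

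There is no real obstacle here: the statement is an essentially immediate consequence of the Lipschitz behaviour of quasi-fuchsian volume in Teichmüller distance recorded in Proposition \ref{replacement} and of the finiteness of the generating set. The only mildly delicate point is recalling the vanishing of $\vol{Q(o,o)}$ (which follows from the fact that the Fuchsian convex core has codimension one and therefore zero three-dimensional volume), so that Proposition \ref{replacement} becomes a genuine upper bound rather than a two-sided estimate against an unknown constant.
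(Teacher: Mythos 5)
Your argument is correct and follows essentially the same route as the paper: Proposition \ref{replacement} gives $\vol{Q_\phi}\le\kappa d_\T(o,\phi o)+\kappa$ (the paper leaves implicit the comparison with the Fuchsian manifold $Q(o,o)$ of zero convex-core volume, which you spell out), and the triangle inequality with the isometric action bounds $d_\T(o,\phi o)$ by $\max_{s\in S}d_\T(o,so)\cdot|\phi|_S$. Nothing further is needed.
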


\begin{proof}
Let $\phi=s_1\dots s_n$ with $s_i\in S$. By Proposition \ref{replacement} we have $\vol{Q_{\phi}}\le\kappa d_\T(o,\phi o)+\kappa$. By the triangle inequality $d_\T(o,s_1\dots s_no)\le\sum_{j<n}{d_\T(o,s_jo)}$ $\le\max_{s\in S}\ga{d_\T(o,so)}n$.
\end{proof}

In particular, for any fixed $n\in\mb{N}$, the function $\vol{Q_{\omega_n}}$ is integrable on $(\Omega,\mc{E},\mb{P})$ and we can apply the Birkhoff ergodic theorem. Moreover, we have the following neighbour approximation property.

\begin{lem}
\label{nearby points}
For $\mb{P}$-almost every sample path $\omega\in\Omega$, for every $n,m$ we have
\[
\left|\vol{Q_{\omega_{n+m}}}-\vol{Q_{\omega_n}}\right|\le Cm+C.
\]
\end{lem}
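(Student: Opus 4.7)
The plan is to reduce the statement to a bound on Teichmüller distances via Proposition \ref{replacement}, and then exploit the fact that $\omega_{n+m}$ differs from $\omega_n$ by right-multiplication with a word of length $m$ in the generators. First I would apply Proposition \ref{replacement} with $X=X'=o$, $Y=\omega_n o$, $Y'=\omega_{n+m}o$, which immediately yields
\[
\left|\vol{Q_{\omega_n}}-\vol{Q_{\omega_{n+m}}}\right|\le\kappa\, d_\T(\omega_n o,\omega_{n+m}o)+\kappa.
\]
So the whole problem boils down to controlling $d_\T(\omega_n o,\omega_{n+m}o)$ linearly in $m$.

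Next I would write $\omega_{n+m}=\omega_n\cdot(s_{n+1}\cdots s_{n+m})$ and use the fact that $\omega_n\in\mcg$ acts isometrically on $(\T,d_\T)$ to get
\[
d_\T(\omega_n o,\omega_{n+m}o)=d_\T(o,(s_{n+1}\cdots s_{n+m})o).
\]
Iterated triangle inequality bounds the right-hand side by $\sum_{j=1}^m d_\T(o,s_{n+j}o)\le mD$, where $D:=\max_{s\in S}d_\T(o,so)<\infty$ since $S$ is finite. Combining with the previous display and choosing $C:=\max\{\kappa D,\kappa\}$ gives the required inequality $|\vol{Q_{\omega_{n+m}}}-\vol{Q_{\omega_n}}|\le Cm+C$. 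This is a direct carbon copy of the argument proving Lemma \ref{integrable}, just applied to the translated pair of points $(\omega_n o,\omega_{n+m}o)$ instead of $(o,\phi o)$, and with the same constant $C$ absorbing $\kappa$ and $\kappa D$.

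The main (non-)obstacle is that there is essentially no obstacle: the estimate holds deterministically for every $\omega\in\Omega$ and every $n,m\in\mb{N}$, so the qualification "for $\mb{P}$-almost every $\omega$" in the statement is a stylistic alignment with the rest of the section rather than a genuinely probabilistic restriction. The only inputs used are the Kojima–McShane/Schlenker comparison of Proposition \ref{replacement}, the isometric action of $\mcg$ on Teichmüller space, and finiteness of the generating set $S$ — all of which are already in place.
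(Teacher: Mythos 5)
Your proposal is correct and follows essentially the same route as the paper: Proposition \ref{replacement} reduces the claim to bounding $d_\T(\omega_no,\omega_{n+m}o)$, which is then controlled by the word length $m$ via the isometric action and the triangle inequality, exactly as in Lemma \ref{integrable}. Your observation that the estimate is deterministic (the almost-sure qualifier being inessential) is also accurate.
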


\begin{proof}
By Proposition \ref{replacement} $\left|\vol{Q_{\omega_{n+m}}}-\vol{Q_{\omega_n}}\right|\le\kappa d_\T(\omega_no,\omega_{n+m}o)+\kappa$. From the triangle inequality $d_\T(\omega_no,\omega_{n+m}o)\le C\left|\omega_n^{-1}\omega_{n+m}\right|_S\le Cm$. 
\end{proof}

The next completely elementary lemma illustrates why the neighbour approximation property allows to speed up the random walk without loosing control on the asymptotic behaviour.

\begin{lem}
\label{real numbers}
Consider a sequence $\ga{a_n}_{n\in\mb{N}}\subset\mb{R}$ and an integer $N\in\mb{N}$. Suppose that the sequence satisfies $\left|a_{n+m}-a_n\right|\le Cm+C$ for every $n,m$. Assume that $A:=\limsup_{j\rightarrow\infty}{\frac{a_{jN}}{jN}}$ and $a:=\liminf_{j\rightarrow\infty}{\frac{a_{jN}}{jN}}$ are finite. Then
\[
a\le\liminf_{n\rightarrow\infty}{\frac{a_n}{n}}\le\limsup_{n\rightarrow\infty}{\frac{a_n}{n}}\le A.
\]
\end{lem}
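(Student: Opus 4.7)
The plan is to reduce the behaviour of $a_n/n$ to that of the subsequence $a_{jN}/(jN)$ by controlling the discrepancy via the assumed Lipschitz-type bound. For each $n$ I would write $n = j(n)N + r(n)$ with $0 \le r(n) < N$, so $j(n) = \lfloor n/N \rfloor$. Applying the hypothesis $|a_{n+m} - a_n| \le Cm + C$ with base point $j(n)N$ and increment $r(n) < N$ yields
\[
|a_n - a_{j(n)N}| \le CN + C,
\]
a constant depending only on $N$ and $C$. Dividing by $n$ therefore gives $a_n/n - a_{j(n)N}/n = O(1/n)$.

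Next I set $b_j := a_{jN}/(jN)$ and rewrite $a_{j(n)N}/n = b_{j(n)} \cdot (j(n)N/n)$. Since by hypothesis $A = \limsup_j b_j$ and $a = \liminf_j b_j$ are both finite, the sequence $(b_j)$ is eventually bounded: there exist $J_0$ and $B$ with $|b_j| \le B$ for all $j \ge J_0$. For $n$ large enough that $j(n) \ge J_0$ we then have
\[
\left| b_{j(n)} \left( \frac{j(n)N}{n} - 1 \right) \right| = |b_{j(n)}| \cdot \frac{r(n)}{n} \le \frac{BN}{n} \longrightarrow 0.
\]
Combining the two steps gives $a_n/n = b_{j(n)} + o(1)$ as $n \to \infty$.

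Finally, as $n$ ranges over the block of integers in $[jN, (j+1)N)$ the value $b_{j(n)}$ equals $b_j$, so the sequence $(b_{j(n)})_n$ is just $(b_j)_j$ with each term repeated $N$ times. Consequently $\limsup_n b_{j(n)} = \limsup_j b_j = A$ and $\liminf_n b_{j(n)} = \liminf_j b_j = a$. Together with the previous paragraph this in fact proves the stronger statement $\limsup_n a_n/n = A$ and $\liminf_n a_n/n = a$, which contains the two claimed inequalities.

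The argument is pure bookkeeping with essentially no obstacle; the one place requiring mild care is observing that finiteness of \emph{both} $\limsup b_j$ and $\liminf b_j$ gives eventual boundedness of $(b_j)$, which is precisely what allows the error term $b_{j(n)} \cdot r(n)/n$ to be absorbed into the $o(1)$.
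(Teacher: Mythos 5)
Your argument is correct, and it in fact establishes the sharper conclusion that $\limsup_n a_n/n = A$ and $\liminf_n a_n/n = a$. The paper states this lemma without proof, calling it completely elementary, and your bookkeeping (writing $n=j(n)N+r(n)$, using the hypothesis to get $|a_n-a_{j(n)N}|\le CN+C$, and absorbing the factor $j(n)N/n$ via eventual boundedness of $a_{jN}/(jN)$) is exactly the intended argument.
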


\subsection{Comparison with ergodic sums}

The following is our main estimate

\begin{pro}
\label{approximation}
Fix $\ep,\zeta>0$. There exists $N(\ep,\zeta)>0$ and a set $\Omega_{\ep,\zeta,N}$ with $\mb{P}\qa{\Omega_{\ep,\zeta,N}}\ge 1-\zeta$ such that for every $\omega\in\Omega_{\ep,\zeta,N}$ and $n\in\mb{N}$ large enough we have 
\[
\left|\vol{Q_{\omega_{nN}}}-\sum_{0\le j<n}{\vol{Q_{(\sigma^{jN}\omega)_N}}}\right|\le\text{\rm const}\cdot\ep nN
\]
for some uniform $\text{\rm const}>0$. 
\end{pro}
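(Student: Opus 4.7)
The plan follows the heuristic sketched in the introduction. Split the word into blocks $f_j:=(\sigma^{jN}\omega)_N=\omega_{jN}^{-1}\omega_{(j+1)N}$ and observe that $\mcg$-equivariance of the Bers construction gives $\vol{Q_{f_j}}=\vol{Q(\omega_{jN}o,\omega_{(j+1)N}o)}$. I would assemble a negatively curved model $X$ for $Q_{\omega_{nN}}$ by juxtaposing, via the cut and glue construction of Proposition \ref{gluing}, quasi-fuchsian pieces whose endpoints lie near the orbit points $\omega_{jN}o$, and then extract the estimate from two comparisons: $\vol{X}$ against $\vol{Q_{\omega_{nN}}}$ via Theorem \ref{bcg}, and the volumes of the individual pieces against $\vol{Q_{f_j}}$ via Proposition \ref{replacement}.

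First I would fix $\xi\in(0,1)$ small enough that $(1+K\xi)^{3/2}-1<\ep$, with $\eta,\delta$ the constants from Proposition \ref{recurrence} and $h_0$ chosen large enough to satisfy the hypotheses of both Proposition \ref{gluing} and Lemma \ref{disjoint}. Combining Theorem \ref{tracking}, Proposition \ref{recurrence}, and Egorov's theorem for the $\sigma^N$-ergodic system, I would select $N=N(\ep,\zeta)$ and $\Omega_{\ep,\zeta,N}$ with $\mb{P}\qa{\Omega_{\ep,\zeta,N}}\ge 1-\zeta$ so that for $\omega\in\Omega_{\ep,\zeta,N}$ and $n$ large: for each $j\in\ga{1,\dots,n-1}$ the tracking ray $\tau_\omega$ contains an $\eta$-thick subsegment $J_j$ of length in $\qa{h_0,2h_0}$ located near parameter $L_\T jN$, and the orbit point $\omega_{jN}o$ lies within a controlled Teichm\"uller distance of $\tau_\omega(L_\T jN)$. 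Choosing gluing points $P_j\in J_j$, setting $P_0:=o$ and $P_n:=\omega_{nN}o$, and defining $\tilde{Q}_j:=Q(P_j,P_{j+1})$, the Teichm\"uller geodesics associated to consecutive $\tilde{Q}_j$ and $\tilde{Q}_{j+1}$ $\delta$-fellow travel on $J_{j+1}$, so Proposition \ref{gluing} assembles the pieces into a compact manifold $X$ homeomorphic to $\Sigma\times\qa{0,1}$, with sectional curvatures pinched in $\pa{-1-K\xi,-1+K\xi}$ and $\left|\vol{X}-\sum_j\vol{\tilde{Q}_j}\right|\le nV_0$.

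Two comparisons then close the argument. The first, $\left|\vol{X}-\vol{Q_{\omega_{nN}}}\right|$, comes from applying Theorem \ref{bcg} to the identity between $X$ and the convex core of $Q_{\omega_{nN}}$ after a suitable rescaling; both share the topology of $\Sigma\times\qa{0,1}$, and the curvature pinching costs a multiplicative factor at most $(1+K\xi)^{3/2}$, which combined with the coarse upper bound $\vol{Q_{\omega_{nN}}}\le CnN$ from Lemma \ref{integrable} contributes at most $\ep\cdot CnN$ to the error. The second, via Proposition \ref{replacement}, is
\[
\left|\vol{\tilde{Q}_j}-\vol{Q_{f_j}}\right|\le\kappa\pa{d_\T(P_j,\omega_{jN}o)+d_\T(P_{j+1},\omega_{(j+1)N}o)}+\kappa,
\]
and here lies the main obstacle: Tiozzo's sublinear tracking only yields an $o(jN)$ bound per orbit point, whose naive sum over $j$ is $o(n^2N)$, far too large. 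Overcoming this is the reason $N$ must be chosen depending on $\ep,\zeta$: Egorov applied to the recurrence/tracking statements of Section \ref{random walks} forces the local displacement $d_\T(\omega_{jN}o,P_j)$ to be at most $\ep N$ on a set of blocks of $\sigma^N$-density at least $1-\ep$, while on the exceptional blocks the trivial linear-drift bound $d_\T(\omega_{jN}o,P_j)\le CN$ contributes only $C\ep nN$ in total. Summing the three error sources $nV_0$, $\ep CnN$, and $O(\ep nN)$ yields the desired estimate $\text{const}\cdot\ep nN$.
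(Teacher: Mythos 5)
Your outline reproduces the paper's heuristic, but two of its load-bearing steps do not go through as you describe them. The first is the comparison $\left|\vol{X}-\vol{Q_{\omega_{nN}}}\right|$ via Theorem \ref{bcg}: that theorem (and the way it is used in this paper, through Corollary \ref{special case}, i.e.\ applying it to the identity map in both directions) is stated for \emph{closed} orientable 3-manifolds and rests on a degree argument. Your glued model $X$ and the convex core $\mc{CC}(Q_{\omega_{nN}})$ are compact manifolds with boundary, homeomorphic to $\Sigma\times\qa{0,1}$; there is no natural-map volume inequality of this form available for them (no well-defined degree without boundary control, and no two-sided estimate from an ``identity in both directions'', since the two metrics do not live on one closed manifold). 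This is precisely why the paper does not compare the glued model with the quasi-fuchsian manifold directly: for good $\omega$ the element $\omega_{nN}$ is pseudo-Anosov, the construction adds a closing block $Q(C_s,\omega_{nN}B_r)$ and uses the last part of Proposition \ref{gluing} to glue the two ends by a map in the homotopy class of $\omega_{nN}$, producing a \emph{closed} negatively curved manifold diffeomorphic to the mapping torus $T_{\omega_{nN}}$, to which Theorem \ref{bcg} applies; the bridge back from $\vol{T_{\omega_{nN}}}$ to $\vol{Q_{\omega_{nN}}}$ is then a separate step, Lemma \ref{interpolation} (via Corollary \ref{qf vs mt}), using a second one-block model of the same mapping torus built on the tracking ray. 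Without this ``close up into the mapping torus'' idea your central volume comparison is unsupported.

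The second gap is the control of $\sum_j d_\T(P_j,\omega_{jN}o)$ by $\text{\rm const}\cdot\ep nN$, which you need in order to compare $\vol{\tilde Q_j}$ with $\vol{Q_{f_j}}$ via Proposition \ref{replacement}. Your proposed mechanism (Egorov for the $\sigma^N$-system applied to the tracking statement, giving displacement $\le\ep N$ on blocks of density $1-\ep$) does not apply: the deviation $d_\T(\omega_{jN}o,\tau_\omega(L_\T jN))$ is \emph{not} a function of the shifted path $\sigma^{jN}\omega$ --- it depends on the ray of the whole walk, hence on $\text{\rm bnd}(\omega)$ and on the past $\omega_{jN}$ --- so neither Birkhoff nor Egorov controls its frequency of being small, and Theorem \ref{tracking} only yields $o(jN)$ per point, i.e.\ $o(n^2N)$ in total, as you note. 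Moreover, on the exceptional blocks the displacement from the ray is not bounded by $CN$; what is bounded by $CN+C$ there is the block volume $\vol{Q_{f_j}}$ itself (Lemma \ref{integrable}). The paper circumvents all of this by renormalizing block by block: the good set $\Omega_{\ep,N}$ is defined by properties of a path holding for all times $\ge N$, a good index $j$ means $\sigma^{jN}\omega\in\Omega_{\ep,N}$, the $j$-th building block $Q(A_j,D_j)$ has its defining points on the recentered ray $\omega_{jN}\tau_{\sigma^{jN}\omega}$ based at the orbit point (so the error per good block is $O(\ep N)$ by construction), consecutive recentered rays fellow travel by property (3), bad indices are discarded through the refined interval subdivision and contribute only $O(\ep nN)$ via Lemma \ref{integrable}, and Lemma \ref{closing} (curve-graph hyperbolicity, contraction and stability in the thick part) is what keeps the whole chain of blocks close to $[O,O_n]$ and to the axis of $\omega_{nN}$ so that the closing block has volume $O(\ep nN)$. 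Any repair of your argument will need substitutes for both of these ingredients.
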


We will show that, for a suitably chosen $N$, both families $\{Q_{\omega_{nN}}\}$ and $\{Q_{(\sigma^{jN}\omega)_N}\}_{j<n}$ can be {\em refined} to construct {\em models}, via Proposition \ref{gluing}, for the hyperbolic mapping torus $T_{\omega_{nN}}$. The central property of the models is that they {\em nearly compute the volume} $\vol{T_{\omega_{nN}}}$. This suffices to conclude.

\begin{proof}
Let $\delta>0$ be the fellow traveling constant of Proposition \ref{recurrence} and $h$ a large height. Since the value of $L_\T>0$ is irrelevant and only complicates some formulas below by affecting the value of some constants, we are going to assume $L_\T=1$. In the course of the proof, specifically in the inequalities (1)-(13), we will get several uniform constants which depend on previous steps and whose explicit expressions are irrelevant for the argument. In order to simplify the exposition we will always denote these different constants by $\text{\rm const}>0$. 

For every $N$ denote by $\Omega_{\ep,N}$ the set of paths satisfying the following properties
\begin{enumerate}
\item{$\omega_n$ is pseudo-Anosov and $L(\omega_n)/n\in(1-\ep,1+\ep)$ for every $n\ge N$.}
\item{$l_n$, the axis of $\omega_n$, $\delta$-fellow travels $\tau_\omega[\ep n,(1-\ep)n]$ for every $n\ge N$.}
\item{$\omega_n\tau_\omega[\ep n,\infty]$ $\delta$-fellow travels $\tau_\omega[(1+\ep)n,\infty]$ for every $n\ge N$.}
\item{$\tau_\omega[\ep n,2\ep n]$ and $\tau_\omega[(1\pm\ep)n,(1\pm 2\ep)n]$ contain $\eta$-thick subsegments of length at least $h$ for every $n\ge N$.}
\item{The conclusions of Lemma \ref{interpolation} hold for every $n\ge N$.}
\item{$d_\T(o,\omega_no)/n\in(1-\ep,1+\ep)$ for all $n\ge N$.}
\end{enumerate}

\begin{figure}[H]
\begin{center}
\begin{overpic}{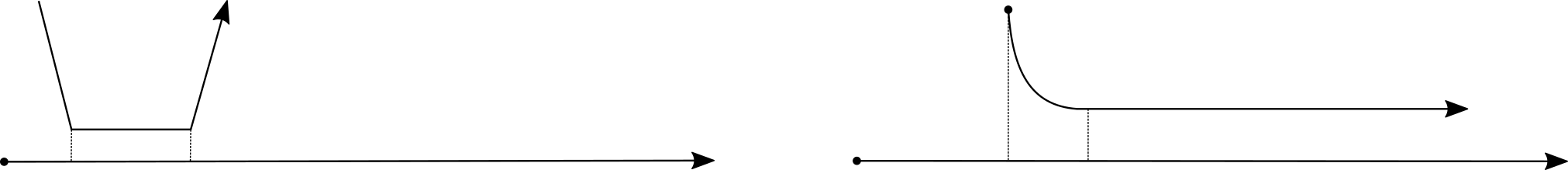}
\put (-3,-2) {$\tau_\omega$}
\put (2,-2.5) {$\ep n$}
\put (8,-2.5) {$(1-\ep)n$}
\put (51,-2) {$\tau_\omega$}
\put (-2,11) {\textcircled{\tiny 2}}
\put (50,11) {\textcircled{\tiny 3}}
\put (63,12) {$\omega_n\tau_{\sigma^n(\omega)}$}
\put (66,-2.5) {$(1+\ep)n$}
\put (68,5) {$\ep n$}
\put (16.5,10) {$l_n$}
\end{overpic}
\end{center}
\caption{Properties 2 and 3.}
\label{fig:figure3}
\end{figure}

Observe that if $N_1\ge N_2$ then $\Omega_{\ep,N_2}\subseteq\Omega_{\ep,N_1}$, if we enlarge $N$ the set can only get bigger. We reserve ourselves the right to determine later suitably modified constants $\delta,h,N$. Since all the properties are satisfied asymptotically with probability one, for fixed $\ep,\zeta>0$ there exists some $N(\ep,\zeta,h)$ such that $\Omega_{\ep,N}$ has measure at least $1-\zeta$. Fix $N$ larger than this threshold and speed up the random walk, that is replace $\omega$ with $(\omega_{jN})_{j\in\mb{N}}$ and $\sigma$ with $\sigma^N$.

By ergodicity of $(\Omega,\mu^{\mb{N}},\sigma^N)$, the orbits $\{\sigma^{jN}\omega\}_{j\in\mb{N}}$ will visit the set $\Omega_{\ep,N}$ very often, the number of hitting times being proportional to the measure of the set $\ge 1-\zeta$. We record the hitting times by subdividing the interval $[n]=\{0,\dots, n\}$ into a disjoint union of consecutive intervals $[n]=I_1\sqcup J_1\sqcup\dots\sqcup I_k\sqcup J_k$ where the $I_i$'s contain the indices $j$ for which $\sigma^{jN}\omega\in\Omega_{\ep,N}$, whereas the $J_i$'s are the bad indices ($J_k$ might be empty). By the ergodic theorem the total length of the bad intervals is controlled by
\[
\frac{1}{n}\sum_{j<n}{\mathds{1}_{\Omega\setminus\Omega_{\ep,N}}(\sigma^{jN}\omega)}=\frac{1}{n}\sum_{i\le k}{|J_i|}\stackrel{n\rightarrow\infty}{\longrightarrow}\mb{P}\qa{\Omega\setminus\Omega_{\ep,N}}\le\zeta.
\] 

{\bf Basic case}. We start by proving the proposition assuming that all indices are good. Since our considerations are all independent of the past, we will also get a ``local version'' of the proposition for every good interval $I_j$.  

We are going to define two families of quasi-fuchsian manifolds that satisfy the hypotheses of Proposition \ref{gluing} and can be glued to form a model for $T_{\omega_{nN}}$ that nearly computes its volume. The two families consist of:
\begin{enumerate}[{\bf I}]
\item{Quasi-fuchsian manifolds related to $Q_{\sigma^{jN}(\omega)_N}$ for every $j\in [n]$.}
\item{A single quasi-fuchsian manifold related to $Q_{\omega_{nN}}$ as in Lemma \ref{interpolation}.}
\end{enumerate} 

\begin{figure}[H]
\begin{center}
\begin{overpic}{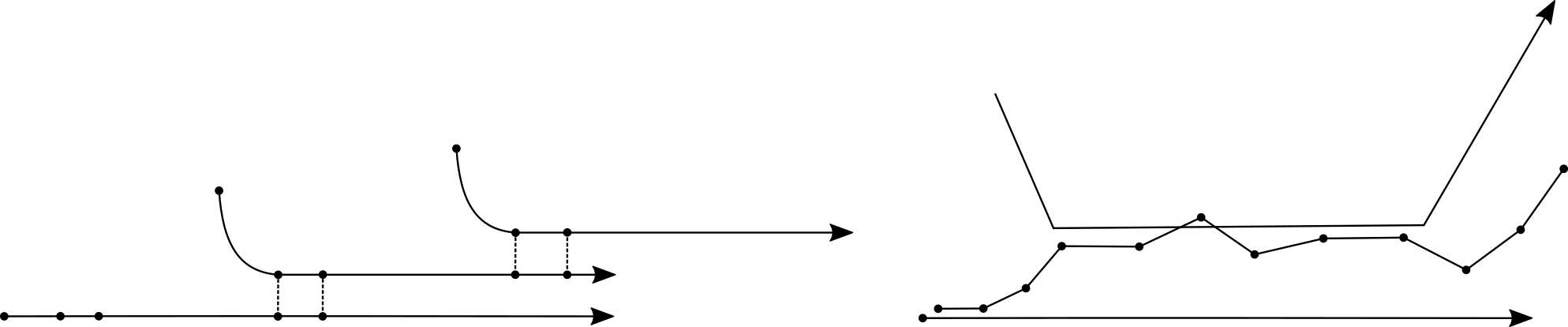}
\put (-2,18) {\textcircled{\tiny A}}
\put (56,18) {\textcircled{\tiny B}}
\put (0,2) {$\tau_\omega$}
\put (2.5,-2.5) {$a_0$}
\put (6.5,-2.5) {$b_0$}
\put (16,-2.5) {$c_0$}
\put (20,-2.5) {$d_0$}
\put (16,4.5) {$a_1$}
\put (20,4.5) {$b_1$}
\put (4,11) {$\omega_N\tau_{\sigma^N(\omega)}$}
\put (22,13) {$\omega_{2N}\tau_{\sigma^{2N}(\omega)}$}
\put (60,13) {$l_n$}
\put (56,-2) {$\tau_\omega$}
\put (68,7.5) {$a_r$}
\put (72,7.5) {$d_r$}
\put (84,7.5) {$a_s$}
\put (88,7.5) {$d_s$}
\end{overpic}
\end{center}
\caption{Basic case.}
\label{fig:figure4}
\end{figure}

{\bf Family I}. Proceed inductively. Begin with $i=0$ and the two Teichmüller rays $\tau_\omega$ and $\omega_N\tau_{\sigma^N(\omega)}$. The restrictions $\omega_N\tau_{\sigma^N(\omega)}|_{[\ep N,\infty)}$ and $\tau_\omega|_{[(1+\ep)N,\infty)}$ are $\delta$-fellow travelers. The ray $\tau_\omega$ contains four points $a_0<b_0<c_0<d_0$ such that $[a_0,b_0]\subset[\ep N,2\ep N]$ and $[c_0,d_0]\subset[(1+\ep)N,(1+2\ep)N]$, their image is $\eta$-thick and their length is at least $h$ (see Figure \ref{fig:figure4} A). The segment $[c_0,d_0]$ determines $[a_1,b_1]$ by the condition that $\omega_N\tau_{\sigma^N(\omega)}[a_1,b_1]$ $\delta$-fellow travels $\tau_\omega[a_0,b_0]$ and $[a_1,b_1]\subset[\ep N,2\ep N]$. As $1\in [n]$ is good, we can go on and find $[c_1,d_1]\subset[(1+\ep)N,(1+2\ep)N]$ of length at least $h$ and with $\tau_{\sigma^N(\omega)}$-image in $\T_\eta$. Inductively we determine $a_i<b_i<c_i<d_i$ for every $i\le n$. Before going on, let us simplify a little the notation by introducing
\begin{align*}
&A_i=\omega_{iN}\tau_{\sigma^{iN}(\omega)}(a_i), &B_i=\omega_{iN}\tau_{\sigma^{iN}(\omega)}(b_i),\\
&C_i=\omega_{iN}\tau_{\sigma^{iN}(\omega)}(c_i), &D_i=\omega_{iN}\tau_{\sigma^{iN}(\omega)}(d_i).
\end{align*}
We associate to the index $i\le n$ the quasi-fuchsian manifold $Q(A_i,D_i)$. Informally, we renormalized the picture by placing ourselves at the $iN$-th point of the orbit $O_i=\omega_{iN}o$. From there we see the segment $[A_i,D_i]$ that $\delta$-fellow travels $[O_i,\text{\rm bnd}(\omega)]$. Observe that, by Proposition \ref{replacement}, we have 
\begin{align}
\left|\vol{Q(A_i,D_i)}-\vol{Q_{\sigma^{iN}(\omega)_N}}\right| \label{ineq1}\\ 
\le\kappa(d_\T(O_i,A_i)+d_\T(D_i,O_{i+1}))+\kappa\le\kappa4\ep N+\text{\rm const}.\nonumber
\end{align}

Sequences of consecutive good indices are geometrically controlled: 

\begin{lem}
\label{closing}
The segment $[A_i,D_i]$ uniformly fellow travels $[O,O_n]$.
\end{lem}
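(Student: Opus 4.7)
The plan is to show that the Teichmüller segment $[A_i,D_i]$ lies in a uniform neighborhood of the geodesic $[O,O_n]$, with constants independent of $i\le n$ (though depending on the fixed parameters $\ep,\delta,\eta,h,N$). The overall strategy is to use the tracking ray $\tau_\omega$ as an intermediary: I show $[A_i,D_i]$ fellow-travels a subsegment of $\tau_\omega$, I show $[O,O_n]$ fellow-travels $\tau_\omega$ on the same region, and I combine the two via the triangle inequality.

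For the first step, observe that $[A_i,D_i]$ sits on the Teichmüller ray $\omega_{iN}\tau_{\sigma^{iN}(\omega)}$ emanating from $O_i=\omega_{iN}o$ and terminating at $\text{\rm bnd}(\omega)$ (the shared ideal endpoint). Property 3 in the definition of $\Omega_{\ep,N}$, applied at time $n=iN$, gives that past an initial adjustment this shifted ray $\delta$-fellow travels $\tau_\omega$ on matched parameter intervals. The parameters $a_i,d_i\in[\ep N,(1+2\ep)N]$ place $A_i$ and $D_i$ in the fellow-traveling regime, and property 4 guarantees the relevant subsegments are $\eta$-thick. Consequently there is a constant $C_1=C_1(\delta,\ep,\eta,h)$ such that $[A_i,D_i]$ is contained in the $C_1$-neighborhood of the subsegment of $\tau_\omega$ near parameter $iN$.

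Next, I claim $[O,O_n]$ stays in a uniform $C_2$-neighborhood of the portion of $\tau_\omega$ from parameter $0$ to parameter $nN$. By property 2, the axis $l_{nN}$ of $\omega_{nN}$ $\delta$-fellow travels $\tau_\omega$ on $[\ep nN,(1-\ep)nN]$. Property 6 controls $d_\T(O,O_n)$, and Tiozzo's sublinear tracking (Theorem \ref{tracking}) places $O_n$ at sublinear distance from $\tau_\omega(nN)$; with $N$ large this error is absorbed into $\ep nN$. Since all relevant segments live in the $\eta$-thick part by property 4, the thin-triangle behavior of Teichmüller geodesics in the thick part applied to the triangle with vertices $O$, $O_n$, and a point on $l_{nN}$ promotes the fellow-traveling from $l_{nN}$ to the segment $[O,O_n]$. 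In particular, for every $i\le n$ the point $\tau_\omega(iN)$ lies within $C_2$ of $[O,O_n]$. Combining the two steps via the triangle inequality gives $[A_i,D_i]$ in a $(C_1+C_2)$-neighborhood of $[O,O_n]$, independent of $i\le n$.

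The main obstacle is uniformity at the extreme values of $i$, near $0$ and near $n$, where property 2 does not directly constrain $[O,O_n]$. The resolution uses property 4 to ensure that the initial and terminal windows $[\ep nN,2\ep nN]$ and $[(1\pm\ep)nN,(1\pm 2\ep)nN]$ contain $\eta$-thick subsegments of length at least $h$, allowing the thick-part thin-triangle arguments to close the Teichmüller geodesic $[O,O_n]$ back onto $\tau_\omega$ (and onto $l_{nN}$) with constants that do not degrade with $n$. A secondary subtlety is confirming that the fellow-traveling constants obtained by property 3 at each time $iN$ are genuinely uniform rather than accumulating additively; this is where passing through $\tau_\omega$ as a fixed intermediary (rather than chaining the shifted rays one to the next) is essential.
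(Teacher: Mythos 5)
Your first step is where the argument breaks. Property (3) applied to $\omega$ at time $iN$ only says that $\omega_{iN}\tau_{\sigma^{iN}(\omega)}$ restricted to $[\ep iN,\infty)$ $\delta$-fellow travels $\tau_\omega[(1+\ep)iN,\infty)$: the excluded initial segment of the shifted ray has length $\ep iN$, growing linearly in $i$, not $\ep N$. Since $a_i,d_i\in[\ep N,(1+2\ep)N]$, for $i$ large the whole segment $[A_i,D_i]$ lies in the uncontrolled initial part, so it is not ``in the fellow-traveling regime'' as you claim. To reach $[A_i,D_i]$ from $\tau_\omega$ using property (3) you would have to apply it to the shifted good paths $\sigma^{jN}\omega$ one step at a time and chain $i$ fellow-travelings, which accumulates an error of order $i\delta$ --- exactly the accumulation you said you would avoid by using $\tau_\omega$ as a fixed intermediary; the intermediary is simply not available at the scale where $[A_i,D_i]$ lives. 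Moreover, the statement you want in this step is false with the claimed uniformity: $[A_i,D_i]$ begins within $2\ep N$ of the orbit point $O_i$, and Theorem \ref{tracking} gives only \emph{sublinear}, not bounded, tracking, so $d_\T(O_i,\tau_\omega)$ is unbounded in general; for the same reason $O_n$ is only sublinearly close to $\tau_\omega$, so $[O,O_n]$ need not be uniformly close to $\tau_\omega$ near its terminal endpoint. Absorbing these deviations ``into $\ep nN$'' produces a bound of linear order in $n$, whereas the lemma --- and its use immediately afterwards, where $[A_i,D_i]$ must fellow-travel the axis $l_n$ with constants good enough to feed into Proposition \ref{minsky model} and Proposition \ref{gluing} --- requires a constant independent of both $i$ and $n$.

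The missing idea is a hyperbolicity input that converts the chain of overlapping local fellow-travelings ($[C_{i-1},D_{i-1}]$ with $[A_i,B_i]$, each along an $\eta$-thick segment of length $\ge h$) into a single global, uniform statement. Teichmüller space is not Gromov hyperbolic, so the ``thin-triangle behavior'' you invoke for the triangle $O$, $O_n$, point of $l_n$ has no direct meaning, especially since the sides of that triangle are not known to be thick. The paper's proof instead projects everything to the curve graph by the shortest-curve projection $\Upsilon:\T\rightarrow\mc{C}$: thick, long Teichmüller segments project to parametrized uniform quasi-geodesics (Masur--Minsky, Hamenst\"adt), the concatenation of the projected segments is a uniform local quasi-geodesic, and the local-to-global stability of quasi-geodesics in hyperbolic spaces makes each $\Upsilon[A_i,D_i]$ uniformly close to $[\Upsilon(O),\Upsilon(O_n)]$; this is then pulled back to $\T$ using the contraction property of thick Teichmüller geodesics (Minsky, Hamenst\"adt) and Rafi's fellow-traveling criterion. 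Some mechanism of this kind is unavoidable here: neither property (3) nor sublinear tracking can, by themselves, produce the uniform constant the lemma asserts.
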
 

\begin{proof}
Let $\mc{C}$ be the curve graph of $\Sigma$. Consider the shortest curve projection $\Upsilon:\T\rightarrow\mc{C}$. By Masur-Minsky \cite{MM99} we have the following: The curve graph $\mc{C}$ is hyperbolic and the projection is uniformly coarsely Lipschitz and sends Teichmüller geodesics to {\em unparametrized} uniform quasi geodesics. In particular, by stability of quasi geodesics, $\Upsilon[A_i,D_i]$ is uniformly Hausdorff close to the geodesic segment $[\Upsilon(A_i),\Upsilon(C_i)]$. The same holds true for $\Upsilon[O,O_n]$ and $[\Upsilon(O),\Upsilon(O_n)]$. 

Since the composition of $\Upsilon$ with a parametrized, $\eta$-thick and sufficiently long Teichmüller geodesic is a uniform {\em parametrized} quasi geodesic (see \cite{H10}), we also have the following: If the $\delta$-fellow traveling $h$ between $[C_{i-1},D_{i-1}]$ and $[A_i,B_i]$ is sufficiently long, then the geodesics $[\Upsilon(A_{i-1}),\Upsilon(D_{i-1})]$ and $[\Upsilon(A_i),\Upsilon(D_i)]$ uniformly fellow travel along a segment, terminal for the first and initial for the second, which is as long as we wish.

In particular this implies that, if $h$ is large enough, then the concatenation of the geodesic segments
\[
[\Upsilon(O),\Upsilon(C_0)]\cup[\Upsilon(A_1),\Upsilon(C_1)]\cup\dots\cup[\Upsilon(A_{n-1}),\Upsilon(C_{n-1})]\cup[\Upsilon(A_n),\Upsilon(O_n)]
\]
is a uniform $(1,K)$ local quasi geodesic. By the stability of uniform local quasi geodesics in hyperbolic spaces, we conclude that every segment $[\Upsilon(A_i),\Upsilon(D_i)]$ lies uniformly Hausdorff close to $[\Upsilon(O),\Upsilon(O_n)]$. 

In particular, there are points $P_i,Q_i\in[O,O_n]$ for which the projection is uniformly close to the projections of $[A_i,B_i]$ and $[C_i,D_i]$. As Teichmüller geodesics in the thick part are uniformly contracting (by \cite{M96} and \cite{H10}) it follows that $P_i,Q_i$ are uniformly close to the thick subsegments of $[A_i,B_i]$, $[C_i,D_i]$. Therefore, by \cite{R14}, $[P_i,Q_i]$ uniformly fellow travels $[A_i,D_i]$ provided that the height $h$ is sufficiently large. 
\end{proof}

Observe that, by property (2), the segment $[O,O_n]$ uniformly fellow-travels the axis $l_n$ of the pseudo-Anosov $\omega_{nN}$ along the subsegment $[\ep Nn,(1-\ep)Nn]$. By Lemma \ref{closing}, there is a subsegment $[r,s]\subset [n]$ of size $s-r\ge(1-\ep)n$, obtained by discarding an initial and a terminal subsegment of length proportional to $\ep n$, such that for all $r\le i\le s$ $[A_i,D_i]$ uniformly fellow travels $l_n$ (see Figure \ref{fig:figure4} B). We add to the collection the quasi-fuchsian manifold $Q(C_s,\omega_{nN}B_r)$. Using Proposition \ref{replacement} we see that 
\begin{align}
\vol{Q(C_s,\omega_{nN}B_r)}\le\kappa d_\T(C_s,\omega_{nN} B_r)+\kappa\le\text{\rm const}\cdot \ep nN. \label{ineq2}
\end{align}
In fact, on the one hand, the points $B_r,C_s$ are, respectively, uniformly close to points $l_n(t_r),l_n(t_s)$ so their distance is roughly $t_s-t_r$ and $d_\T(C_s,\omega_{nN}B_r)$ can be bounded by $L(\omega_{nN})-(t_s-t_r)$. On the other hand, combining property (6) and $s-r\ge(1-\ep)n$, their distance, up to an error of $\ep N$, is also given by $d_\T(O_r,O_s)\ge(1-\ep)(s-r)N$. By property (1) we have $L(\omega_{nN})\le(1+\ep)nN$ so $L(\omega_{nN})-(t_s-t_r)\simeq(1+\ep)nN-(1-\ep)^2nN$ whence inequality (2). 

Moreover, by Lemma \ref{integrable} and the fact that $|[n]\setminus[r,s]|\le\ep n$, we have
\begin{align}
\sum_{j\not\in[r,s]}{\vol{Q_{(\sigma^{jN}\omega)_N}}}\le\sum_{j\not\in[r,s]}{CN+C}\le\text{\rm const}\cdot\ep nN. \label{ineq3}
\end{align}
By construction, the family $\{Q(A_i,D_i)\}_{r\le i\le s}\sqcup\{Q(C_s,\omega_{nN}B_r)\}$ satisfies the gluing conditions of Proposition \ref{gluing} provided that $h$ is very large. As a result
\begin{align}
\left|\vol{T_{\omega_{nN}}}-\sum_{i\in[r,s]}{\vol{Q(A_i,D_i)}}-\vol{Q(C_s,\omega_{nN}B_r)}\right|\label{ineq4}\\
\le nV_0+\text{\rm const}\cdot\ep nN \nonumber
\end{align}
where $V_0=V_0(\eta,\xi,h,D_1)$ is as in Proposition \ref{gluing}. 

{\bf Family II}. By property (5) and Lemma \ref{interpolation}, we can find on $\tau_\omega$ a pair of points $x_n\in[\ep nN,2\ep nN]$ and $w_n\in[(1-2\ep)nN,(1-\ep)nN]$ which define a quasi-fuchsian manifold whose volume approximate simultaneously the volume of the mapping torus $T_{\omega_{nN}}$ and the volume of the quasi-fuchsian manifold $Q_{\omega_{nN}}$
\begin{align}
\left|\vol{T_{\omega_{nN}}}-\vol{Q(\tau_\omega(x_n),\tau_\omega(w_n))}\right|\le\text{\rm const}\cdot\ep nN \label{ineq5}
\end{align}
and
\begin{align}
\left|\vol{Q_{\omega_{nN}}}-\vol{Q(\tau_\omega(x_n),\tau_\omega(w_n))}\right|\le\text{\rm const}\cdot\ep nN. \label{ineq6}
\end{align}
Notice that inequalities (5) and (6) hold also in the presence of bad intervals as we only used property (5). We will use them in the general case as well.
 
Putting together the previous estimates (1)-(5) we get
\begin{align*}
\left|\vol{Q(\tau_\omega(x_n),\tau_\omega(w_n))}-\sum_{j\in[n]}{\vol{Q_{(\sigma^{jN}\omega)_N}}}\right|\le\text{\rm const}\cdot\ep nN
\end{align*}
Together with (6) this settles the basic case.

{\bf General case}. We now allow the presence of bad intervals. First, let us observe that the argument of the basic case, being independent of the past, immediately implies that if $I=[i,t]\subset[n]$ is an interval consisting entirely of good indices then we can find along $\tau_{\sigma^{iN}(\omega)}$ a pair of points $\ep|I|N<x<2\ep|I|N$ and $(1-2\ep)|I|N<w<(1-\ep)|I|N$ such that 
\begin{align}
\left|\vol{Q(\tau_{\sigma^{iN}(\omega)}(x),\tau_{\sigma^{iN}(\omega)}(w))}-\sum_{j\in I}{\vol{Q_{(\sigma^{jN}\omega)_N}}}\right|\le\text{\rm const}\cdot\ep |I|N. \label{ineq8}
\end{align}
Inequality (7) means, in words, that we can represent the ergodic sum over a good interval by a quasi-fuchsian manifold whose geodesic lies on the tracking ray of the interval. The idea of the general case is to proceed as in the basic case but with different building blocks.

The presence of bad intervals brings in some issues, whose nature is related to the way the the random walk deviates from the tracking ray, that we have to address. However, no new ingredients are needed, only a more careful choice of the interval subdivision. 

The problem can be summarized as follows: Consider a good interval $I_j$ and the adjacent bad interval $J_j$. Look at the deviation from the tracking ray of $I_j$ introduced by $J_j$. It might happen that the quasi-fuchsian manifold associated to the good interval $I_{j+1}$ is too small compared to the deviation and we are uncertain whether or not to include it in the gluing family. In order to get around the issue, we wait until the first time when the fellow traveling between the tracking rays of $I_j$ and $I_{j+1}$ is restored, discard all the good small intervals in between and replace the quasi-fuchsian manifold associated to $I_j$. So we start by refining the interval subdivision. 

{\bf Refinement of the interval subdivision}. Denote by $i_j<t_j$ the initial and the terminal indices in the $j$-th good interval $I_j=[i_j,t_j]$. We proceed inductively. Start with $I_1=[i_1=0,t_1]$ and $J_1=[t_1+1,i_2-1]$. Consider $I_2=[i_2,t_2]$. We determine a new $i^{\text{\rm new}}_3$ by the following condition
\[
i^{\text{\rm new}}_3:=\min\ga{\text{\rm $i>t_2+\ep(|I_1|+|J_1|)$ and $i$ is good}}.
\]
This requirement restores, by property (3), the fellow traveling between $\omega_{i_1N}\tau_{\sigma^{i_1N}(\omega)}$ and $\omega_{i_2N}\tau_{\sigma^{i_2N}(\omega)}$. That is $\omega_{i_1N}\tau_{\sigma^{i_1N}(\omega)}[(1+\ep)(|I_1|+|J_1|)N,\infty)$ and $\omega_{i_2N}\tau_{\sigma^{i_2N}(\omega)}[\ep(|I_1|+|J_1|)N,\infty)$ are $\delta$-fellow travelers (property (3)). The index $i^{\text{\rm new}}_3$ lies in some good interval $I_{j_3}$. We make the following replacement
\begin{align*}
I_3\longrightarrow &I^{\text{\rm new}}_3:=[i^{\text{\rm new}}_3,t_{j_3}]\\
J_2\longrightarrow &J_2^{\text{\rm new}}:=[t_2+1,i^{\text{\rm new}}_3-1]\\
 &=J^{\text{\rm old}}_2\sqcup I_3\sqcup\dots\sqcup J_{j_3-1}\sqcup[i_{j_3},i^{\text{\rm new}}_3-1].
\end{align*}

By our choice, if $j_3>3$, then the sum of the lengths $|J^{\text{\rm old}}_2|+|I_3|+\dots+|I_{j_3-1}|$ and $i^{\text{\rm new}}_3-i_{j_3}$ are controlled by $\ep(|I_1|+|J_1|)$. The length of $|J_{j_3-1}|$ can be, instead, arbitrarily long. Furthermore $|I^{\text{\rm new}}_3|\le|I_{j_3}|$. Observe that, for the new $J_2$ we have $|J^{\text{\rm new}}_2|=i^{\text{\rm new}}_3-t_2\le\ep(|I_1|+|J_1|)+|J_{j_3-1}|$. We leave untouched all the intervals after $I_{j_3}$, but we shift back the remaining indices $j\rightarrow 3+j-j_3$ for all $j>j_3$. We repeat the process and get inductively the new set of indices 
\[
i^{\text{\rm new}}_r:=\min\ga{\text{\rm $i>t^{\text{\rm new}}_{r-1}+\ep(|I^{\text{\rm new}}_{r-2}|+|J^{\text{\rm new}}_{r-2}|)$ and $i$ is good}}
\]
and intervals
\begin{align*}
I_r\longrightarrow &I^{\text{\rm new}}_r:=[i^{\text{\rm new}}_r,t_{j_r}]\\
J_{r-1}\longrightarrow &J_{r-1}^{\text{\rm new}}:=[t_{r-1}+1,i^{\text{\rm new}}_r-1]
\end{align*}
that satisfy $|J^{\text{\rm new}}_r|\le\ep(|I^{\text{\rm new}}_{r-2}|+|J^{\text{\rm new}}_{r-2}|)+|J_{j_{r+1}-1}|$. We end up with a new subdivision $[n]=I^{\text{\rm new}}_1\sqcup J^{\text{\rm new}}_1\sqcup\dots\sqcup I^{\text{\rm new}}_{k'}\sqcup J^{\text{\rm new}}_{k'}$ that still has the property
\begin{align*}
\sum_{t\le k'}{|J^{\text{\rm new}}_t|}\le\sum_{t\le k'}{\ep(|I^{\text{\rm new}}_{t-2}|+|J^{\text{\rm new}}_{t-2}|)+|J^{\text{\rm old}}_{j_{t+1}-1}|}\le\ep\sum_{t\le k'}{|J^{\text{\rm new}}_{t-2}|}+\ep n+\zeta n.
\end{align*}
Hence $\sum_{t\le k'}{|J^{\text{\rm new}}_t|}\le(\ep n+\zeta n)/(1-\ep)\le 4\ep n$ if $\zeta<\ep<1/2$. In particular the volumes corresponding to the new bad indices still add up to a small amount. In fact, by Lemma \ref{integrable}, we have 
\begin{align}
\sum_{i\in\bigsqcup{J^{\text{\rm new}}_j}}{\vol{Q_{\sigma^{iN}(\omega)_N}}}\le(CN+C)\sum_{i<k'}{|J^{\text{\rm new}}_i|}<\text{\rm const}\cdot\ep nN. \label{ineq9}
\end{align}  
For the sake of simplicity, after the refinement, we return to the previous notation $i_j:=i^{\text{\rm new}}_j$, $t_j:=t_j^{\text{\rm new}}$ and $I_j:=I^{\text{\rm new}}_j$, $J_j:=J^{\text{\rm new}}_j$, but assume the new properties.

{\bf Family III}. The proof can now proceed parallel to the basic case, so we only sketch the arguments. We define a family of quasi-fuchsian manifolds, one for every pair of adjacent intervals $I_j\sqcup J_j$, that can be glued to form a model for $T_{\omega_{nN}}$ that nearly computes its volume.

Proceed inductively. Start with $I_1\sqcup J_1=[0,t_1=|I_1|-1]\sqcup[t_1+1,i_2-1=|I_1|+|J_1|]$. Since $\tau_\omega$ is a good ray, we can find segments $[a_1,b_1]\subset[\ep |I_1|N, 2\ep |I_1|N]$ and $[c_1,d_1]\subset[(1+\ep)(|I_1|+|J_1|)N, (1+2\ep)(|I_1|+|J_1|)N]$ which are $\eta$-thick and have length at least $h$. Now consider $I_j\sqcup J_j$ for $j>1$. As in the basic case, we single out a pair of segments $[a_j,b_j]$, $[c_j,d_j]$ on the tracking ray of $\sigma^{i_jN}(\omega)$ normalized so that it starts at $O_{i_j}$. The first one, $[a_j,b_j]$, is determined by the condition that it is a $\delta$-fellow traveler of $[c_{j-1},d_{j-1}]$ contained in $[\ep(|I_j|+|J_j|)N, 2\ep(|I_j|+|J_j|)N]$ (see Figure \ref{fig:figure4} A). Here we are using in an essential way the properties of the refined interval and property (3) of good rays. The second one, $[c_j,d_j]$, is a $\eta$-thick $h$-long subsegment of $[(1+\ep)(|I_j|+|J_j|)N, (1+2\ep)(|I_j|+|J_j|)N]$. We simplify the notation by introducing
\begin{align*}
&A_j=\omega_{i_jN}\tau_{\sigma^{i_jN}(\omega)}(a_j), &B_j=\omega_{i_jN}\tau_{\sigma^{i_jN}(\omega)}(b_j),\\
&C_j=\omega_{i_jN}\tau_{\sigma^{i_jN}(\omega)}(c_j), &D_i=\omega_{i_jN}\tau_{\sigma^{i_jN}(\omega)}(d_j).
\end{align*}
We associate to $I_j\sqcup J_j$ the manifold $Q(A_j,D_j)$. 

The analogue of Lemma \ref{closing} holds word by word if we replace the old segments with the new ones, that is $[A_i,D_i]$ uniformly fellow travels $[O,O_n]$. 

By property (2), the latter uniformly fellow travels $l_n$, the axis of $\omega_{nN}$, along $\tau_\omega[\ep nN,(1-\ep)nN]$. In particular we can find $0<r<s<n$ such that $[A_r,D_r]$ and $[A_s,D_s]$ are, respectively, the first and the last segments that fellow travel $\tau_\omega[\ep nN,(1-\ep)nN]$ along some subsegments, which is terminal for the first and initial for the second. 

Up to discarding an initial (resp.  terminal) segment of $[A_r,D_r]$ (resp. $[A_s,D_s]$) of length smaller than $\ep |A_rD_r|$ (resp. $\ep |A_sD_s|$) we can assume that $[A_r,D_r]$ (resp. $[A_s,D_s]$) uniformly fellow travels subsegments of $\tau_\omega[\ep nN,(1-\ep)nN]$ and $l_n$ (as in Figure \ref{fig:figure4} B). The volumes of the associated quasi-fuchsian manifolds change at most by $\text{\rm const}\cdot\ep nN$ according to Proposition \ref{replacement}. 

We can also assume, by recurrence, that $[A_r,D_r]$ (resp. $[A_s,D_s]$) contains an initial (resp. terminal) $\eta$-thick subsegment $[A_r,B_r]$ (resp. $[C_s,D_s]$) of length at least $h$. We add the quasi-fuchsian manifold $Q(C_s,\omega_{nN}B_r)$ to the family. As in the basic case we have
\begin{align}
\vol{Q(C_s,\omega_{nN}B_r)}\le\text{\rm const}\cdot\ep nN. \label{ineq11}
\end{align}
Applying Proposition \ref{gluing} to the family $\{Q(A_j,D_j)\}_{j\in[r,s]}$ $\sqcup$ $\{Q(C_s,\omega_{nN}B_r)\}$ we can perform the cut and glue construction and get a manifold diffeomorphic to $T_{\omega_{nN}}$ with volume
\begin{align}
\left|\vol{T_{\omega_{nN}}}-\sum_{i\in[r,s]}{\vol{Q(A_i,D_i)}}-\vol{Q(C_s,\omega_{nN}B_r)}\right|\label{ineq12}\\
\le nV_0+\text{\rm const}\cdot\ep nN. \nonumber
\end{align}

The fellow traveling property of $\bigsqcup_{i<r}[A_i,D_i]$ (resp. $\bigsqcup_{i>s}[A_i,D_i]$) with $\tau_\omega[0,2\ep nN]$ (resp. $[\tau_\omega((1-\ep)nN),O_n]$) implies that $\sum_{i\not\in[r,s]}{d_\T(A_i,D_i)}\le 2\ep nN$ and, by Lemma \ref{replacement},
\begin{align}
\sum_{i\not\in[r,s]}{\vol{Q(A_i,D_i)}}\le\text{\rm const}\cdot\ep nN. \label{ineq10}
\end{align}

We compare now the volume of $Q(A_i,D_i)$ with the ergodic sum over the good interval $I_i$. Since the interval $I_j$ is good, we find on $\tau_{\sigma^{i_jN}(\omega)}$ two points $\ep|I_j|N<x_j<2\ep|I_j|N$ and $(1-2\ep)|I_j|N<w_j<(1-\ep)|I_j|N$ such that inequality (7) holds for $I=I_j$. Before going on, let us relax the notation, by introducing $X_j=\omega_{i_jN}\tau_{\sigma^{i_jN}(\omega)}(x_j)$ and $W_j=\omega_{i_jN}\tau_{\sigma^{i_jN}(\omega)}(w_j)$. We have
\begin{align}
\left|\vol{Q(X_j,W_j)}-\sum_{i\in I_j}{\vol{Q_{\sigma^{iN}(\omega)_N}}}\right|\le\text{\rm const}\cdot\ep|I_j|. \label{ineq13}
\end{align}

By Proposition \ref{replacement}, we have
\begin{align*}
\left|\vol{Q(A_j,D_j)}-\vol{Q(X_j,W_j)}\right|\le\kappa(d_\T(A_j,X_j)+d_\T(D_j,W_j))+\kappa.
\end{align*}
As $a_j,x_j\in[0,\ep(|I_j|+|J_j|)N]$ and $d_j,w_j\in[(1-\ep)|I_j|N,(1+2\ep)(|I_j|+|J_j|)N]$ we can continue the chain of inequalities with
\begin{align*}
\le\text{\rm const}\cdot\ep|I_j|N+\text{\rm const}\cdot|J_j|N.
\end{align*}
Adding all the contributions we get
\begin{align}
\left|\sum_{j\le k}{\vol{Q(A_j,D_j)}}-\sum_{j\le k}{\vol{Q(X_j,W_j)}}\right| \label{ineq14}\\
\le N\sum_{j\le k}{\text{\rm const}\cdot\ep|I_j|+\text{\rm const}\cdot|J_j|}\le\text{\rm const}\cdot\ep nN+\text{\rm const}\cdot\zeta nN.\nonumber
\end{align}
Putting together inequalities (10)-(13) and (5), (6) concludes the proof.
\end{proof}   

Theorem \ref{qf tracking} is now reduced to an application of the ergodic theorem which says that for $\mb{P}$-almost every $\omega$ the following limit exists finite 
\[
\lim_{n\rightarrow\infty}\frac{1}{nN}\sum_{j<n}{\text{vol}\pa{Q_{(\sigma^{jN}\omega)_N}}}=v_N.
\]
If $N$ and $\Omega_{\ep,\zeta,N}$ are as in Proposition \ref{approximation} then
\[
\limsup_{j\rightarrow\infty}{\frac{\vol{Q_{\omega_{jN}}}}{jN}}-\liminf_{j\rightarrow\infty}{\frac{\vol{Q_{\omega_{jN}}}}{jN}}\le\ep
\]
on $\Omega_{\ep,\zeta,N}$ which has measure at least $1-\zeta$. Applying Lemma \ref{real numbers} we get
\[
\limsup_{n\rightarrow\infty}{\frac{\vol{Q_{\omega_n}}}{n}}-\liminf_{n\rightarrow\infty}{\frac{\vol{Q_{\omega_n}}}{n}}\le\ep.
\]
This concludes the proof of Theorem \ref{qf tracking}.

\section{Some questions}
\label{questions}

We conclude with four questions.

\begin{qn}
What about other geometric invariants (e.g. diameter, systole, Laplace spectrum)? That is, given a geometric invariant $G(\bullet)$ of hyperbolic 3-manifolds, is there a function $f_G:\mb{N}\rightarrow\mb{R}$ such that $G(X_{\omega_n})/f_G(n)$ approaches a positive constant for almost every $\omega$? More specifically:
\begin{itemize}
\item{Does $\frac{1}{n}\cdot\text{\rm diam}(X_{\omega_n})$ converge?}
\item{Does $\log(n)^2\cdot\text{\rm systole}(T_{\omega_n})$ converge (see also \cite{ST})?}
\item{Does $n^2\cdot\lambda_1(X_{\omega_n})$ converge (see also \cite{BGH16}, \cite{HV})?}
\end{itemize}
\end{qn}

The strategy pursued in this article can be applied to the study of the asymptotic for other geometric invariants. The control one needs consists essentially of two parts: 
\begin{enumerate}[(i)]
\item{A comparison theorem for the geometric invariant computed for the negatively curved models and the underlying hyperbolic metric.}
\item{An understanding of the behaviour of the function that computes the geometric invariant for quasi-fuchsian manifolds.}
\end{enumerate}

In the next question we consider a different notion of randomness: Observe that, up to conjugacy, there is only a finite number of mapping classes with translation length at most $L$. Hence, for every fixed $L$, it makes sense to sample at random and uniformly a conjugacy class $\omega_L$ of a mapping class with translation length at most $L$. 

\begin{qn}
Does $\text{\rm vol}(T_{\omega_L})/L$ converge almost surely for $L\rightarrow\infty$? 
\end{qn}

A companion question for quasi-fuchsian manifolds is the following:

\begin{qn}
For which Teichmüller rays $\tau:[0,\infty)\rightarrow\T$ does the mean value $\text{\rm vol}(Q(\tau(0),\tau(t)))/t$ converge for $t\rightarrow\infty$?
\end{qn}

For pseudo-Anosov axes $l_\phi$ the limit exists and is equal to $\vol{T_\phi}/L(\phi)$ \cite{KoMc}, \cite{BB}. Theorem \ref{qf tracking} implies that it exists for every point and almost every ray with respect to exit measures of random walks. What about the Lebesgue measure on $\mc{PML}$ which is singular with respect to the exit measures \cite{GMT12}? 

The last question concerns the relation between hyperbolic volume and Teichmüller data: We know that $\text{\rm vol}(T_f)/d_{\text{\rm WP}}(f)\in[1/k(g),k(g)]$ (see \cite{Br03}, \cite{Br}, \cite{S13}). If we consider random walks, both numerator and denominator have a linear asymptotic $\text{\rm vol}(T_{\omega_n})/n\rightarrow v>0$ and $d_{\text{\rm WP}}(\omega_n)/n\rightarrow d>0$.

\begin{qn}
How does $v/d$ distribute? Does the ratio $v/d$ display an extremal behaviour?  
\end{qn}

One can ask the same for the Teichmüller translation lengths.

\medskip
\noindent
Mathematisches Institut der Universit\"at Bonn\\
Endenicher Allee 60, 53115 Bonn\\
Germany

\medskip
\noindent
e-mail: gviaggi@math.uni-bonn.de

\end{document}